\numberwithin{figure}{section}
\numberwithin{table}{section}
\numberwithin{equation}{section}
\newenvironment{abstr}[1]{ \vspace{.05in}\footnotesize
       \parindent .2in
         {\upshape\bfseries #1. }\ignorespaces}{\par\vspace{.1in}}
\newenvironment{Abstract}{\begin{abstr}{Abstract}}{\end{abstr}}
\newenvironment{keywords}{\begin{abstr}{Key words}}{\end{abstr}}
\newenvironment{AMS}{\begin{abstr}{AMS subject
      classifications}}{\end{abstr}}
\newtheorem{theorem}{Theorem}[section]
\newtheorem{lemma}[theorem]{Lemma}
\theoremstyle{definition}
\DeclareMathOperator{\diag}{diag}
\DeclareMathOperator{\diam}{diam}
\DeclareMathOperator{\Div}{div}
\DeclareMathOperator{\curl}{curl}
\DeclareMathOperator{\eff}{eff}
\DeclareMathOperator{\Id}{Id}
\DeclareMathOperator{\HMM}{HMM}
\DeclareMathOperator{\imp}{imp}
\renewcommand{\Re}{\operatorname{Re}}
\renewcommand{\Im}{\operatorname{Im}}
\newcommand{\norm}[2]{\left\| #2 \right\|_{#1}}
\DeclareMathOperator*{\wto}{\rightharpoonup}
\newcommand{\gz}{\mathbb{Z}}       
\newcommand{\rz}{\mathbb{R}}       
\newcommand{\cz}{\mathbb{C}}       
\newcommand\Ve{\mathbf{e}}
\newcommand\CT{\mathcal{T}}
\newcommand{\indicator}{\mathds{1}}
\let\olddelta=\delta
\renewcommand{\delta}{\eta}
\newcommand{\e}{\operatorname{e}}
\newcommand{\R}{\mathbb{R}}
\newcommand{\C}{\mathbb{C}}
\newcommand{\Q}{Q}
\newcommand{\Eeff}{\hat{E}}
\newcommand{\Heff}{\hat{H}}
\renewcommand{\d}{{\, \operatorname{d}}}
\newcommand{\weakto}{\rightharpoonup}
\newcommand{\eps}{\varepsilon}
\DeclarePairedDelimiterX\set[1]\lbrace\rbrace{\def\given{\colon}#1}                            
\DeclarePairedDelimiterX\abs[1]\lvert\rvert{
\ifblank{#1}{\:\cdot\:}{#1}}
\DeclarePairedDelimiterX\normm[1]\lVert\rVert{
\ifblank{#1}{\:\cdot\:}{#1}}
\begin{document}

\title{Mathematical analysis of transmission properties of
  electromagnetic meta-materials%
  \thanks{This work was supported by the Deutsche
    Forschungsgemeinschaft (DFG) in the project ``Wellenausbreitung in
    periodischen Strukturen und Mechanismen negativer Brechung''
    (grant OH 98/6-1 and SCHW 639/6-1).}  } \author{M.
  Ohlberger\footnotemark[2] \and B. Schweizer\footnotemark[3] \and
  M. Urban\footnotemark[3] \and B. Verf\"urth\footnotemark[2]}
\date{September 21, 2018}
\maketitle

\renewcommand{\thefootnote}{\fnsymbol{footnote}}
\footnotetext[2]{Angewandte Mathematik: Institut f\"ur Analysis und
  Numerik, Westf\"alische Wilhelms-Uni\-ver\-si\-t\"at M\"unster,
  Einsteinstr. 62, DE-48149 M\"unster} \footnotetext[3]{Fakult\"at
  f\"ur Mathematik, TU Dortmund, Vogelspothsweg 87, DE-44227 Dortmund}
\renewcommand{\thefootnote}{\arabic{footnote}}

\begin{Abstract}
  We study time-harmonic Maxwell's equations in meta-materials that
  use either perfect conductors or high-contrast materials.  Based on
  known effective equations for perfectly conducting inclusions, we
  calculate the transmission and reflection coefficients for four
  different geometries.  For high-contrast materials and essentially
  two-dimensional geometries, we analyze parallel electric and
  parallel magnetic fields and discuss their potential to exhibit
  transmission through a sample of meta-material. For a numerical
  study, one often needs a method that is adapted to heterogeneous
  media; we consider here a Heterogeneous Multiscale Method for high
  contrast materials.  The qualitative transmission properties, as
  predicted by the analysis, are confirmed with numerical
  experiments. The numerical results also underline the applicability
  of the multiscale method.
\end{Abstract}

\begin{keywords}
  homogenization, Maxwell's equations, multiscale method, meta-material
\end{keywords}

\begin{AMS}
  35B27, 35Q61, 65N30, 78M40
\end{AMS}

\section{Introduction}
\label{sec:introduction}

\textbf{Motivation.}  We study the transmission and reflection
properties of meta-materials, i.e., of periodic microstructures of a
composite material with two components.  The interest in
meta-materials has immensely grown in the last years as they exhibit
astonishing properties such as band gaps or negative refraction; see
\cite{EP04negphC, PE03lefthanded, CJJP02negrefraction}.  The
propagation of electromagnetic waves in such materials is modelled by
time-harmonic Maxwell's equations for the electric field $E$ and the
magnetic field $H$:
\begin{subequations}
  \begin{empheq}[left=\empheqlbrace]{alignat=2}
    \curl E &= \phantom{-} i \omega \mu_0\mu H\,, \\
    \curl H &= - i \omega \varepsilon_0\varepsilon E\,.
  \end{empheq}
\end{subequations}
We use the standard formulation with $\mu_0, \eps_0 > 0$ the
permeability and permittivity of vacuum, $\mu$ and $\eps$ the
corresponding relative parameters, and $\omega>0$ the imposed frequency.
While most materials are non-magnetic, i.e., $\mu=1$, the electric
permittivity $\varepsilon$ covers a wide range.  In this paper, we
study meta-materials consisting of air (i.e., $\varepsilon=1$) and a
(metal) microstructure $\Sigma_\eta$.  The microstructure is
assumed to be an $\eta$-periodic repetition of scaled copies of some
geometry $\Sigma$. In the present study, we investigate in detail four
different geometries: $\Sigma$ can be a metal cylinder (in two
rotations), a metal plate, or the complement of an air cylinder;
see Fig. \ref{fig:Two-examples-of-perfect-conductors-for-analysis}
and \eqref {eq:Analysis-Definition-of-metal-cylinder}--\eqref
{eq:Analysis-Definition-of-air-cylinder} for a detailed definition.
For the electric permittivity in the microstructure $\Sigma_\eta$, we
consider two different cases: perfect conductors that are formally
obtained by setting $\varepsilon=\infty$, and high-contrast materials
with $\varepsilon=\varepsilon_1\eta^{-2}$, where $\eps_1\in \C$ is
some complex number with $\Im(\eps_1) > 0$.  In both cases, our study
is based on the effective equations for the electric and magnetic
field in the limit $\eta \to 0$.

The numerical simulation of electromagnetic wave propagation in such
meta-materials is very challenging because of the rapid variations in
the electric permittivity.  Standard methods require the resolution of
the $\eta$-scale, which often becomes infeasible even with today's
computational resources.  Instead, we resort to
homogenization and multiscale methods to extract macroscopic features
and the behaviour of the solution.  The effective equations obtained
by homogenization can serve as a good motivation and
starting point in this process.

\smallskip \textbf{Literature.}  Effective equations for Maxwell's
equations in meta-materials are obtained in several different settings
with various backgrounds in mind: Dielectric bulk inclusions with
high-contrast media \cite{BBF09hommaxwell, BBF15hommaxwell,
  CC15hommaxwell} can explain the effect of artificial magnetism and
lead to unusual effective permeabilities $\mu$, while long wires
\cite{BB12homwire} lead to unusual effective permittivities.  A
combination of both structures is used to obtain a negative-index
meta-material in \cite{LS15negindex}.  Topological changes in the
material in the limit $\eta\to 0$, such as found in split rings
\cite{BS10splitring}, also incite unusual effective behaviour.
Perfect conductors were recently studied as well: split rings in
\cite{LS16pecrings} and different geometries in \cite{SU17hommaxwell}.
Finally, we briefly mention that the Helmholtz equation---as the
two-dimensional reduction of Maxwell's equations---is often studied as
the first example for unusual effective properties: high-contrast
inclusions in \cite{BF04homhelmholtz} or high-contrast layer materials
in \cite{BS13plasmonwaves}, just to name a few.  An overview on this
vast topic is provided in \cite{Schw17metamaterial}.

Concerning the numerical treatment, we focus on the Heterogeneous
Multiscale Method (HMM) \cite{EE03hmm, EE05hmm}.  For the HMM, first
analytical results concerning the approximation properties for
elliptic problems have been derived in \cite{Abd05hmmanalysis,
  EMZ05hmmanalysis, Ohl05HMM} and then extended to other problems,
such as time-harmonic Maxwell's equations \cite{HOV15maxwellHMM} and
the Helmholtz equation and Maxwell's equations with high-contrast
\cite{OV16hmmhelmholtz, Ver17hmmmaxwell}.  Another related work is the
multiscale asymptotic expansion for Maxwell's equations
\cite{CZAL10multiscalemaxwell}.
For further recent contributions to HMM approximations for
Maxwell's equations we refer to \cite{CFS17hmmmaxwell,HS16hmmmaxwelltime}.
Sparse tensor product finite elements for multiscale Maxwell-type equations
are analyzed in \cite{CH17femmaxwellmultiscale} and an adaptive generalized
multiscale finite element method is studied in \cite{CL19gmsfem}.

\smallskip \textbf{Main results.} We perform an analytical and a
numerical study of transmission properties of meta-materials that
contain either perfect conductors or high-contrast materials.  The
main results are the following:

1.) Using the effective equations of \cite{SU17hommaxwell}, we
calculate the reflection and transmission coefficients for four
microscopic geometries $\Sigma$.  Few geometrical parameters are
sufficient to fully describe the effective coefficients.  We show that
only certain polarizations can lead to transmission.

2.) For the two geometries that are invariant in the $\e_3$-direction,
we study the limit behaviour of the electromagnetic fields for
high-contrast media. When the electric field is parallel to $\e_3$,
all fields vanish in the limit. Instead, when the magnetic field is
parallel to $\e_3$, transmission cannot be excluded due to resonances.

3.) Extensive numerical experiments for high-contrast media confirm
the analytical results.  The numerical experiments underline the
applicability of the Heterogeneous Multiscale Method to these
challenging setting.

Some further remarks on 2.) are in order. The results are related to
homogenization results of \cite{BBF15hommaxwell, CC15hommaxwell}, but
we study more general geometries, since the highly conducting material
can be connected. Furthermore, the results are related to
\cite{BF04homhelmholtz, BS13plasmonwaves}, where connected structures
are investigated, but in a two-dimensional formulation. We treat here
properties of the three-dimensional solutions.  We emphasize that the
transmission properties of a high-contrast medium cannot be captured
in the framework of perfect conductors, since the latter excludes
resonances on the scale of the periodicity (except if three different
length-scales are considered as in \cite {LS16pecrings}).

\smallskip \textbf{Organization of the paper.}  The paper is organized
as follows: In Section \ref{sec:problem}, we detail the underlying
problem formulations and revisit existing effective equations.  In
Section \ref{sec:analysis}, we compute the transmission coefficients
for perfect conductors and derive effective equations for
high-contrast media.  In Section \ref{sec:numerics}, we briefly
introduce the Heterogeneous Multiscale Method. Finally, in Section
\ref{subsec:experiment} we present several numerical experiments
concerning the transmission properties of our geometries for
high-contrast materials.

\section{Problem formulation and effective equations}
\label{sec:problem}

This section contains the precise formulation of the problem,
including the description of the four microscopic geometries.  We
summarize the relevant known homogenization results and apply them to
the cases of interest.

\subsection{Geometry and material parameters}
\label{sec:geometry}
We study the time-harmonic Maxwell equations with linear material
laws. The geometry is periodic with period $\eta>0$; solutions depend
on this parameter and are therefore indexed with $\eta$. On a domain
$G \subset \R^3$, the problem is to find $E^\eta, H^\eta :G \to \C^3$,
such that
\begin{subequations}\label{eq:time-harmonic-Maxwell-eq}
  \begin{empheq}[left=\empheqlbrace]{alignat=2}
    \curl E^\eta& = \phantom{-} i \omega \mu_0 H^\eta \,,
    \label{eq:time-harmonic-Maxwell-eq1}\\
    \curl H^\eta&= - i \omega \varepsilon_0\varepsilon_\eta E^\eta \,,
    \label{eq:time-harmonic-Maxwell-eq2}
  \end{empheq}
\end{subequations}
subject to appropriate boundary conditions.  In the following, we will
give details on the geometry $G$ and on the choice of the material
parameter $\varepsilon_\eta$, the relative permittivity.  Note that
the system allows to eliminate one unknown. Indeed, if we insert
$H^\eta$ from \eqref{eq:time-harmonic-Maxwell-eq1} into
\eqref{eq:time-harmonic-Maxwell-eq2}, we obtain
\begin{equation}
  \curl\curl E^\eta = \omega^2 \mu_0\varepsilon_0\varepsilon_\eta E^\eta\,.
  \label{eq:time-harmonic-Maxwell-eq-E}
\end{equation}
Alternatively, substituting $E^\eta$ from \eqref
{eq:time-harmonic-Maxwell-eq2} into \eqref
{eq:time-harmonic-Maxwell-eq1}, we obtain
\begin{equation}
  \curl \varepsilon_\eta^{-1}\curl H^\eta = \omega^2 \mu_0\varepsilon_0
  H^\eta\,.\label{eq:time-harmonic-Maxwell-eq-H}
\end{equation}

\begin{figure}
  \centering
  \includegraphics[width=.7\linewidth]{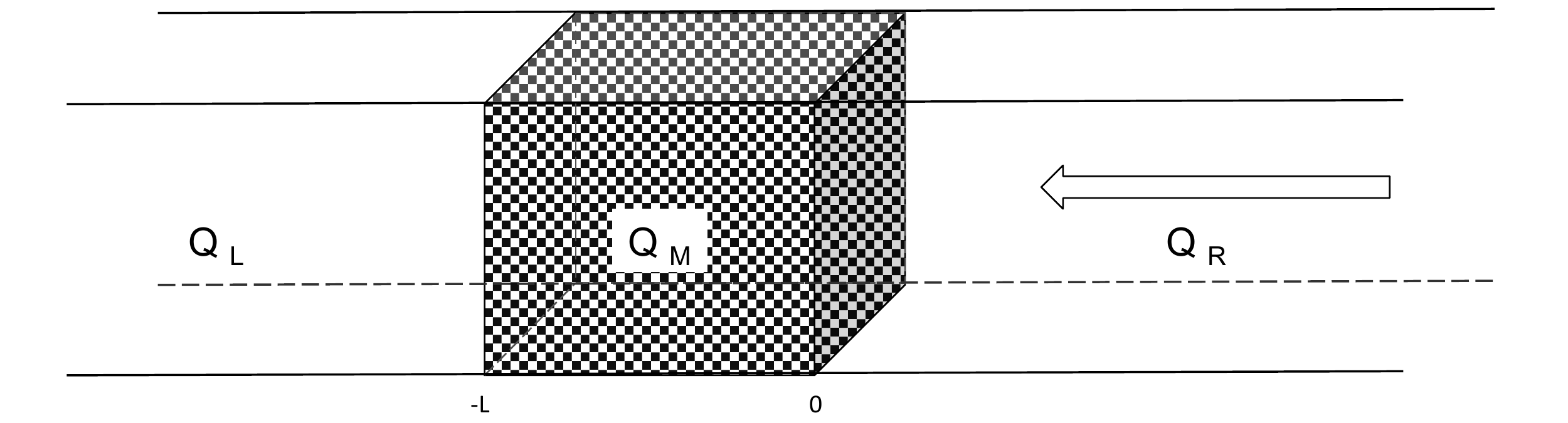}
  \caption{Waveguide domain $G$ with periodic scatterer $\Sigma_{\delta}$
  contained in the middle part $\Q_M$ and incident wave from the right.
  \label{fig:waveguide}}
\end{figure}

\textbf{Geometry.}  As sketched in Fig. \ref{fig:waveguide}, with positive numbers $\ell_2, \ell_3 > 0$, the
unbounded macroscopic domain is the waveguide domain
\begin{equation}\label{eq:Analysis-Definition-of-G}
  G \coloneqq \set[\big]{x =(x_1, x_2, x_3) \in \R^3 \given x_2 \in (-
    \ell_2, \ell_2) \text{ and } x_3 \in (- \ell_3, \ell_3)}\,.
\end{equation}
With another positive number $L > 0$, the domain is divided into three
parts (left, middle, right) as
\begin{equation*}
  \Q_L \coloneqq \set[\big]{x \in G \given x_1 \leq -L}\, ,\ \Q_M
  \coloneqq \set[\big]{x \in G \given x_1 \in (-L, 0)}\, , \  \text{
    and } \ \Q_R \coloneqq \set{x \in G \given x_1 \geq 0}\, .
\end{equation*}
The scatterer $\Sigma_{\delta}$ is contained in the middle part
$\Q_M$.  For the boundary conditions, we consider an incident wave
from the right that travels along the $x_1$-axis to the left.  We
restrict ourselves here to normal incidence.  For the analysis, we
impose periodic boundary conditions on the lateral boundaries of the
domain $G$.  For the numerics, we will modify the boundary conditions
slightly: we truncate $G$ in $x_1$-direction (to obtain a bounded
domain) and consider impedance boundary conditions (with the incident
wave as data) on the whole boundary of $G$.

\begin{figure}
  \centering
\begin{subfigure}[t]{0.3\textwidth}
  \begin{tikzpicture}[scale=2.5]
  \coordinate (A) at (-0.5, -0.5, 0.5);
  \coordinate (B) at (0.5,-0.5,0.5);
  \coordinate (C) at (0.5,0.5,0.5);
  \coordinate (D) at (-0.5, 0.5, 0.5);

  \coordinate (E) at (-0.5, 0.5, -0.5);
  \coordinate (F) at (0.5, 0.5, -0.5);
  \coordinate (G) at (0.5, -0.5, -0.5);
  \coordinate (H) at (-0.5, -0.5, -0.5);

  \fill[gray!20!white, opacity=.5] (A) -- (B) -- (C) -- (D) -- cycle;
  \fill[gray!20!white, opacity=.5] (E) -- (F) -- (G) --(H) -- cycle;
  \fill[gray!20!white, opacity=.5] (D) -- (E) -- (H) -- (A)-- cycle;
  \fill[gray!20!white, opacity=.5] (B) -- (C) -- (F) -- (G) -- cycle;

  \fill[gray!75!white] (-.25, -.5) -- (-.25,.5)
                                arc(180:0:0.25cm and 0.125cm) -- (0.25, -0.5)
                                (-0.25, -.5) arc(180:360:0.25cm and 0.125cm);
  \filldraw[gray!75!white] (-.25,-.5) -- (0.25,-.5);

  \draw[] (A) -- (B) -- (C) -- (D) --cycle (E) -- (D) (F) -- (C) (G) -- (B);
  \draw[] (E) -- (F) -- (G) ;
  \draw[densely dashed] (E) -- (H) (H) -- (G) (H) -- (A);

  \draw[] (-.25,0.5) arc (180:-180:0.25cm and 0.125cm);
  \draw[dashed] (-.25,-.5) arc (180:360:0.25cm and 0.125cm);
  \draw[dashed] (-.25,-.5) arc (180:0:0.25cm and 0.125cm);
  \draw[dashed] (-0.25, -.5) -- (-0.25, .5);
  \draw[dashed] (.25,-.5) -- (.25,.5);

  \draw[->] ( -.8, -.7, .5)--( -.4, -.7, .5 );
  \draw[->] (-.8, -.7, .5) -- (-.8, -.3, .5);
  \draw[->] (-.8, -.7, .5) -- (-.8, -.7, .2);

  \node[] at (0,0,0) {$\Sigma_1$};
\node[] at (-.4, -.8, .5){$\e_1$};
  \node[] at (-.68, -.3, .5){$\e_3$};
  \node[] at (-.8, -.8, .-.1){$\e_2$};

\end{tikzpicture}
  \caption{ }
  \label{fig:the metal cylinder}
\end{subfigure}
\hfill
\begin{subfigure}[t]{0.3\textwidth}
  \begin{tikzpicture}[scale=2.5]
  \coordinate (A) at (-0.5, -0.5, 0.5);
  \coordinate (B) at (0.5,-0.5,0.5);
  \coordinate (C) at (0.5,0.5,0.5);
  \coordinate (D) at (-0.5, 0.5, 0.5);

  \coordinate (E) at (-0.5, 0.5, -0.5);
  \coordinate (F) at (0.5, 0.5, -0.5);
  \coordinate (G) at (0.5, -0.5, -0.5);
  \coordinate (H) at (-0.5, -0.5, -0.5);

  \coordinate (M1) at (-0.5, -0.5, 0.15);
  \coordinate (M2) at (-0.5, -0.5, -0.15);
  \coordinate (M3) at (-0.5, 0.5, -0.15);
  \coordinate (M4) at (-0.5, 0.5, 0.15);
  \coordinate (M5) at (0.5, 0.5, 0.15);
  \coordinate (M6) at (0.5,0.5,-0.15);
  \coordinate (M7) at (0.5, -0.5, -0.15);
  \coordinate (M8) at (0.5, -0.5, 0.15);

  \fill[gray!20!white, opacity=.5] (A) -- (B) -- (C) -- (D) -- cycle;
  \fill[gray!20!white, opacity=.5] (E) -- (F) -- (G) --(H) -- cycle;
  \fill[gray!20!white, opacity=.5] (D) -- (E) -- (H) -- (A)-- cycle;
  \fill[gray!20!white, opacity=.5] (B) -- (C) -- (F) -- (G) -- cycle;

  \fill[gray!75!white, opacity=.9] (M4) -- (M5) -- (M6) -- (M3) -- cycle;
  \fill[gray!75!white, opacity=.9] (M1) -- (M4) -- (M5) -- (M8) -- cycle;
  \fill[gray!75!white, opacity=.9] (M2) -- (M3) -- (M6) -- (M7) -- cycle;
  \fill[gray!75!white, opacity=.9] (M1) -- (M2) -- (M7) -- (M8) -- cycle;

  \draw[] (A) -- (B) -- (C) -- (D) --cycle (E) -- (D) (F) -- (C) (G) -- (B);
  \draw[] (E) -- (F) -- (G) ;
  \draw[densely dashed] (E) -- (H) (H) -- (G) (H) -- (A);

  \draw[dashed] (M1) -- (M4) (M2) -- (M3) ;
  \draw[] (M4) -- (M5) -- (M6) -- (M3) -- cycle;
  \draw[dashed] (M5) -- (M8) (M7) -- (M6);
  \draw[dashed] (M1) -- (M8)  (M7) -- (M2);

  \draw[->] ( -.8, -.7, .5)--( -.4, -.7, .5 );
  \draw[->] (-.8, -.7, .5) -- (-.8, -.3, .5);
  \draw[->] (-.8, -.7, .5) -- (-.8, -.7, .2);

  \node[] at (0, 0, 0) {$\Sigma_3$};
  \node[] at (-.4, -.8, .5){$\e_1$};
  \node[] at (-.68, -.3, .5){$\e_3$};
  \node[] at (-.8, -.8, .-.1){$\e_2$};
\end{tikzpicture}
  \caption{ }
  \label{fig:the metal plate}
\end{subfigure}
\hfill\begin{subfigure}[t]{0.3\textwidth}
    \begin{tikzpicture}[scale=2.5]
  \coordinate (A) at (-0.5, -0.5, 0.5);
  \coordinate (B) at (0.5,-0.5,0.5);
  \coordinate (C) at (0.5,0.5,0.5);
  \coordinate (D) at (-0.5, 0.5, 0.5);

  \coordinate (E) at (-0.5, 0.5, -0.5);
  \coordinate (F) at (0.5, 0.5, -0.5);
  \coordinate (G) at (0.5, -0.5, -0.5);
  \coordinate (H) at (-0.5, -0.5, -0.5);

  \fill[gray!75!white, opacity=.9] (A) -- (B) -- (C) -- (D) -- cycle;
  \fill[gray!75!white, opacity=.9] (E) -- (F) -- (G) --(H) -- cycle;
  \fill[gray!75!white, opacity=.9] (D) -- (E) -- (H) -- (A)-- cycle;
  \fill[gray!75!white, opacity=.9] (B) -- (C) -- (F) -- (G) -- cycle;

  \fill[gray!20!white] (-0.5, 0.25, 0) -- (.5,.25,0)
                                arc(90:-90:0.125cm and 0.25cm) --
                                (0.5, -.25,0) --
                                (-0.5, -0.25,0) arc(-90:-270:0.125cm and 0.25cm);

  \draw[] (A) -- (B) -- (C) -- (D) --cycle (E) -- (D) (F) -- (C) (G) -- (B);
  \draw[] (E) -- (F) -- (G) ;
  \draw[densely dashed] (E) -- (H) (H) -- (G) (H) -- (A);

  \draw[] (0.375, 0,0) arc (-180:180:0.125cm and 0.25cm);
  \draw[dashed] (-.625,0, 0) arc (180:360:0.125cm and 0.25cm);
  \draw[dashed] (-.625,0,0) arc (180:0:0.125cm and 0.25cm);
  \draw[dashed] (-0.5, 0.25,0) -- (0.5, 0.25, 0);
  \draw[dashed] (-0.5,-.25,0) -- (.5,-.25,0);

  \draw[->] ( -.8, -.7, .5)--( -.4, -.7, .5 );
  \draw[->] (-.8, -.7, .5) -- (-.8, -.3, .5);
  \draw[->] (-.8, -.7, .5) -- (-.8, -.7, .2);

  \node[] at (0, -.4, .4) {$\Sigma_4$};
  \node[] at (-.4, -.8, .5){$\e_1$};
  \node[] at (-.68, -.3, .5){$\e_3$};
  \node[] at (-.8, -.8, .-.1){$\e_2$};
\end{tikzpicture}
  \caption{ }
  \label{fig:the air cylinder}
\end{subfigure}
\caption{The cube shows the periodicity cell $Y$. The microstructures
  $\Sigma_1$, $\Sigma_3$, and $\Sigma_4$ are shown in dark grey.  (a)
  The metal cylinder $\Sigma_1$. (b) The metal plate $\Sigma_3$. (c)
  The metal part $\Sigma_4$ is the complement of a cylinder.
\label{fig:Two-examples-of-perfect-conductors-for-analysis}}
\end{figure}
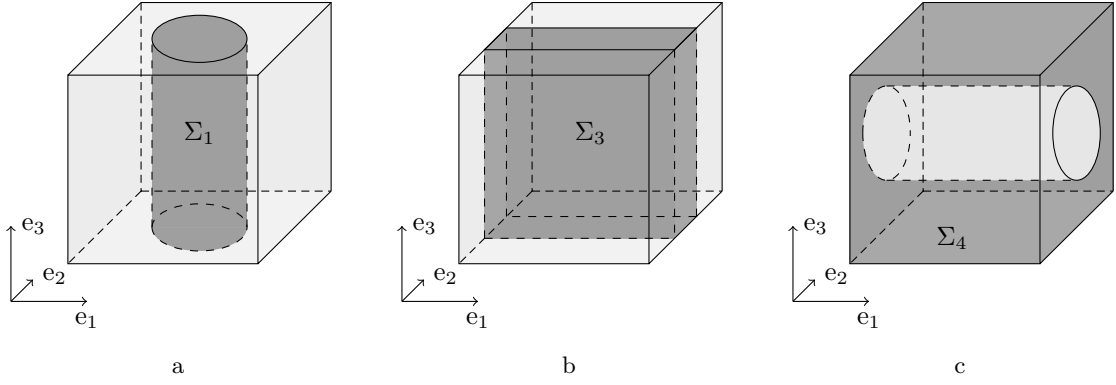

The scatterer $\Sigma_\eta$ is given as an $\eta$-periodic structure.
We use the periodicity cell $Y \coloneqq [-\frac 12 ,\frac 12]^3$ and
introduce the set $I_\eta$ of all vectors such that a scaled and shifted
copy of $Y$ is contained in $Q_M$, $I_\eta \coloneqq \{j\in
\gz^3|\eta(j+Y)\subset Q_M\}$.  A set $\Sigma\subset Y$ specifies the
meta-material, which is defined as
\begin{equation}
  \Sigma_\eta \coloneqq \bigcup_{j\in
    I_\eta}\ \eta\, (j+\Sigma)\,.\label{eq:Sigma-eta}
\end{equation}

For the microscopic structure $\Sigma$ we consider the following four
examples.  The metal cylinder (see Fig.~\ref{fig:the metal
  cylinder}) is defined for $r \in (0, 1/2)$ as
\begin{equation}\label{eq:Analysis-Definition-of-metal-cylinder}
  \Sigma_{1} \coloneqq \set[\big]{y = (y_1, y_2, y_3) \in Y \given y_1^2 + y_2^2 < r^2}\, .
\end{equation}
The set $\Sigma_2$ is obtained by a rotation which aligns the cylinder
with the $\e_1$-axis,
\begin{equation}\label{eq:Analysis-Definition-of-metal-cylinder-rotated}
  \Sigma_2 \coloneqq \set[\big]{y = (y_1, y_2, y_3) \in Y \given
    y_2^2+y_3^2 < r^2}\, .
\end{equation}
To define the metal plate (see Fig.~\ref{fig:the metal plate}), we
fix $r \in (0, 1/2)$ and set
\begin{equation}\label{eq:Analysis-Definition-of-rotated-metal-plate}
  \Sigma_3 \coloneqq \set[\big]{y = (y_1, y_2, y_3)
    \in Y \given y_2 \in (- r, r)}\, .
\end{equation}
The fourth geometry is obtained by removing an \enquote{air cylinder} from
the unit cube (see Fig.~\ref{fig:the air cylinder}); for
$r \in (0, 1/2)$ we set
\begin{equation}\label{eq:Analysis-Definition-of-air-cylinder}
  \Sigma_4 \coloneqq Y \setminus  \set[\big]{y = (y_1, y_2, y_2) \in Y
    \given y_2^2 + y_3^2 < r^2}\,.
\end{equation}

\textbf{Material parameters.}  We recall that all materials are
non-magnetic, the relative magnetic permeability is $\mu \equiv
1$. Outside the central region, there is no scatterer; we hence set
$\varepsilon_\eta=1$ in $Q_L$ and $Q_R$.  The middle part $Q_M$
contains $\Sigma_\eta$. We set $\varepsilon_\eta = 1$ in $Q_M\setminus
\Sigma_\eta$.  It remains to specify the electric permittivity
$\varepsilon_\eta$ in $\Sigma_\eta$. We consider two different
settings.

\smallskip (PC) In the case of \emph{perfect conductors}, we set,
loosely speaking, $\varepsilon_\eta = +\infty$ in $\Sigma_\eta$.  More
precisely, we require that $E^\eta$ and $H^\eta$ satisfy
\eqref{eq:time-harmonic-Maxwell-eq} in $G\setminus
\overline{\Sigma}_\eta$ and $E^\eta=H^\eta=0$ in
$\Sigma_\eta$. Boundary conditions are induced on $\partial
\Sigma_\eta$: The magnetic field $H^\eta$ has a vanishing normal
component and the electric field $E^\eta$ has vanishing tangential
components on $\partial \Sigma_\eta$.

\smallskip (HC) In the case of \emph{high-contrast media}, we define
the permittivity as
\begin{equation}\label{eq:Analysis-high-contrast-permittivity}
  \varepsilon_\eta(x) \coloneqq
  \begin{dcases}
    \frac{\varepsilon_1}{\eta^2}& \text{ if } x \in \Sigma_\eta\, ,\\
    1 & \text{ if } x \in G \setminus \Sigma_\eta\, ,
  \end{dcases}
\end{equation}
where $\varepsilon_1 \in \mathbb{C}$ with $\Re(\varepsilon_1) >0$,
$\Im(\varepsilon_1)>0$.  Physically speaking, this means that the
scatterer $Q_M$ consists of periodically disposed metal inclusions
$\Sigma_\delta$ embedded in vacuum.  The scaling with $\eta^2$ means
that the optical thickness of the inclusions remains constant; see
\cite{BBF15hommaxwell}.

\smallskip In both settings and throughout this paper, we consider
sequences of solutions $(E^\eta, H^\eta)_{\eta}$ to
\eqref{eq:time-harmonic-Maxwell-eq} which are bounded in $L^2(G;
\mathbb{C}^3)$,
\begin{equation}
  \label{eq:Analysis-high-contrast-boundedness-assumption-on-fields}
  \sup_{\eta > 0} \int_G \abs{E^\eta}^2 + \abs{H^\eta}^2 < \infty\, .
\end{equation}

Let us remark that the specific geometry of the microstructures $\Sigma_1,
\Sigma_2$, and $\Sigma_4$
is not important; the cylinders could as well be cuboids.

\subsection{Effective equations}
\label{sec:homogenization}
Homogenization theory allows to consider the limit $\eta\to 0$. One
identifies limiting fields $\hat{E}$ and $\hat{H}$ (the latter does
not coincide with the weak limit of $H^\eta$) and limiting equations
for these fields.  Using the tool of two-scale convergence, such
results have been obtained for perfect conductors as well as for
high-contrast materials.  We briefly summarize the main findings here;
analysis and numerics below are built upon these results.

\smallskip \textbf{Perfect conductors (PC).}  The homogenization analysis for
this case has been performed in \cite{SU17hommaxwell}.  Since the
parameters of vacuum are used outside the scatterer, the original
Maxwell equations describe the limiting fields in $Q_L$ and $Q_R$.  In
the meta-material $Q_M$, however, different equations hold. There
holds $E^\eta\wto\Eeff$ and $H^\eta \wto \hat{\mu}\Heff$ in $L^2(G)$
and the fields $\Eeff$ and $\Heff$ solve
\begin{subequations}\label{eq:Analysis-the-general-effective-equations}
  \begin{empheq}[left=\empheqlbrace]{alignat=2}
    \curl \Eeff & = \phantom{-} i \omega \mu_0 \hat{\mu} \Heff &&
    \text{ in } G\,
    , \label{eq:Analysis-the-general-effective-equations-1}\\
    \curl \Heff & = - i \omega \varepsilon_0 \hat{\varepsilon} \Eeff
    && \text{ in } G \setminus Q_M \,
    , \label{eq:Analysis-the-general-effective-equations-2}\\
    (\curl \Heff)_k & = -i \omega \varepsilon_0 (\hat{\varepsilon }
    \Eeff)_k && \text{ in } G\,  , \text{ for every } k \in
    \mathcal{N}_{\Sigma}\,
    , \label{eq:Analysis-the-general-effective-equations-3}\\
    \Eeff_k & = 0 && \text{ in } Q_M \, , \text{ for every } k \in
    \mathcal{L}_{\Sigma}\,
    , \label{eq:Analysis-the-general-effective-equations-4}\\
    \Heff_k & = 0 && \text{ in } Q_M \, , \text{ for every } k \in
    \mathcal{N}_{Y \setminus \overline{\Sigma}}\, .
    \label{eq:Analysis-the-general-effective-equations-5}
  \end{empheq}
\end{subequations}
The effective coefficients $\hat{\mu}$ and $\hat{\varepsilon}$ are
determined by cell-problems.  For the cell-problems, details on the
index sets, and the derivation of system
\eqref{eq:Analysis-the-general-effective-equations}, we refer to
\cite{SU17hommaxwell}.  The index sets $\mathcal{N}_{\Sigma}$,
$\mathcal{L}_{\Sigma}$, and $\mathcal{N}_{Y \setminus
  \overline{\Sigma}}$ are subsets of $\set{1,2,3}$ and can be
determined easily from topological properties of $\Sigma$. Loosely
speaking: An index $k$ is in the set $\mathcal{L}_{\Sigma}$, if there
is a curve (loop) that runs in $\Sigma$ and connects opposite faces of
$Y$ in direction $\e_k$. An index $k$ is in $\mathcal{N}_{\Sigma}$, if
there is no loop of that kind.  We collect the index sets
$\mathcal{N}_{\Sigma}$, $\mathcal{L}_{\Sigma}$, and $\mathcal{N}_{Y
  \setminus \overline{\Sigma}}$ for the geometries $\Sigma_1$ to
$\Sigma_4$ in Table \ref{tab:analysissummary}.

\begin{table}
  \caption{Index sets $\mathcal{N}_{\Sigma}$, $\mathcal{L}_{\Sigma}$,
    and $\mathcal{N}_{Y \setminus \overline{\Sigma}}$ for
    microstructures $\Sigma_1$ to $\Sigma_4$ of
    \eqref{eq:Analysis-Definition-of-metal-cylinder}--\eqref
    {eq:Analysis-Definition-of-air-cylinder}.}
  \label{tab:analysissummary}
\centering
\begin{tabular}{@{}ccccc@{}}
\toprule
geometry&metal cylinder $\Sigma_1$&metal cylinder $\Sigma_2$&metal plate $\Sigma_3$&air cylinder $\Sigma_4$\\
\midrule
$\mathcal{N}_{\Sigma}$& $\{1, 2\}$ & $\{2,3\}$ & $\{2\}$ & $\emptyset$ \\
\midrule
$\mathcal{L}_{\Sigma}$& $\{3\}$ & $\{1\}$ & $\{1,3\}$ & $\{1,2,3\}$ \\
\midrule
$\mathcal{N}_{Y \setminus \overline{\Sigma}}$& $\emptyset$ & $\emptyset$ & $\{2\}$ & $\{2,3\}$ \\
\bottomrule
\end{tabular}
\end{table}

We will specify equations \eqref
{eq:Analysis-the-general-effective-equations-3}--\eqref
{eq:Analysis-the-general-effective-equations-5} for the four chosen
geometries in Section \ref
{sec:Calculation-of-transmission-and-reflection-coefficient}.  With
the effective equations for the perfect conductors at hand, one can
ask for the transmission and reflection coefficients of the
meta-material.  This is the goal of our analysis in Section
\ref{sec:Calculation-of-transmission-and-reflection-coefficient}.

\smallskip \textbf{high-contrast media (HC).} Homogenization results
for high-contrast media are essentially restricted to the case of
non-connected metal parts, i.e., to geometries that are obtained by
$\Sigma$ which is compactly embedded in $Y$ (it does not touch the
boundary of the cube); see, e.g., \cite {BBF09hommaxwell,
  BS10splitring, BBF15hommaxwell}. The few exceptions are mentioned
below.

For such geometries, the limit equations have again the form of
Maxwell's equations,
\begin{subequations}\label{eq:Analysis-high-contrast-effective-equations}
  \begin{empheq}[left=\empheqlbrace]{alignat=2}
    \curl \Eeff & = \phantom{-} i \omega \mu_0 \hat{\mu} \Heff &&
    \text{ in } G\,
    , \label{eq:Analysis-high-contrast-effective-equations-1}\\
    \curl \Heff & = - i \omega \varepsilon_0 \hat{\varepsilon} \Eeff
    && \text{ in } G .
  \end{empheq}
\end{subequations}
In $Q_L\cup Q_R$, the effective fields coincide with the weak limits
of the original fields, and the effective relative coefficients are
unit tensors. In the meta-material $Q_M$, however, the high-contrast
in the definition of the permittivity $\varepsilon_\eta$ in
\eqref{eq:Analysis-high-contrast-permittivity} leads to non-trivial
limit equations. The effective material parameters $\hat{\varepsilon}$
and $\hat{\mu}$ are obtained via cell problems and they can take
values that are not to be expected from the choice of the material
parameters in the $\eta$-problem.

As discussed in Section \ref{sec:geometry}, time-harmonic Maxwell's
equations can equivalently be written as a single second order PDE for
the $H$-field or the $E$-field.  For the $H$-field we obtain
\begin{align}
  \label{eq:Analysis-high-contrast-effective-eq-H}
  \curl \widehat{\varepsilon^{-1}}\curl \Heff=\omega^2\varepsilon_0\mu_0\hat{\mu} \Heff
  \qquad \text{in} \quad G\,.
\end{align}
Again, the effective material parameters $\widehat{\varepsilon^{-1}}$
and $\hat{\mu}$ are defined via solutions of cell problems and we
refer to \cite{CC15hommaxwell, Ver17hmmmaxwell} for details.  We
remark that the equivalence of the two formulations
\eqref{eq:Analysis-high-contrast-effective-equations} and
\eqref{eq:Analysis-high-contrast-effective-eq-H} has been shown in
\cite{Ver17hmmmaxwell}.  In particular, the effective permeability
$\hat{\mu}$ agrees between both formulations and we have the relation
$\widehat{\varepsilon^{-1}}=(\hat{\varepsilon})^{-1}$.

The effective equations
\eqref{eq:Analysis-high-contrast-effective-equations} or
\eqref{eq:Analysis-high-contrast-effective-eq-H} mean that, in the
limit $\eta\to 0$, the meta-material $Q_M$ with high-contrast
permittivity $\varepsilon_\eta$ behaves like a homogeneous material
with permittivity $\hat{\varepsilon}$ and permeability $\hat{\mu}$.
The occurrence of a permeability $\hat{\mu}$ in the effective
equations is striking and this effect is known as artificial
magnetism; see \cite{BF04homhelmholtz}.  Moreover, $\hat{\mu}$ depends
on the frequency $\omega$ and it can have a negative real part for
certain frequencies.  Negative values of the permeability are caused
by (Mie) resonances in the inclusions $\Sigma$ and are studied in
detail in \cite{BBF15hommaxwell, Ver17hmmmaxwell}.

As mentioned, a crucial assumption for the homogenization analysis in
\cite{BBF15hommaxwell, CC15hommaxwell} is that $\Sigma$ is compactly
contained in the unit cube.  For the four geometries
$\Sigma_1$ to $\Sigma_4$, this assumption is clearly not met; we
therefore ask whether certain components of the effective fields
$\hat{E}$ and $\hat{H}$ vanish in this case as in the case of perfect
conductors.  This motivates our analysis in Section \ref
{sec:high-contrast-media} as well as the numerical experiments in
Section \ref{subsec:experiment}.

Regarding known results on non-compactly contained inclusions we
mention the thin wires in \cite {BF-thinwires} and \cite
{LS15negindex}, and the dimensionally reduced analysis of the metal
plates $\Sigma_3$ in \cite {BS13plasmonwaves}.

\section{Analysis of the microscopic geometries $\Sigma_1$ to
  $\Sigma_4$}
\label{sec:analysis}

In Section \ref
{sec:Calculation-of-transmission-and-reflection-coefficient}, we treat
the case of perfect conductors and compute the transmission
coefficients from the effective equations \eqref
{eq:Analysis-the-general-effective-equations}.  In Section \ref
{sec:high-contrast-media}, we treat the case of high-contrast media
and discuss the possibility of nontrivial transmission coefficients.

\subsection{Transmission and reflection coefficients for perfect
  conductors}
\label{sec:Calculation-of-transmission-and-reflection-coefficient}

We compute the transmission and reflection coefficients for four
different geometries: metal cylinders, metal plate, and air cylinder.
We consider the wave guide $G=Q_L\cup \bar Q_M\cup Q_R$ of Section
\ref{sec:geometry} and impose periodic boundary conditions on the
lateral boundary of $G$.  We recall that the four microscopic
structures $\Sigma_1$ to $\Sigma_4$ are defined in
\eqref{eq:Analysis-Definition-of-metal-cylinder}--\eqref
{eq:Analysis-Definition-of-air-cylinder}.  Based on the effective
equations \eqref {eq:Analysis-the-general-effective-equations} for the
perfect conductors, we compute the transmission and reflection
coefficients for these geometries.

\smallskip \textbf{Results for perfect conductors.}  Before we discuss the
examples in detail, we present an overview of the results.  The
propagation of the electromagnetic wave in vacuum is described by the
time-harmonic Maxwell equations
\begin{subequations}\label{eq:Analysis-time-harmonic-Maxwell-eq-in-QR}
  \begin{empheq}[left=\empheqlbrace]{alignat=2}
    \curl \hat{E}& = \phantom{-} i \omega \mu_0 \hat{H}\quad && \text{ in }
    Q_L \cup Q_R\, , \label{eq:Analysis-first-time-harmonic-Maxwell-eq-in-QR} \\
    \curl \hat{H} &= - i \omega \varepsilon_0 \hat{E} && \text{ in }
    Q_L \cup Q_R\, .\label{eq:Analysis-second-time-harmonic-Maxwell-eq-in-QR}
  \end{empheq}
\end{subequations}
For the electromagnetic fields, we use the time-convention $\e^{- i
  \omega t}$.
From~\eqref{eq:Analysis-time-harmonic-Maxwell-eq-in-QR} we deduce that
both fields are divergence-free in $Q_L \cup Q_R$.  We shall assume
that the electric field $\hat{E} \colon G \to \mathbb{C}^3$ in $Q_R$
is the superposition of a normalized incoming wave with normal
incidence and a reflected wave:
\begin{equation}\label{eq:Analysis-general-ansatz-for-electric-field-in-QR}
  \hat{E}(x) \coloneqq \big(\e^{-i k_0 x_1} + R \e^{i k_0 x_1}
  \big) \e_k \, ,
\end{equation}
for $x=(x_1, x_2, x_3) \in Q_R$ and $k \in \set{2,3}$. Here, $R \in \mathbb{C}$
is the reflection coefficient and $k_0 = \omega \sqrt{\varepsilon_0
  \mu_0}$. Note that the electric field $\hat{E}$
in~\eqref{eq:Analysis-general-ansatz-for-electric-field-in-QR} travels
along the $x_1$-axis from right to left.

Due to~\eqref{eq:Analysis-first-time-harmonic-Maxwell-eq-in-QR}, the effective
magnetic field $\hat{H} \colon G \to \mathbb{C}^3$ is given by
\begin{equation}\label{eq:Analysis-general-ansatz-for-magnetic-field-in-QR}
  \hat{H}(x) =  (-1)^{l}\frac{k_0}{\omega \mu_0} \big(\e^{-i k_0 x_1}
  - R \e^{i k_0
    x_1}  \big) \e_l\, ,
\end{equation}
where $l = 2$ if $k = 3$ and $l = 3$ if
$k=2$ and $x \in Q_R$. Equation~\eqref{eq:Analysis-second-time-harmonic-Maxwell-eq-in-QR}
is satisfied in $Q_R$ by our choice of $k_0$.

On the other hand, for the transmitted electromagnetic wave in the
left domain $Q_L$, we make the ansatz
\begin{equation}\label{eq:Analysis-general-ansatz-for-E-and-H-in-QL}
  \hat{E}(x) = T \e^{-i k_0 (x_1+L)}\e_k \quad \text{ and } \quad
  \hat{H}(x) = (-1)^l \frac{k_0}{\omega \mu_0} T \e^{-i k_0 (x_1+L)} \e_l \,,
\end{equation}
where $T\in \C$ is the transmission coefficient. We recall that $L >0$
is the width of the meta-material $Q_M$ and $\{ x_1 = -L\}$ is the
interface between left and middle domain. Since the meta-material in
$Q_M$ can lead to reflections, the transmission coefficient $T \in
\mathbb{C}$ does not necessarily satisfy $\abs{T} = 1$; by energy
conservation there always holds $\abs{T} = 1- \abs{R}$.

Our results are collected in Table \ref {table:res-PC}. The table
lists transmission coefficients for the four geometries in the case
that the incoming magnetic field $H$ is parallel to $\e_3$.

\begin{table}[h]
  \centering
  \begin{tabular}{c  c}
    \toprule
    microstructure $\Sigma$ &
                              transmission coefficient $T$
    \\
    \midrule
    metal cylinder $\Sigma_1$ &
                                $T =4  p_1\sqrt{\alpha\gamma}
                                \Big[(\alpha + \gamma)(1-p_1^2) + 2
                                \sqrt{\alpha \gamma} (1+ p_1^2)\Big]^{-1}$ \\[0.3cm]
    metal cylinder $\Sigma_2$
                            & $T = 4 p_2 \sqrt{\gamma}
                              \Big[(1+ \gamma)(1- p_2^2) + 2 \sqrt{\gamma}(1+p_2^2) \Big]^{-1}$  \\[0.3cm]
    metal plate $\Sigma_3$ &  $ T = 4p_0 \alpha
                             \Big[(1+\alpha^2)(1-p_0^2) + 2 \alpha (1+
                             p_0^2)\Big]^{-1}$ \\[0.3cm]
    air cylinder $\Sigma_4$ & $T =0$ \\
    \bottomrule
  \end{tabular}
  \caption{Overview of the transmission coefficients $T$ when $H$ is parallel
    to $\e_3$. We see, in particular, that $T$ is vanishing for the structure
    $\Sigma_4$, but it is nonzero for the other micro-structures. The constant
    $\gamma \in \mathbb{C}$ depends on
    the microstructure and on solutions to cell problems, and is defined in the
    subsequent sections, $\alpha \coloneqq \abs{ Y \setminus \Sigma}$
    is the volume fraction of air, $L > 0$ is the width of the meta-material $Q_M$. We use
    $k_0 = \omega \sqrt{\varepsilon_0 \mu_0}$ and the numbers $p_0
    \coloneqq \e^{i k_0 L}$, $p_1 \coloneqq p_0 \e^{ i \sqrt{\alpha
        \gamma} L}$, and $p_2 \coloneqq p_0 \e^{ i \sqrt{\gamma}L}$.
    \label{table:res-PC}}
\end{table}

In the remainder of this section we compute the transmission
coefficient $T$ and the reflection coefficient $R$ for the four
microscopic geometries and verify, in particular, the formulas of
Table \ref {table:res-PC}. Moreover, the effective fields in the
meta-material $Q_M$ are determined.

\subsubsection{The metal cylinder $\Sigma_1$}
\label{sec:metal-cylinder}

The metal cylinder $\Sigma_1$ has a high symmetry, which allows to
compute the effective permeability $\hat{\mu}$. To do so, we define
the projection $\pi \colon Y \to \R^2$ onto the first two components,
i.e., $\pi(y_1, y_2, y_3) \coloneqq (y_1, y_2)$.  Moreover, we set
$Y^2 \coloneqq \pi(Y)$ and $\Sigma^2_1 \coloneqq \pi(\Sigma_1)$.

Choose $l \in \set{1,2}$ and denote by $H^l \in L^2(Y; \mathbb{C}^3)$
the distributional periodic solution of
\begin{subequations}\label{eq:Analysis-Cell-problem-for-H-field}
  \begin{empheq}[left=\empheqlbrace]{alignat=2}
    \curl H^l & = 0&& \text{ in } Y \setminus \overline{\Sigma}_1\, ,\\
    \Div H^l & = 0 && \text{ in } Y\, ,\\
    H^l & = 0 && \text{ in } \Sigma_1\, ,
  \end{empheq}
\text{with}
  \begin{equation}
    \oint H^l = \e_l\, .
  \end{equation}
\end{subequations}
The normalization of the last equation is defined in \cite
{SU17hommaxwell}; loosely speaking, the left hand collects values of
line integrals of $H^l$, where the lines are curves in $Y\setminus
\Sigma$ and connect opposite faces of $Y$.  Problem \eqref
{eq:Analysis-Cell-problem-for-H-field} is uniquely solvable by
\cite[Lemma 3.5]{SU17hommaxwell}. Given the field $H^l = (H^l_1,
H^l_2, H^l_3)$ we define the field $h^l \colon Y^2 \to \mathbb{C}^2$
as
\begin{equation}\label{eq:Anaylsis-Definition-of-2-dimensional-H-field}
  h^l(y_1, y_2) \coloneqq \int_0^1 (-H^l_2, H^l_1)(y_1, y_2, y_3) \d y_3\, .
\end{equation}

\begin{lemma}\label{lem:}
  Let $H^l \in L^2(Y; \mathbb{C}^3)$ be the solution of
  \eqref{eq:Analysis-Cell-problem-for-H-field}. Then $h^l \in L^2(Y^2;
  \mathbb{C}^2)$ of \eqref
  {eq:Anaylsis-Definition-of-2-dimensional-H-field} is a
  distributional periodic solution to the two-dimensional problem
  \begin{subequations}
    \begin{empheq}[left=\empheqlbrace]{alignat=2}
      \Div h^l & = 0 && \text{ in } Y^2 \setminus \Sigma_1^2\, ,\\
      \nabla^{\bot} \cdot h^l & = 0 && \text{ in } Y^2\, , \\
      h^l & = 0 && \text{ in } \Sigma_1^2\, .
    \end{empheq}
  \end{subequations}
  Moreover, there exists a potential $\psi \in H_{\sharp}^1(Y^2; \C)$
  such that $h^l = \nabla \psi - \olddelta_{2l}\e_1 +
  \olddelta_{1l}\e_2$.
\end{lemma}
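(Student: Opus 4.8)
The plan is to push the three defining properties of $H^l$ through to $h^l$ by elementary manipulations, exploiting that the cylinder $\Sigma_1$ is invariant in the $y_3$-direction, $\Sigma_1 = \Sigma_1^2\times(-\tfrac12,\tfrac12)$, and that $H^l$ is $Y$-periodic. Throughout write $\bar H^l_j(y_1,y_2):=\int_0^1 H^l_j(y_1,y_2,y_3)\,\d y_3$, so that $h^l=(-\bar H^l_2,\bar H^l_1)$ and, by Fubini and Cauchy--Schwarz, $h^l\in L^2(Y^2;\C^2)$.

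First I would verify $\Div h^l=0$ weakly on $Y^2\setminus\overline{\Sigma_1^2}$: for $\varphi\in C_c^\infty(Y^2\setminus\overline{\Sigma_1^2})$, regarded as a $y_3$-independent function on $Y$, one rewrites $\int_{Y^2}h^l\cdot\nabla\varphi=\int_Y H^l\cdot\curl(\varphi\,\e_3)$ (using $\curl(\varphi\,\e_3)=(\partial_2\varphi,-\partial_1\varphi,0)$), which by the definition of the distributional curl of the periodic $L^2$-field $H^l$ equals $\langle\curl H^l,\varphi\,\e_3\rangle$; this vanishes since $\supp(\varphi\,\e_3)\subset Y\setminus\overline{\Sigma}_1$, where $\curl H^l=0$. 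Next, $\nabla^\bot\cdot h^l=0$ on all of $Y^2$: for any periodic $\varphi\in C_\sharp^\infty(Y^2)$, viewed again as $y_3$-independent, $\int_{Y^2}h^l\cdot\nabla^\bot\varphi=\int_Y H^l\cdot\nabla\varphi=-\langle\Div H^l,\varphi\rangle=0$, the contribution $\int_0^1\partial_3 H^l_3\,\d y_3$ vanishing by $y_3$-periodicity of $H^l$. Finally, $h^l=0$ on $\Sigma_1^2$ is immediate from $H^l=0$ a.e.\ on $\Sigma_1=\Sigma_1^2\times(-\tfrac12,\tfrac12)$, which forces $\bar H^l_1=\bar H^l_2=0$ a.e.\ on $\Sigma_1^2$.

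To obtain the potential, I would invoke the Hodge/Helmholtz decomposition on the torus: since $h^l\in L^2(Y^2;\C^2)$ satisfies $\nabla^\bot\cdot h^l=0$ distributionally, there exist $\psi\in H_\sharp^1(Y^2;\C)$ and a constant $c\in\C^2$ with $h^l=\nabla\psi+c$ — the constant cannot be omitted because $Y^2$ is a torus — and necessarily $c=\int_{Y^2}h^l$ since $\int_{Y^2}\nabla\psi=0$. By definition of $h^l$, $c=\bigl(-\int_Y H^l_2,\ \int_Y H^l_1\bigr)$; the normalization $\oint H^l=\e_l$ in \eqref{eq:Analysis-Cell-problem-for-H-field}, unwound via \cite{SU17hommaxwell}, gives $\int_Y H^l_k=\olddelta_{kl}$ for $k\in\{1,2\}$, whence $c=-\olddelta_{2l}\e_1+\olddelta_{1l}\e_2$, which is exactly the asserted decomposition.

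The computations of the second paragraph are routine once the identities $(\partial_2\varphi,-\partial_1\varphi,0)=\curl(\varphi\,\e_3)$ and the reduction to $y_3$-independent test functions are spotted. I expect two points to need genuine care. One is the torus Hodge decomposition at $L^2$-regularity — equivalently, that a mean-zero curl-free periodic $L^2$-field is a periodic gradient; this is where the non-trivial topology of $Y^2$ is felt and where the constant $c$ originates. The other is making precise that $\oint H^l=\e_l$ forces $\int_Y H^l_k=\olddelta_{kl}$ in the first two components; this relies on the definition of $\oint$ in \cite{SU17hommaxwell} together with $H^l$ being curl- and divergence-free off $\Sigma_1$, and is the only place where the specific normalization of the cell problem enters the argument.
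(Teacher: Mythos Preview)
The paper's own proof is a single sentence (``straightforward calculation''), so your proposal already supplies far more detail than the paper does. Your verification of the three equations for $h^l$ is correct: the identities $\curl(\varphi\,\e_3)=(\partial_2\varphi,-\partial_1\varphi,0)$ and the reduction to $y_3$-independent periodic test functions do exactly what you say, and the existence of $\psi\in H^1_\sharp(Y^2)$ with $h^l=\nabla\psi+c$ for the constant $c=\int_{Y^2}h^l$ follows cleanly from the torus Hodge decomposition.

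The one step that needs a warning is the identification $c=-\olddelta_{2l}\e_1+\olddelta_{1l}\e_2$. You reduce this to $\int_Y H^l_k=\olddelta_{kl}$ for $k\in\{1,2\}$ and claim it follows from $\oint H^l=\e_l$ ``unwound via \cite{SU17hommaxwell}''. Be careful: the identity $\int_Y H^l_k=\olddelta_{kl}$ for $k,l\in\{1,2\}$ is \emph{precisely} what the very next lemma (the effective-permeability lemma) proves, and that proof uses the present decomposition of $h^l$. If your derivation of $c$ leans on that lemma, the two results become circular. The normalization $\oint$ in \cite{SU17hommaxwell} is a line-integral functional, and line integrals of $H^l$ do not coincide with the volume averages in general (witness $\int_Y H^3_3=|Y\setminus\Sigma_1|\neq 1$). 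So you must extract $c$ directly from the line-integral normalization using the specific cylindrical geometry --- for instance by choosing straight paths in the slabs $\{|y_1|>r\}$ or $\{|y_2|>r\}$, averaging in $y_3$, and combining with $\Div H^l=0$ (constancy of fluxes through cross-sections) --- rather than by quoting $(\mu_{\eff})_{kl}$. This is presumably the content hidden behind the paper's ``straightforward'', and it is the only place where your outline is genuinely incomplete.
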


\begin{proof}
  The proof consists of a straightforward calculation.
\end{proof}

The decomposition of $h^l$ allows to determine the effective
permeability $\hat{\mu}$, which, by \cite{SU17hommaxwell}, is given as
\begin{equation}\label{eq:Analysis-effective-permeability}
  \hat{\mu}(x) \coloneqq \mu_{\eff} \indicator_{Q_M}(x) + \Id \indicator_{G \setminus
  \overline{Q_M}}(x)\, ,
\end{equation}
where
\begin{equation}
  \label{eq:Analysis-definition-of-the-effective-permeability}
  (\mu_{\eff})_{kl} \coloneqq \int_Y H^l \cdot \e_k\, .
\end{equation}

\begin{lemma}[Effective permeability for the metal cylinder]
  For the microstructure $\Sigma = \Sigma_1$ the
  permeability $\mu_{\eff}$ is given by
  \begin{equation}\label{eq:Analysis-metal-cylinder-effective-permeability}
    \mu_{\eff} =
     \diag \big( 1, 1, \abs{Y \setminus \Sigma_1} \big) \, .
  \end{equation}
\end{lemma}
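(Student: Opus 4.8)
The goal is to compute $(\mu_{\eff})_{kl} = \int_Y H^l \cdot \e_k$ for the metal cylinder $\Sigma_1$, and show the matrix is $\diag(1,1,\abs{Y\setminus\Sigma_1})$. The plan is to exploit the translation invariance of $\Sigma_1$ in the $y_3$-direction and the two-dimensional reduction provided by the previous lemma. First I would observe that $\Sigma_1 = \Sigma_1^2 \times [-\tfrac12,\tfrac12]$ is invariant under shifts in $y_3$, so the cell problem \eqref{eq:Analysis-Cell-problem-for-H-field} has a solution independent of $y_3$; by uniqueness (\cite[Lemma 3.5]{SU17hommaxwell}), $H^l$ itself is independent of $y_3$. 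Consequently $\int_0^1 H^l(y_1,y_2,y_3)\d y_3 = H^l(y_1,y_2,\cdot)$ and the field $h^l$ from \eqref{eq:Anaylsis-Definition-of-2-dimensional-H-field} is just $(-H_2^l, H_1^l)$.

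\textbf{The third row and column.} For the entry $(\mu_{\eff})_{33}$ I would take $l=3$. Since $3 \in \mathcal{L}_{\Sigma_1}$ (there is a loop through $\Sigma_1$ in the $\e_3$-direction), the normalization $\oint H^3 = \e_3$ is the relevant one, but the structure is translation-invariant in $y_3$: the constant field $H^3 = \e_3 \indicator_{Y\setminus\overline\Sigma_1}$ satisfies $\curl H^3 = 0$ in $Y\setminus\overline\Sigma_1$ (it is locally constant there, with the jump across $\partial\Sigma_1$ being tangential in the sense allowed), $\Div H^3 = 0$ in $Y$ (the normal component across $\partial\Sigma_1$ vanishes since $\e_3$ is tangent to the cylinder's lateral surface), and $H^3 = 0$ in $\Sigma_1$. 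One checks the line-integral normalization gives $\e_3$. By uniqueness this is \emph{the} solution, so $(\mu_{\eff})_{33} = \int_Y (\e_3\indicator_{Y\setminus\overline\Sigma_1})\cdot\e_3 = \abs{Y\setminus\Sigma_1}$, and $(\mu_{\eff})_{k3} = 0$ for $k\in\{1,2\}$ since $H^3$ has no $\e_1,\e_2$ components. For the third row, $(\mu_{\eff})_{3l} = \int_Y H^l_3$ with $l\in\{1,2\}$: here I would argue $H^l_3 \equiv 0$, either directly from the $y_3$-invariant two-dimensional reduction (the reconstructed $H^l$ lives in the $(y_1,y_2)$-plane, $H^l_3 = 0$) or by a symmetry/uniqueness argument reflecting $y_3 \mapsto -y_3$.

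\textbf{The upper $2\times2$ block.} For $k,l\in\{1,2\}$, using the potential decomposition $h^l = \nabla\psi - \olddelta_{2l}\e_1 + \olddelta_{1l}\e_2$ from the previous lemma, I would translate back: $(-H_2^l, H_1^l) = \nabla\psi - \olddelta_{2l}\e_1 + \olddelta_{1l}\e_2$ on $Y^2$. One then computes $(\mu_{\eff})_{kl} = \int_{Y^2} H^l_k\,dy_1\,dy_2$ (using $y_3$-invariance) and rewrites $H^l_k$ in terms of the rotated $h^l$. The claim $(\mu_{\eff})_{kl} = \olddelta_{kl}$ for $k,l\in\{1,2\}$ should follow by an integration-by-parts / orthogonality argument: the gradient part $\nabla\psi$ integrates against the divergence-free, tangential-on-$\partial\Sigma_1^2$ competitor fields to give zero (this is the standard homogenization-corrector orthogonality), leaving only the constant term, whose integral over $Y^2$ contributes $\olddelta_{kl}\abs{Y^2} = \olddelta_{kl}$. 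I would also invoke the symmetry of $\mu_{\eff}$ (known from \cite{SU17hommaxwell}) and the rotational symmetry of the disk $\Sigma_1^2$ to force the off-diagonal entries $(\mu_{\eff})_{12} = (\mu_{\eff})_{21} = 0$ and equality $(\mu_{\eff})_{11} = (\mu_{\eff})_{22}$, reducing the work to a single scalar.

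\textbf{Main obstacle.} The routine part is the $2\times2$ block, where I expect the computation to reduce, via the orthogonality of $\nabla\psi$ to divergence-free fields vanishing on $\Sigma_1^2$, to $(\mu_{\eff})_{kl} = \olddelta_{kl}$; this is essentially the observation that a simply-connected obstacle in the plane does not alter the effective permeability in the in-plane directions (no in-plane loops through $\Sigma_1$, i.e., $1,2\in\mathcal{N}_{\Sigma_1}$). The genuinely delicate step is making the translation-invariance/uniqueness argument rigorous at the level of \emph{distributional} periodic solutions and the line-integral normalization $\oint H^l = \e_l$ — i.e., verifying that the explicit $y_3$-independent candidate for $H^3$ indeed satisfies the normalization in the sense of \cite{SU17hommaxwell}, and that $H^1, H^2$ have vanishing third component. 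Once translation invariance in $y_3$ is secured, everything collapses to the two-dimensional picture and the result follows.
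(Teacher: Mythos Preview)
Your approach is essentially the paper's: explicit construction of $H^3 = \indicator_{Y\setminus\Sigma_1}\e_3$ for the third row/column, and the two-dimensional potential decomposition $h^l = \nabla\psi + \text{const}$ from the preceding lemma for the upper $2\times 2$ block. Two simplifications are worth noting. First, you do not need to argue that $H^l$ is $y_3$-independent: the paper simply uses Fubini together with the \emph{definition} $h^l = \int_0^1(-H^l_2,H^l_1)\,\d y_3$, so that $\int_Y H^l_k = \pm\int_{Y^2} h^l_j\,\d y'$ automatically, with no appeal to uniqueness under $y_3$-translations. Second, the mechanism that kills the $\nabla\psi$ contribution is not an ``orthogonality to divergence-free competitor fields'' but simply periodicity of $\psi$: $\int_{Y^2}\partial_j\psi = 0$ by the fundamental theorem of calculus. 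This makes the rotational-symmetry and matrix-symmetry arguments you propose redundant, since the direct computation already gives $(\mu_{\eff})_{12}=(\mu_{\eff})_{21}=0$ and $(\mu_{\eff})_{11}=(\mu_{\eff})_{22}=1$.

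One place where your extra care is justified: the entries $(\mu_{\eff})_{3l}=\int_Y H^l_3$ for $l\in\{1,2\}$ genuinely require knowing something about $H^l_3$, not $H^3$. The paper handles this rather tersely; your reflection argument $y_3\mapsto -y_3$ (or the observation that the $y_3$-independent two-dimensional ansatz with $H^l_3=0$ already solves the cell problem, hence equals $H^l$ by uniqueness) is the right way to close this.
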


\begin{proof}
  To shorten the notation, we write $y^{\prime} \coloneqq (y_1, y_2)
  \in Y^2$. Applying Fubini's theorem and using the decomposition of
  $h^1$, we find that
  \begin{equation*}
    (\mu_{\eff})_{11} = \int_Y H^1 \cdot \e_1 = \int_{Y^2} h^1_2(y^{\prime}) \d y^{\prime}
    = \int_{Y^2} \partial_2 \psi (y^{\prime})  \d y^{\prime}  + \abs{Y^2} = 1\,,
  \end{equation*}
  where, in the last equality, we exploited that $\psi$ is
  $Y^2$-periodic and that $\abs{Y^2} = 1$. A similar computation shows
  that ${(\mu_{\eff})}_{22} = 1$.

  To compute ${(\mu_{\eff})}_{12}$, we note that $h^1_1(y^{\prime})
  = \partial_1 \psi(y^{\prime}) $.  Applying Fubini's theorem, we find
  \begin{equation*}
    {(\mu_{\eff})}_{12} = \int_Y H^1 \cdot \e_2
    = - \int_{Y^2} h^1_1(y^{\prime}) \d y^{\prime}
    = -\int_{Y^2} \partial_1 \psi(y^{\prime}) \d y^{\prime} = 0\,.
  \end{equation*}
  As $h^2_2(y^{\prime}) = \partial_2 \psi(y^{\prime})$, we can proceed
  as before and find ${(\mu_{\eff})}_{21} = 0$.

  One readily checks that $H^3(y ) \coloneqq \mathds{1}_{Y \setminus
    \Sigma_1}(y) \e_3$ is the solution of the cell problem
  \eqref{eq:Analysis-Cell-problem-for-H-field} with $\oint H^3 =
  \e_3$. The missing entries of the effective permeability matrix
  $\mu_{\eff}$ can now be computed using the formula for $H^3$ and the
  definition of $\mu_{\eff}$ in
  \eqref{eq:Analysis-definition-of-the-effective-permeability}.
\end{proof}

Besides $\hat{\mu}$, we also need the effective permittivity
$\hat{\varepsilon}$.
For $l \in \set{1,2,3}$ we denote by $E^l \in L^2(Y ; \C^3)$ the weak
periodic solution to
\begin{subequations}\label{eq:Analysis-cell-problem-for-electric-field}
  \begin{empheq}[left=\empheqlbrace]{alignat=2}
    \label{eq:3}
    \curl E^l & = 0 && \text{ in }  Y \, ,\\
    \Div E^l & = 0 && \text{ in } Y \setminus \overline{\Sigma}_1\, ,\\
    E^l &= 0 && \text{ in } \Sigma_1\, ,
  \end{empheq}
  with
  \begin{equation}
    \int_Y E^l = \e_l\, .
  \end{equation}
\end{subequations}
Problem~\eqref{eq:Analysis-cell-problem-for-electric-field} is
uniquely solvable by~\cite[Lemma 3.1]{SU17hommaxwell}.
Consequently, the solutions
to~\eqref{eq:Analysis-cell-problem-for-electric-field} are real vector
fields. Indeed, for each index $l \in \set{1,2,3}$ the vector field
$\Im (E^l) \colon Y \to \R^3$ is a weak solution
to~\eqref{eq:Analysis-cell-problem-for-electric-field} with $\int_Y
\Im (E^l) = 0$ and hence $\Im(E^l) = 0$ in $Y$.

As in~\cite{SU17hommaxwell} we set
\begin{equation}\label{eq:Analysis-effective-permittivity}
  \hat{\varepsilon} (x) \coloneqq \varepsilon_{\eff}
  \indicator_{Q_M}(x) + \Id \indicator_{G \setminus
    \overline{Q_M}}(x)\, ,
\end{equation}
where
\begin{equation}\label{eq:definition-effective-permittivity}
  (\varepsilon_{\eff})_{kl} \coloneqq \int_Y E^k \cdot E^l\, .
\end{equation}
\begin{lemma}[Effective permittivity for the metal cylinder]
  For the microstructure $\Sigma = \Sigma_1$, the  permittivity
  $\varepsilon_{\eff}$ is given by
  \begin{equation}\label{eq:Analysis-metal-cylinder-effective-permittivity}
    \varepsilon_{\eff} = \diag(\gamma, \gamma, 0)
\, ,
\end{equation}
where  $\gamma \coloneqq ({\varepsilon}_{\eff})_{1,1}$.
\end{lemma}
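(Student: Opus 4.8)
The plan is to deduce the structure of $\varepsilon_{\eff}$ from the symmetries of the cylinder $\Sigma_1$ together with the unique solvability of the cell problem \eqref{eq:Analysis-cell-problem-for-electric-field}. Recall that $E^1,E^2$ are real, so $\varepsilon_{\eff}$ is a real symmetric matrix. The key observation is a transformation principle: if $Q$ is a lattice-preserving orthogonal map with $Q\Sigma_1=\Sigma_1$, then $y\mapsto Q\,E^l(Q^{\top}y)$ is again a periodic solution of \eqref{eq:Analysis-cell-problem-for-electric-field} --- it is curl-free in $Y$, divergence-free off $\Sigma_1$, and vanishes on $\Sigma_1$ --- and its mean is $Q\e_l$; by uniqueness it therefore equals $\sum_m Q_{ml}\,E^m$, and a short computation using orthogonality of $Q$ and a change of variables then gives $Q^{\top}\varepsilon_{\eff}Q=\varepsilon_{\eff}$, i.e.\ $Q$ commutes with $\varepsilon_{\eff}$.

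Next I would feed in the symmetries of $\Sigma_1$. The two reflections $\diag(-1,1,1)$ and $\diag(1,-1,1)$ fix $\Sigma_1$, and commuting with both forces every off-diagonal entry of $\varepsilon_{\eff}$ to vanish. The quarter-turn $\rho\colon(y_1,y_2,y_3)\mapsto(y_2,-y_1,y_3)$ about the cylinder axis also fixes $\Sigma_1$, and a diagonal matrix commuting with $\rho$ must have equal first two diagonal entries. Hence $\varepsilon_{\eff}=\diag(a,a,b)$ with $a=(\varepsilon_{\eff})_{11}$ and $b=(\varepsilon_{\eff})_{33}$.

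It remains to show $b=0$, i.e.\ $E^3=0$. Here the relevant fact is $3\in\mathcal{L}_{\Sigma_1}$: the segment $\{(0,0,t):t\in(-\tfrac12,\tfrac12)\}$ lies in $\Sigma_1$ and joins the opposite faces $\{y_3=\pm\tfrac12\}$. Since $\curl E^3=0$ holds in all of $Y$, a solution of mean $\e_3$ would be of the form $E^3=\e_3+\nabla\phi$ with $\phi\in H^1_\sharp(Y;\C)$; from $E^3=0$ on $\Sigma_1$ we get $\partial_3\phi=-1$ there, and integrating along that segment while using the periodicity of $\phi$ yields $0=\int_{-1/2}^{1/2}\partial_3\phi(0,0,t)\d t=-1$, a contradiction. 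So \eqref{eq:Analysis-cell-problem-for-electric-field} admits no solution of mean $\e_3$ and, consistently with \cite{SU17hommaxwell}, one sets $E^3:=0$; thus $b=\int_Y E^3\cdot E^3=0$. Writing $\gamma:=(\varepsilon_{\eff})_{11}$ gives $\varepsilon_{\eff}=\diag(\gamma,\gamma,0)$.

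The routine part is the verification of the transformation principle (that the pushed-forward field solves \eqref{eq:Analysis-cell-problem-for-electric-field} with the transformed mean, and that one may carry the resulting identity over to $\varepsilon_{\eff}$). The one genuinely delicate point is the axial direction: one must recognize that \eqref{eq:Analysis-cell-problem-for-electric-field} with $l=3$ has no solution of mean $\e_3$, so that the correct reading is $E^3=0$; it is precisely this degeneracy that makes the $(3,3)$-entry --- and with it the entire third row and column of $\varepsilon_{\eff}$ --- vanish.
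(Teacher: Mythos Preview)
Your argument is essentially the paper's: both use the reflection $\diag(-1,1,1)$ and the quarter-turn about $\e_3$ to force the upper $2\times 2$ block to be $\gamma\,\Id$, and your packaging via ``$Q$ commutes with $\varepsilon_{\eff}$'' is just a clean reformulation of the paper's individual symmetry computations. The one substantive difference is the third row and column: the paper dispatches these by citing \cite[Lemma~3.2]{SU17hommaxwell} (which encodes $3\in\mathcal{L}_{\Sigma_1}$), whereas you supply a direct obstruction argument showing that no solution of mean $\e_3$ exists and hence $E^3=0$ by convention---a nice self-contained alternative.

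One small technical wrinkle: integrating $\partial_3\phi$ along the single segment $\{(0,0,t)\}$ is not justified for $\phi\in H^1_\sharp(Y)$, since an $L^2$ function has no well-defined trace on a one-dimensional set. The fix is immediate: integrate $E^3\cdot\e_3=1+\partial_3\phi$ over all of $\Sigma_1$ instead, and use Fubini together with periodicity in $y_3$ to obtain $0=\int_{\Sigma_1}E^3\cdot\e_3=|\Sigma_1|+0$, the same contradiction. Also note that your commutation argument for the entries $(\varepsilon_{\eff})_{13}$ and $(\varepsilon_{\eff})_{23}$ tacitly relies on $E^3$ being defined (as zero), so logically the $l=3$ discussion belongs first; once $E^3=0$ is in place, all the $Q$'s you use fix $\e_3$ and the transformation identity $QE^l(Q^\top\cdot)=\sum_m Q_{ml}E^m$ holds trivially for $l=3$ as well.
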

\begin{proof}
  As shown in Table~\ref{tab:analysissummary}, we find that
  $\mathcal{N}_{\Sigma_1}= \set{1,2}$. From~\cite[Lemma
  3.2]{SU17hommaxwell} we hence deduce that
  $(\varepsilon_{\eff})_{k,3}$ as well as $(\varepsilon_{\eff})_{3,k}$
  vanish for all $k \in \set{1,2,3}$.  We claim that the matrix
  $\varepsilon_{\eff}$ is symmetric. Because of
  $(\varepsilon_{\eff})_{k,3} = (\varepsilon_{\eff})_{3,k} = 0$ we
  only have to  prove that
  $(\varepsilon_{\eff})_{1,2} = (\varepsilon_{\eff})_{2,1}$. As the
  solutions $E^1$ and $E^2$ of the cell
  problem~\eqref{eq:Analysis-cell-problem-for-electric-field} are real
  vector fields, we compute that
  \begin{equation*}
    (\varepsilon_{\eff})_{1,2} = \int_Y E^1 \cdot E^2 = \int_Y E^2 \cdot
    E^1 = (\varepsilon_{\eff})_{2,1}\, .
  \end{equation*}

 To show that $(\varepsilon_{\eff})_{1,2} =
  (\varepsilon_{\eff})_{2,1} = 0$, we consider the map $M \colon Y \to
  Y$ that is defined by the diagonal matrix $\diag (-1, 1, 1)$.
  Note that $M(\Sigma_1) = \Sigma_1$. To shorten the notation, we set
  $E \coloneqq E^1$. Consider the vector field $F \colon Y \to \R^3$,
  \begin{equation*}
    F(x) \coloneqq ME(Mx) =
    \begin{pmatrix}
      -E_1 \\
      E_2 \\
      E_3
    \end{pmatrix}
(-x_1, x_2, x_3)\, .
\end{equation*}
One readily checks that $F$ is a solution to the cell
problem~\eqref{eq:Analysis-cell-problem-for-electric-field} with
$
  \int_Y F = -\e_1\, .
$
  Due to the unique solvability of the cell
  problem~\eqref{eq:Analysis-cell-problem-for-electric-field}, we conclude that
  that $F=-E $. Similarly, we find that $ME^2\circ M = E^2$. Thus
  \begin{equation*}
  (\varepsilon_{\eff})_{1,2} = \int_Y E^1 \cdot E^2 = -\int_{Y}
  ME^1(My)
    \cdot ME^2(My) \d y = - \int_Y E^1 \cdot E^2 = -
    (\varepsilon_{\eff})_{1,2}\, .
  \end{equation*}
  Hence $(\varepsilon_{\eff})_{1,2} = (\varepsilon_{\eff})_{2,1} = 0$.

  We are left to prove $(\varepsilon_{\eff})_{2,2}
  =(\varepsilon_{\eff})_{1,1}$. To do so, we consider the rotation map $R
  \colon Y \to Y$ which is defined by the matrix
  \begin{equation*}
    \begin{pmatrix}
      0 & -1 & 0\\
      1 & 0 & 0 \\
      0 & 0 & 1
    \end{pmatrix}
    \, .
  \end{equation*}
  Then $R(\Sigma_1) =
  \Sigma_1$. Moreover, as the cell
  problem~\eqref{eq:Analysis-cell-problem-for-electric-field} is uniquely
  solvable, we find that $RE^2\circ R = -E^1$. Thus
  \begin{equation*}
    (\varepsilon_{\eff})_{1,1} = \int_Y E^1 \cdot E^1 = \int_{Y} RE^2(Ry)
    \cdot RE^2(Ry) \d y = \int_Y E^2 \cdot E^2 = (\varepsilon_{\eff})_{2,2}\, .
  \end{equation*}
  This proves the claim.
\end{proof}

By Theorem 4.1 of~\cite{SU17hommaxwell}, the microstructure
$\Sigma_1$ together with the effective
permittivity from~\eqref{eq:Analysis-metal-cylinder-effective-permittivity}
and
permeability from~\eqref{eq:Analysis-metal-cylinder-effective-permeability}
implies that the effective equations are
\begin{subequations}\label{eq:Analysis-effective-equations-metal-cylinder-case-1}
  \begin{empheq}[left=\empheqlbrace]{alignat=2}
    \partial_2 \Heff_3 - \partial_3 \Heff_2 & = - i \omega
    \varepsilon_0
    (\hat{\varepsilon}\Eeff)_1 && \text{ in } G\, ,\\
    \partial_3 \Heff_1 - \partial_1 \Heff_3 & = - i \omega
    \varepsilon_0 (\hat{\varepsilon }\Eeff)_2 && \text{ in } G \label{eq:Analysis-effective-equations-metal-cylinder-perp-case-curlH=E-for-some-components} \, , \\
    \Eeff_3 & = 0 && \text{ in } Q_M \label{eq:Analysis-effective-equations-metal-cylinder-perp-case-E=0-in-QM}\, .
  \end{empheq}
\end{subequations}
The
equations~\eqref{eq:Analysis-effective-equations-metal-cylinder-case-1}
do not repeat~\eqref{eq:Analysis-the-general-effective-equations-1}
and~\eqref{eq:Analysis-the-general-effective-equations-2}.
Due
to~\eqref{eq:Analysis-the-general-effective-equations-1},
the effective electric field is divergence-free. As we assume that $\Eeff$ travels along
the $x_1$-axis, the first component $\Eeff_1$ vanishes. Due
to~\eqref{eq:Analysis-effective-equations-metal-cylinder-perp-case-E=0-in-QM}
we expect no transmission if the effective electric field is polarized
in $\e_3$-direction. For nontrivial transmission, we may therefore make the following ansatz for
the effective electric field $\Eeff \colon G \to \mathbb{C}^3$,
\begin{equation*}
  \Eeff (x) \coloneqq \big( \e^{-ik_0 x_1} + R \e^{i k_0 x_1}\big)
  \e_2 \quad \text{ for } x = (x_1, x_2, x_3) \in Q_R\,.
\end{equation*}
Thanks
to~\eqref{eq:Analysis-the-general-effective-equations-2}
the magnetic field $\Heff$ is given by
\begin{equation*}
  \Heff (x) = - \frac{k_0}{\omega \mu_0} \big( \e^{-i k_0 x_1} - R
  \e^{ i k_0 x_1} \big)\e_3 \quad \text{ for } x = (x_1, x_2, x_3)
  \in Q_R\, .
\end{equation*}
In the meta-material $Q_M$ we write
\begin{equation*}
  \Eeff(x) = \big( T_M \e^{- i k_1 x_1} + R_M \e^{i k_1 x_1}\big)\e_2
  \text{ and } \Heff(x) = - \frac{k_1}{\omega \mu_0 \alpha}\big( T_M \e^{-i k_1 x_1} - R_M
  \e^{i k_1 x_1}) \e_3
\end{equation*}
for $x \in Q_M$,
where we used
equation~\eqref{eq:Analysis-the-general-effective-equations-1}
and~\eqref{eq:Analysis-metal-cylinder-effective-permeability} to
determine the magnetic field with
$\alpha \coloneqq \abs{ Y \setminus \Sigma_1}$. To compute the value
of $k_0$ we use
equation~\eqref{eq:Analysis-the-general-effective-equations-2}
and we find that $k_0 = \omega \sqrt{\varepsilon_0
  \mu_0}$. From~\eqref{eq:Analysis-effective-equations-metal-cylinder-perp-case-curlH=E-for-some-components}
we deduce that $k_1 = k_0 \sqrt{\alpha \gamma}$.  In $Q_L$ we
choose~\eqref{eq:Analysis-general-ansatz-for-E-and-H-in-QL} as the
ansatz for $\Eeff$ and $\Heff$, where $k = 2$ and $l=3$.

\begin{lemma}[Transmission and reflection coefficients]\label{lem:Analysis-metal-cylinder-case-1-transmission-and-reflection-coefficients}
  Given the electric and magnetic fields $\Eeff$ and $\Heff$ as
  described above. Set $\alpha \coloneqq \abs{Y \setminus \Sigma_1}$,
  $k_1 = \omega \sqrt{\varepsilon_0 \mu_0 \alpha \gamma}$, and
  $p_1 \coloneqq \e^{ i k_1 L }$.
   The coefficients are then given by
  \begin{alignat*}{2}
    R &= \frac{( \alpha - \gamma)
    (1 - p_1^2)}{ (\alpha + \gamma)(1-p_1^2)+ 2 \sqrt{\alpha
  \gamma} (1+ p_1^2)}\, , \quad
    T_M & =\frac{2 \sqrt{\alpha} ( \sqrt{\alpha} +
  \sqrt{\gamma})}{(\alpha + \gamma)(1-p_1^2)+ 2 \sqrt{\alpha
  \gamma} (1+ p_1^2)}\, ,
    \\
    R_M &= - \frac{2 \sqrt{\alpha} p_1^2 ( \sqrt{\alpha} -
   \sqrt{\gamma})}{ (\alpha + \gamma)(1-p_1^2)+ 2 \sqrt{\alpha
  \gamma} (1+ p_1^2)} \, , \quad
    T & =  \frac{4 \sqrt{ \alpha\gamma}p_1}{ (\alpha + \gamma)(1-p_1^2)+ 2 \sqrt{\alpha
  \gamma} (1+ p_1^2)} \, .
  \end{alignat*}
\end{lemma}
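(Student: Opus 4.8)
The plan is to determine the four constants $R, T_M, R_M, T$ by imposing the transmission conditions at the two interfaces $\{x_1 = -L\}$ and $\{x_1 = 0\}$, which amounts to solving a $4\times 4$ linear system. The effective fields are piecewise plane waves in the three regions $Q_R$, $Q_M$, $Q_L$; the only remaining freedom is in the coefficients. The physically correct matching conditions for the effective Maxwell system \eqref{eq:Analysis-the-general-effective-equations} are the continuity of the tangential components of $\Eeff$ and $\Heff$ across each interface. Here the tangential directions at $\{x_1 = \text{const}\}$ are $\e_2$ and $\e_3$; since $\Eeff$ is polarized along $\e_2$ and $\Heff$ along $\e_3$, both conditions are scalar, giving exactly two equations per interface, hence four equations for the four unknowns.

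First I would write down the four matching equations explicitly. At the interface $\{x_1 = 0\}$ (between $Q_M$ and $Q_R$): continuity of $\Eeff\cdot\e_2$ gives $1 + R = T_M + R_M$, and continuity of $\Heff\cdot\e_3$ gives $\frac{k_0}{\omega\mu_0}(1 - R) = \frac{k_1}{\omega\mu_0\alpha}(T_M - R_M)$, i.e.\ $1 - R = \sqrt{\gamma/\alpha}\,(T_M - R_M)$ after inserting $k_1 = k_0\sqrt{\alpha\gamma}$. At the interface $\{x_1 = -L\}$ (between $Q_L$ and $Q_M$): with $p_1 = \e^{ik_1 L}$ one gets $T_M p_1 + R_M p_1^{-1} = T$ from the electric field, and $\sqrt{\gamma/\alpha}\,(T_M p_1 - R_M p_1^{-1}) = T$ from the magnetic field (the $Q_L$ ansatz in \eqref{eq:Analysis-general-ansatz-for-E-and-H-in-QL} has its phase normalized so that it equals $T\e_k$ at $x_1 = -L$). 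Subtracting the two interface-$(-L)$ equations expresses $R_M p_1^{-1}$ in terms of $T_M p_1$, namely $R_M p_1^{-1} = T_M p_1 \,\frac{\sqrt\gamma - \sqrt\alpha}{\sqrt\gamma + \sqrt\alpha}$, which already exhibits the ratio $R_M/T_M = -p_1^2\,\frac{\sqrt\alpha - \sqrt\gamma}{\sqrt\alpha + \sqrt\gamma}$ seen in the claimed formulas.

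Then I would back-substitute: plug this relation into the two interface-$0$ equations to get a $2\times 2$ system for $R$ and $T_M$. Adding and subtracting those two equations isolates $T_M(1 + \text{(ratio)})$ and $T_M(1 - \text{(ratio)})\sqrt{\gamma/\alpha}$ in terms of $1\pm R$; dividing eliminates $T_M$ and yields $R$, after which $T_M$, then $R_M$ via the ratio, then $T$ via either interface-$(-L)$ equation follow by direct substitution. The bookkeeping is a routine but slightly tedious rationalization: one clears denominators, uses $\sqrt\alpha\sqrt\gamma = \sqrt{\alpha\gamma}$, and groups terms as $(\alpha+\gamma)(1-p_1^2)$ and $2\sqrt{\alpha\gamma}(1+p_1^2)$, which is exactly the common denominator appearing in all four formulas.

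The main obstacle is not conceptual but the correct identification of the matching conditions and, relatedly, getting all phase conventions consistent — in particular the $\frac{1}{\alpha}$ factor in $\Heff$ on $Q_M$ coming from $\mu_{\eff} = \diag(1,1,\alpha)$ in \eqref{eq:Analysis-metal-cylinder-effective-permeability}, the normalization of the $Q_L$ ansatz at $x_1 = -L$, and the sign in $k_1 = k_0\sqrt{\alpha\gamma}$ (one must pick the branch with $\Re k_1 > 0$ or $\Im k_1 > 0$, since $\gamma \in \C$). Once these are fixed, the linear algebra is forced and the verification against Table \ref{table:res-PC} is immediate; in particular one checks $|T|^2 + |R|^2 = 1$ when $\gamma$ is real and positive, consistent with energy conservation, and that the formula for $T$ here matches the first row of Table \ref{table:res-PC} upon writing $p_1 = p_0\e^{i\sqrt{\alpha\gamma}L}$.
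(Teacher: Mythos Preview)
Your proposal is correct and follows essentially the same approach as the paper: both derive the identical four interface conditions (continuity of $\Eeff_2$ and $\Heff_3$ at $x_1=0$ and $x_1=-L$) and then solve the resulting $4\times 4$ linear system. The only cosmetic difference is the elimination order---the paper first combines the two equations at each interface to eliminate $R$ and $T$, obtaining a $2\times 2$ system in $T_M,R_M$ (via the abbreviations $d_\pm = 1\pm\sqrt{\gamma/\alpha}$), whereas you first extract the ratio $R_M/T_M$ from the $x_1=-L$ equations and then substitute into the $x_1=0$ equations; both routes yield the same formulas with the same amount of algebra.
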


\begin{proof}
  By~\eqref{eq:Analysis-the-general-effective-equations-1}
  the tangential trace of $\Eeff$ has no jump across the surfaces
  $\set{ x \in G \given x_1 = 0}$ and $\set{ x \in G \given x _1 = -
    L}$. Thus
  \begin{equation} \label{eq:Analysis-equation-for-transmission-and-reflection-metal-cylinder-case-1-1}
  T_M + R_M = 1+R \quad \text{ and } \quad T = p_1 T_M + \frac 1p_1
 R_M\, .
\end{equation}
The effective field $\Heff$ is parallel to $\e_3$ and hence,
by~\eqref{eq:Analysis-effective-equations-metal-cylinder-perp-case-curlH=E-for-some-components},
the third component $\Heff_3$ does not jump across the surfaces $\set{
x \in G \given x_1 = 0}$ and $\set{ x \in G \given x_1 = -L}$. We
may therefore conclude that
\begin{equation}\label{eq:Analysis-equation-for-transmission-and-reflection-metal-cylinder-case-1-2}
  \sqrt{\frac{\gamma}{\alpha}} \big(T_M-R_M \big) = 1- R \quad \text{ and } \quad
  T = \sqrt{\frac{\gamma}{\alpha}} \bigg( p_1 T_M - \frac 1p_1 R_M
  \bigg)\, .
\end{equation}
Here we used that $k_0 = \omega \sqrt{ \mu_0 \varepsilon_0}$ and $k_1
= k_0 \sqrt{ \alpha \gamma}$.
Solving the equations on the left-hand side
in~\eqref{eq:Analysis-equation-for-transmission-and-reflection-metal-cylinder-case-1-1}
and~\eqref{eq:Analysis-equation-for-transmission-and-reflection-metal-cylinder-case-1-2}
for $R$ and the other two equations for $T$,
we find that
\begin{align}\label{eq:Analysis-equation-for-transmission-and-reflection-metal-cylinder-case-1-3}
  T_M + R_M - 1& = R = 1- \sqrt{\frac{\gamma}{\alpha}}(T_M-R_M)
  \shortintertext{and }
  p_1T_M + \frac{1}{p_1}R_M& = T = \sqrt{\frac{\gamma}{\alpha}}
  \bigg(p_1 T_M - \frac{1}{p_1}R_M \bigg)\, .\label{eq:Analysis-equation-for-transmission-and-reflection-metal-cylinder-case-1-5}
\end{align}
Setting $d_+ \coloneqq 1+ \sqrt{\gamma/\alpha}$ and
$d_{-}\coloneqq 1 - \sqrt{\gamma/\alpha}$,
equations~\eqref{eq:Analysis-equation-for-transmission-and-reflection-metal-cylinder-case-1-3}
and~\eqref{eq:Analysis-equation-for-transmission-and-reflection-metal-cylinder-case-1-5}
can be written as
\begin{equation}\label{eq:eq:Analysis-equation-for-transmission-and-reflection-metal-cylinder-case-1-4}
  d_+ T_M = 2 - d_{-} R_M \quad \text{ and } \quad
  p_1 d_{-}T_M = - \frac{1}{p_1} d_+ R_M\, .
\end{equation}
Solving each of the two equations
in~\eqref{eq:eq:Analysis-equation-for-transmission-and-reflection-metal-cylinder-case-1-4}
for $R_M$ and then equating the two
expressions for $R_M$, we obtain
\begin{align*}\label{eq:Analysis-equation-for-transmission-and-reflection-metal-cylinder-case-1-5}
  T_M &= \frac{2 d_+}{d_+^2 - d_{-}^2p_1^2}
      = \frac{ 2 ( 1 + \sqrt{\gamma / \alpha}) }{
         (1 + \sqrt{\gamma / \alpha} )^2
      - (1 - \sqrt{\gamma / \alpha}
  )^2p_1^2}
   = \frac{2 \sqrt{\alpha} ( \sqrt{\alpha} +
  \sqrt{\gamma})}{(\sqrt{\alpha} + \sqrt{\gamma})^2 -
  (\sqrt{\alpha} - \sqrt{\gamma})^2 p_1^2}\, .
\end{align*}
Note that $(\sqrt{\alpha} + \sqrt{\gamma})^2 - (\sqrt{\alpha} -
\sqrt{\gamma})^2p_1^2 = (\alpha + \gamma)(1-p_1^2)+ 2 \sqrt{\alpha
  \gamma} (1+ p_1^2)$, which yields the formula for $T_M$.
From the second equation
in~\eqref{eq:eq:Analysis-equation-for-transmission-and-reflection-metal-cylinder-case-1-4},
we deduce that
\begin{equation*}
  R_M = - p_1^2\frac{d_{-}}{d_+}T_M
  = - p_1^2 \frac{\sqrt{\alpha} - \sqrt{\gamma}}{\sqrt{\alpha} +
    \sqrt{\gamma}} T_M
  = - \frac{2 \sqrt{\alpha} p_1^2 ( \sqrt{\alpha} -
   \sqrt{\gamma})}{ (\alpha + \gamma)(1-p_1^2)+ 2 \sqrt{\alpha
  \gamma} (1+ p_1^2)}\, .
\end{equation*}

By~\eqref{eq:Analysis-equation-for-transmission-and-reflection-metal-cylinder-case-1-3},
we have that
\begin{align*}
  R &= T_M + R_M - 1 = \frac{2 \sqrt{\alpha} (\sqrt{\alpha} +
    \sqrt{\gamma} ) - 2 \sqrt{\alpha} p_1^2 (\sqrt{\alpha} - \sqrt{\gamma} )}{ (\alpha + \gamma)(1-p_1^2)+ 2 \sqrt{\alpha
  \gamma} (1+ p_1^2)} - 1 \\
  &= \frac{( \alpha - \gamma)
    (1 - p_1^2)}{ (\alpha + \gamma)(1-p_1^2)+ 2 \sqrt{\alpha
  \gamma} (1+ p_1^2)}\, .
\end{align*}
To compute the coefficient $T$ we use
equation~\eqref{eq:Analysis-equation-for-transmission-and-reflection-metal-cylinder-case-1-1}
and find that
\begin{align*}
  T &= \frac{2 \sqrt{\alpha} ( \sqrt{\alpha} +
  \sqrt{\gamma})p_1 - 2 \sqrt{\alpha} (\sqrt{\alpha} - \sqrt{\gamma})p_1}{ (\alpha + \gamma)(1-p_1^2)+ 2 \sqrt{\alpha
  \gamma} (1+ p_1^2)}
  = \frac{4 \sqrt{ \alpha} \sqrt{\gamma}p_1}{ (\alpha + \gamma)(1-p_1^2)+ 2 \sqrt{\alpha
  \gamma} (1+ p_1^2)}\, .
\end{align*}
This proves the claim.
\end{proof}

\subsubsection{The metal cylinder $\Sigma_2$}
\label{sec:metal-cylinder-case}
Similar to the previous section, we shall determine the transmission
and reflection coefficients for a metal cylinder, considering the
microstructure $\Sigma_2$. We define the effective permeability and
the effective permittivity
$\hat{\mu}, \hat{\varepsilon } \colon G \to \C^3$ as
in~\eqref{eq:Analysis-effective-permeability}
and~\eqref{eq:Analysis-effective-permittivity}.
Following the
reasoning of Section~\ref{sec:metal-cylinder}, we find that the
$\mu_{\eff}$ and
$\varepsilon_{\eff}$ are given by
\begin{equation*}
\mu_{\eff} = \diag \big( \abs{Y \setminus \Sigma_2}, 1, 1 \big)
  \quad \text{ and } \quad
  \varepsilon_{\eff} = \diag (0, \gamma, \gamma)\, ,
\end{equation*}
where $\gamma \in \mathbb{C}$ is defined as $\gamma \coloneqq \int_Y E^2
\cdot E^2$. The effective equations for the  microstructure
$\Sigma_2$ are
\begin{subequations}
  \begin{empheq}[left=\empheqlbrace]{alignat=2}
    \partial_3 \Heff_1 - \partial_1 \Heff_3 & = -i \omega
    \varepsilon_0 (\hat{\varepsilon} \Eeff)_2 && \text{ in } G \, , \label{eq:Analysis-effective-equations-metal-cylinder-case-2-curlH=E-for-some-components} \\
    \partial_1 \Heff_2 - \partial_2 \Heff_1 & = - i \omega
    \varepsilon_0 (\hat{\varepsilon } \Eeff)_3 && \text{ in } G \, ,
    \\
    \Eeff_1& = 0 && \text{ in } Q_M\, .
  \end{empheq}
\end{subequations}

We may take a similar ansatz for the effective fields as in
Section~\ref{sec:metal-cylinder} and obtain the following transmission
and reflection coefficients. Note that $k_1$ in
Section~\ref{sec:metal-cylinder} has to be replaced by $k_2 \coloneqq
k_0 \sqrt{\gamma}$.
\begin{lemma}[Transmission and reflection coefficients]
 Within the setting of Section~\ref{sec:metal-cylinder}, we set $k_2 = \omega \sqrt{\varepsilon_0
    \mu_0 \gamma}$ and $p_2 \coloneqq e^{ i k_2 L}$. The reflection and transmission
  coefficients for $\Sigma_2$ are given by
  \begin{alignat*}{3}
    R &= \frac{(1- \gamma) (1-p_2^2)}{ (1+ \gamma)(1-p_2^2) + 2
      \sqrt{\gamma}(1+p_2^2)}\, , & \quad & T_M &=\frac{2 (1 +
      \sqrt{\gamma})}{(1+
      \gamma)(1-p_2^2) + 2 \sqrt{\gamma}(1+p_2^2)}\, , \\
    R_M &= - \frac{2 p_2^2(1 - \sqrt{\gamma})}{ (1+ \gamma)(1-p_2^2) +
      2 \sqrt{\gamma}(1+p_2^2)}\, , &\quad & T &= \frac{4 p_2
      \sqrt{\gamma}}{ (1+ \gamma)(1-p_2^2) + 2
      \sqrt{\gamma}(1+p_2^2)} \, .
  \end{alignat*}
\end{lemma}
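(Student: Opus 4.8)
The plan is to reproduce, almost verbatim, the computation carried out for $\Sigma_1$ in the proof of Lemma~\ref{lem:Analysis-metal-cylinder-case-1-transmission-and-reflection-coefficients}. The only structural difference is that for $\Sigma_2$ the cylinder axis points in the $\e_1$-direction, so the relevant diagonal entry of the effective permeability is now $(\mu_{\eff})_{33}=1$ instead of $\abs{Y\setminus\Sigma_1}$; concretely, one simply reruns the previous argument with the air-volume fraction $\alpha$ replaced by $1$. First I would fix the ansatz compatible with $\Heff\parallel\e_3$: in $Q_R$ take $\Eeff(x)=(\e^{-ik_0x_1}+R\e^{ik_0x_1})\e_2$ and, by \eqref{eq:Analysis-the-general-effective-equations-1}, $\Heff(x)=-\tfrac{k_0}{\omega\mu_0}(\e^{-ik_0x_1}-R\e^{ik_0x_1})\e_3$. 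In $Q_M$ take $\Eeff(x)=(T_M\e^{-ik_2x_1}+R_M\e^{ik_2x_1})\e_2$; since $\curl\Eeff$ is then parallel to $\e_3$ and $\mu_{\eff}=\diag(\abs{Y\setminus\Sigma_2},1,1)$, equation \eqref{eq:Analysis-the-general-effective-equations-1} forces $\Heff\parallel\e_3$ with $\Heff(x)=-\tfrac{k_2}{\omega\mu_0}(T_M\e^{-ik_2x_1}-R_M\e^{ik_2x_1})\e_3$ — note that no $\alpha$-factor appears in the denominator here because $(\mu_{\eff})_{33}=1$, which is exactly the point where the formulas differ from those for $\Sigma_1$. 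In $Q_L$ take the transmitted plane wave with coefficient $T$, precisely the $\Sigma_1$ ansatz with $k=2$, $l=3$.

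Next I would pin down the wave numbers and the interface conditions. Inserting the $Q_R$-ansatz into \eqref{eq:Analysis-the-general-effective-equations-2} gives $k_0=\omega\sqrt{\varepsilon_0\mu_0}$. Inserting the $Q_M$-ansatz into \eqref{eq:Analysis-effective-equations-metal-cylinder-case-2-curlH=E-for-some-components} — where only the term $-\partial_1\Heff_3$ survives on the left, since $\Heff_1\equiv0$ and all fields depend on $x_1$ only — yields $k_2^2=\omega^2\varepsilon_0\mu_0\gamma=k_0^2\gamma$, i.e.\ $k_2=k_0\sqrt{\gamma}$, as asserted. For the matching at the two interfaces $\{x_1=0\}$ and $\{x_1=-L\}$: by \eqref{eq:Analysis-the-general-effective-equations-1} the tangential trace of $\Eeff$, namely $\Eeff_2$, cannot jump, which gives $T_M+R_M=1+R$ and $p_2T_M+p_2^{-1}R_M=T$ with $p_2=\e^{ik_2L}$; by \eqref{eq:Analysis-effective-equations-metal-cylinder-case-2-curlH=E-for-some-components}, whose right-hand side lies in $L^2$ so that $\partial_1\Heff_3$ has no singular part, the component $\Heff_3$ — which is the entire tangential trace of $\Heff$, since $\Heff\parallel\e_3$ and $\e_3$ is tangential to $\{x_1=\mathrm{const}\}$ — is also continuous, giving $\sqrt{\gamma}(T_M-R_M)=1-R$ and $\sqrt{\gamma}(p_2T_M-p_2^{-1}R_M)=T$.

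Finally I would solve this $4\times4$ linear system for $(R,T,T_M,R_M)$; it is exactly the system of the $\Sigma_1$ proof with $\alpha=1$. Setting $d_\pm\coloneqq1\pm\sqrt{\gamma}$, combining the two equations of each pair reduces it to $d_+T_M=2-d_-R_M$ and $p_2d_-T_M=-p_2^{-1}d_+R_M$, whence $T_M=2d_+/(d_+^2-d_-^2p_2^2)$, $R_M=-p_2^2(d_-/d_+)T_M$, $R=T_M+R_M-1$, and $T=p_2T_M+p_2^{-1}R_M$; the identity $(1+\sqrt{\gamma})^2-(1-\sqrt{\gamma})^2p_2^2=(1+\gamma)(1-p_2^2)+2\sqrt{\gamma}(1+p_2^2)$ for the common denominator then puts each coefficient into the stated form. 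I do not expect a genuine obstacle: this is a routine transcription of the $\Sigma_1$ case, and the only points needing care are the sign and normalization bookkeeping at the two interfaces (with the $\alpha$-factor now absent, which shifts several constants) and the remark that $\Heff_3$ is indeed the relevant continuous trace — both handled exactly as before.
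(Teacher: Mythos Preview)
Your proposal is correct and follows essentially the same approach as the paper: derive the four interface conditions from continuity of $\Eeff_2$ and $\Heff_3$ across $\{x_1=0\}$ and $\{x_1=-L\}$, reduce to the $2\times 2$ system in $T_M,R_M$ via $d_\pm=1\pm\sqrt{\gamma}$ (the paper calls these $c_\pm$), and simplify the common denominator. Your observation that this is precisely the $\Sigma_1$ computation with $\alpha$ replaced by $1$ is exactly the structural point, and the paper's own argument proceeds identically.
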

Note that in the above transmission and reflection coefficients the
volume fraction of air $\alpha = \abs{ Y \setminus \Sigma_2}$ does not
appear. This is different for the metal cylinder $\Sigma_1$; see Lemma~\ref{lem:Analysis-metal-cylinder-case-1-transmission-and-reflection-coefficients}.

\begin{proof}
Thanks
to~\eqref{eq:Analysis-the-general-effective-equations-1}
we know that the tangential components of $\Eeff$ do not jump across
the surfaces $\set{ x \in G \given x_1 = 0}$ and $\set{x \in G
  \given x_1 = -L}$. Hence
\begin{equation}\label{eq:Analysis-equations-for-transmission-and-reflection-coefficients-metal-plate-case-2-1}
1+R = T_M + R_M \quad \text{ and } T = p_2 T_M + \frac{1}{p_2}R_M \, .
\end{equation}
The effective field $\Heff$ is parallel to $\e_3$ and hence, due
to~\eqref{eq:Analysis-effective-equations-metal-cylinder-case-2-curlH=E-for-some-components},
the third component $\Heff_3$ does neither jump across $\set{ x \in G
  \given x_1 = 0}$ nor across $\set{ x \in G \given x_1 = -L}$. We may
therefore conclude that
\begin{equation}\label{eq:eq:Analysis-equations-for-transmission-and-reflection-coefficients-metal-plate-case-2-2}
  1- R = \sqrt{\gamma} (T_M - R_M) \quad \text{ and } \quad T =
  \sqrt{\gamma} \bigg(p_2 T_M - \frac{1}{p_2} R_M \bigg)\, .
\end{equation}
Here we used that $k_2 = k_0 \sqrt{\gamma} = \omega \sqrt{
  \varepsilon_0 \mu_0 \gamma}$.

Solving the equations on the left-hand side
in~\eqref{eq:Analysis-equations-for-transmission-and-reflection-coefficients-metal-plate-case-2-1}
and~\eqref{eq:eq:Analysis-equations-for-transmission-and-reflection-coefficients-metal-plate-case-2-2}
for $R$ and the other two for $T$, we find that
\begin{equation*}
  T_M +R_M - 1 =R = 1 - \sqrt{\gamma} (T_M - R_M) \: \text{ and }
  \:
   p_2 T_M + \frac{1}{p_2} R_M =T = \sqrt{\gamma} \bigg(p_2 T_M - \frac{1}{p_2}R_M \bigg)\, .
\end{equation*}
Setting $c_+ \coloneqq 1 + \sqrt{\gamma}$ and $c_{-} \coloneqq 1 -
\sqrt{\gamma}$, these two equations can be re-written as
\begin{equation}\label{eq:eq:Analysis-equations-for-transmission-and-reflection-coefficients-metal-plate-case-2-3}
  c_+ T_M = 2 - c_{-}R_M \quad \text{ and } \quad  c_{-}p_2 T_M = -
  \frac{1}{p_2} c_+ R_M\, .
\end{equation}
We can solve for $T_M$ and obtain
\begin{equation*}
  T_M = \frac{2 c_+}{c_+^2 - c_{-}^2p_2^2} = \frac{2 (1 +
    \sqrt{\gamma})}{(1 + \sqrt{\gamma})^2 - (1 - \sqrt{\gamma})^2
    p_2^2}\, .
\end{equation*}
Note that $(1+ \sqrt{\gamma})^2 - (1- \sqrt{\gamma})^2p_2^2 = (1+
\gamma)(1-p_2^2) + 2 \sqrt{\gamma}(1+p_2^2)$, which proves the formula
for $T_M$.
By~\eqref{eq:eq:Analysis-equations-for-transmission-and-reflection-coefficients-metal-plate-case-2-3}
we then conclude that
\begin{equation*}
  R_M = - p_2^2\frac{c_{-}}{c_+}T_M = - p_2^2 \frac{1 -
    \sqrt{\gamma}}{1+\sqrt{\gamma}}T_M = - \frac{2 p_2^2(1 - \sqrt{\gamma})}{ (1+
\gamma)(1-p_2^2) + 2 \sqrt{\gamma}(1+p_2^2)}\, .
\end{equation*}
To determine the coefficient $R$ we recall from above that $R = T_M +
R_M - 1$ and hence
\begin{equation*}
  R =  \frac{2 (1+ \sqrt{\gamma}) - 2 p_2^2 (1 -
    \sqrt{\gamma}) - (1 + \sqrt{\gamma})^2 + (1 - \sqrt{\gamma})^2p_2^2}{ (1+
\gamma)(1-p_2^2) + 2 \sqrt{\gamma}(1+p_2^2)}
  = \frac{(1- \gamma) (1-p_2^2)}{ (1+
\gamma)(1-p_2^2) + 2 \sqrt{\gamma}(1+p_2^2)}\, .
\end{equation*}
As $T = p_2 T_M + 1/p_2 R_M$, we find that
\begin{equation*}
  T = \frac{2 p_2 (1+ \sqrt{\gamma}) - 2 p_2 (1- \sqrt{\gamma})}{ (1+
\gamma)(1-p_2^2) + 2 \sqrt{\gamma}(1+p_2^2)} = \frac{4 p_2 \sqrt{\gamma}}{ (1+
\gamma)(1-p_2^2) + 2 \sqrt{\gamma}(1+p_2^2)}\, .
\end{equation*}
This proves the claim.
\end{proof}

We chose the same polarization for the electric and the magnetic field
as in Section~\ref{sec:metal-cylinder}. By symmetry of the
microstructure,  we may as well assume that $\Eeff$ is parallel to $\e_3$ and
$\Heff$ is parallel to $\e_2$ and obtain the same reflection and
transmission coefficients.

\subsubsection{The metal plate}
\label{sec:metal-plate}
We consider the microstructure $\Sigma_3$; that is, a metal plate
which is perpendicular to $\e_2$.
Following the reasoning in Section 5.2 in~\cite{SU17hommaxwell}, we
determine the effective equations and obtain:
\begin{subequations}
  \begin{empheq}[left=\empheqlbrace]{alignat=2}
    \partial_3 \hat{H}_1 - \partial_1 \hat{H}_3 & = - i \omega
    \varepsilon_0
    \alpha^{-1} \hat{E}_2 && \text{ in } Q_M\, , \label{eq:Analysis-effective-equations-metal-plate-1-case-curlH=E-for-some-components} \\
    \hat{E}_1 &= \hat{E}_3  = 0 && \text{ in } Q_M\, , \label{eq:Analysis-effective-equations-metal-plate-1-case-electric-field-in-meta-material}\\
    \hat{H}_2 &= 0 && \text{ in } Q_M\, ,
  \end{empheq}
\end{subequations}
where $\alpha \coloneqq \abs{Y \setminus \Sigma_3}$.

The electromagnetic wave is assumed to travel in $\e_1$-direction from right to
left. Moreover, by~\eqref{eq:Analysis-the-general-effective-equations-1},
the electric field is divergence free. Hence, the first component
$\Eeff_1$ vanishes. Because of~\eqref{eq:Analysis-effective-equations-metal-plate-1-case-electric-field-in-meta-material}
we expect no transmission if the electric field is polarized in
$\e_3$-direction. We may therefore make the following ansatz for the
effective electric field $\hat{E} \colon G \to \mathbb{C}^3$,
\begin{equation*}
  \hat{E}(x) \coloneqq \big(\e^{-i k_0 x_1} + R \e^{i k_0
    x_1}\big)\e_2\quad \text{ for } x = (x_1, x_2, x_3) \in Q_R\, .
\end{equation*}
Thanks to~\eqref{eq:Analysis-the-general-effective-equations-2},
the magnetic field $\hat{H}$ is given by
\begin{equation*}
  \hat{H}(x) =- \frac{k_0}{\omega \mu_0} \big( \e^{-i k_0 x_1} - R
  \e^{i k_0 x_1} \big) \e_3 \quad \text{ for } x \in Q_R\, .
\end{equation*}
By
equation~\eqref{eq:Analysis-effective-equations-metal-plate-1-case-electric-field-in-meta-material},
the first and the third component of
the effective electric field are trivial; from this and
equation~\eqref{eq:Analysis-the-general-effective-equations-1},
we deduce that
\begin{equation*}
  \hat{E}(x) = \big(T_M \e^{-i k_3 x_1} + R_M\e^{i k_3 x_1}\big) \e_2
  \quad \text{ and } \quad \hat{H}(x) = -\frac{k_3}{\omega \mu_0
    \alpha} \big(T_M \e^{-ik_3x_1} - R_M \e^{i k_3x_1} \big)\e_3\: \text{ in
  } Q_M\, .
\end{equation*}
The value of $k_3$ can be determined
by~\eqref{eq:Analysis-effective-equations-metal-plate-1-case-curlH=E-for-some-components}
and we find that $k_3 = k_0 = \omega \sqrt{\varepsilon_0 \mu_0}$. In
$Q_L$, we choose~\eqref{eq:Analysis-general-ansatz-for-E-and-H-in-QL}
as the ansatz for $\hat{E}$ and $\hat{H}$, where $k = 2$ and $l = 3$.

\begin{lemma}[Transmission and reflection coefficients]
  Given the effective fields $\hat{E}$ and $\hat{H}$ as described
  above. Set $\alpha \coloneqq \abs{Y \setminus \Sigma_3}$ and $p_0
  \coloneqq \e^{ i \omega \sqrt{\varepsilon_0 \mu_0} L}$. The
  reflection and transmission coefficients are given by
  \begin{alignat}{3}
    R &=  \frac{(\alpha^2 - 1)(1- p_0^2)}{(1 + \alpha^2)(1-p_0^2) + 2
        \alpha (1+ p_0^2)}\, , &\qquad & T_M
    &= \frac{2 \alpha (\alpha +1)}{(1 + \alpha^2)(1-p_0^2) + 2
        \alpha (1+ p_0^2)}\,
      ,
      \\
      R_M &= -\frac{2 \alpha p_0^2 (\alpha - 1)}{(1 + \alpha^2)(1-p_0^2) + 2
        \alpha (1+ p_0^2)} \, ,& \qquad & T & = \frac{4 p_0 \alpha}{(1 + \alpha^2)(1-p_0^2) + 2
        \alpha (1+ p_0^2)} \, .
  \end{alignat}
\end{lemma}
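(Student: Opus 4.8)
The plan is to follow exactly the same matching-of-boundary-conditions argument used in the metal cylinder $\Sigma_2$ case (Section~\ref{sec:metal-cylinder-case}), specialized to the present situation $\gamma = 1$, $k_3 = k_0$, but with the effective permeability $\mu_{\eff} = \diag(1,\alpha,1)$ producing the factor $\alpha$ in the constitutive law for $\hat H$ inside $Q_M$. First I would write down the six interface conditions at $x_1 = 0$ and $x_1 = -L$: the tangential component $\hat E_2$ is continuous across both surfaces (from~\eqref{eq:Analysis-the-general-effective-equations-1}, since $\curl \hat E = i\omega\mu_0\hat\mu\hat H$ has no normal singularity), and the tangential component $\hat H_3$ is continuous across both surfaces (from~\eqref{eq:Analysis-effective-equations-metal-plate-1-case-curlH=E-for-some-components}). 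Using the ansätze given just above the lemma statement, continuity of $\hat E_2$ gives $1 + R = T_M + R_M$ and $T = p_0 T_M + p_0^{-1} R_M$ (recall $p_0 = \e^{ik_0 L}$ and $k_3 = k_0$). Continuity of $\hat H_3$, after dividing out the common factor $k_0/(\omega\mu_0)$ and accounting for the $\alpha^{-1}$ from the effective permeability inside $Q_M$, gives $1 - R = \alpha^{-1}(T_M - R_M)$ and $T = \alpha^{-1}(p_0 T_M - p_0^{-1} R_M)$.

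Next I would solve this linear $4\times 4$ system. Introducing $c_+ \coloneqq 1 + \alpha^{-1}$ and $c_- \coloneqq 1 - \alpha^{-1}$ (equivalently, one may scale and work with $\alpha+1$ and $\alpha-1$ directly), adding and subtracting the equations involving $R$ eliminates $R$ and yields $c_+ T_M = 2 - c_- R_M$; similarly the two equations for $T$ give $p_0 c_- T_M = -p_0^{-1} c_+ R_M$. Solving for $R_M$ from the second and substituting into the first produces
\[
  T_M = \frac{2 c_+}{c_+^2 - c_-^2 p_0^2}
      = \frac{2\alpha(\alpha + 1)}{(\alpha+1)^2 - (\alpha-1)^2 p_0^2}\, ,
\]
where I multiply numerator and denominator by $\alpha^2$ to clear fractions. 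The identity $(\alpha+1)^2 - (\alpha-1)^2 p_0^2 = (1+\alpha^2)(1-p_0^2) + 2\alpha(1+p_0^2)$ is a one-line expansion and gives the claimed denominator. Then $R_M = -p_0^2 (c_-/c_+) T_M = -p_0^2\frac{\alpha-1}{\alpha+1}T_M$ gives the stated formula for $R_M$, and $R = T_M + R_M - 1$ simplifies (the numerator collapses via $2\alpha(\alpha+1) - 2\alpha p_0^2(\alpha-1) - (\alpha+1)^2 + (\alpha-1)^2 p_0^2 = (\alpha^2-1)(1-p_0^2)$) to the stated $R$. Finally $T = p_0 T_M + p_0^{-1} R_M = p_0 T_M(1 - \frac{\alpha-1}{\alpha+1}) = \frac{2 p_0 T_M}{\alpha+1}$, which reduces to $4p_0\alpha\big[(1+\alpha^2)(1-p_0^2) + 2\alpha(1+p_0^2)\big]^{-1}$, matching both the lemma and the $\Sigma_3$ row of Table~\ref{table:res-PC} (note $p_0 = \e^{ik_0 L}$ there).

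There is essentially no conceptual obstacle here — the computation is a direct specialization of the $\Sigma_2$ argument with $\gamma$ replaced by $1$ in the propagation factor but with the permeability-induced $\alpha$ surviving in the impedance matching for $\hat H_3$. The one point requiring mild care is getting the $\alpha$-dependence right: it enters only through $\hat\mu$ (hence through the amplitude of $\hat H$ in $Q_M$, via the factor $k_3/(\omega\mu_0\alpha)$), not through the wavenumber, since $k_3 = k_0$ because $(\hat\varepsilon)_{22} = \alpha^{-1}$ exactly cancels the $\alpha$ from $\hat\mu$ in equation~\eqref{eq:Analysis-effective-equations-metal-plate-1-case-curlH=E-for-some-components}. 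Keeping track of which $\alpha$-power appears in which interface equation is the only place an error could creep in; once the four equations are written correctly, the rest is routine algebra of the type already displayed for $\Sigma_1$ and $\Sigma_2$.
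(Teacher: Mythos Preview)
Your proposal is correct and follows essentially the same approach as the paper: derive the four interface conditions from continuity of $\hat E_2$ and $\hat H_3$, then solve the resulting linear system. The only cosmetic difference is that the paper introduces an auxiliary variable $a$ with $\sqrt{a}=1/\alpha$ so as to reuse the $\Sigma_2$ formulas verbatim, whereas you work directly with $c_\pm = 1\pm\alpha^{-1}$; these are the same computation. (One small aside: your passing remark that $\mu_{\eff}=\diag(1,\alpha,1)$ is not quite right---the relevant entry is $(\mu_{\eff})_{33}=\alpha$, as you in fact use via the ansatz for $\hat H$ in $Q_M$---but this does not affect the argument.)
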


\begin{proof}
From~\eqref{eq:Analysis-the-general-effective-equations-1}
we deduce that $\curl \hat{E}$ has no singular part and hence the
tangential trace of $\hat{E}$ along the surfaces $\set{x \in G \given
  x_1 = 0}$ and $\set{x \in G \given x_1 = -L}$ does not jump. Thus
\begin{equation}\label{eq:Analysis-equations-for-transmission-and-reflection-coefficients-metal-plate-1}
   T_M + R_M= 1+R \quad \text{ and } \quad T = p_0T_M + \frac{1}{p_0} R_M\,.
\end{equation}
As $\hat{H}$ is parallel to $\e_3$, we deduce
from~\eqref{eq:Analysis-effective-equations-metal-plate-1-case-curlH=E-for-some-components}
that $\hat{H}_3$ does not jump across the surfaces $\set{x \in G
  \given x_1 = 0}$ and $\set{x \in G \given x_1 = -L}$. This implies
that
\begin{equation}\label{eq:Analysis-equations-for-transmission-and-reflection-coefficients-metal-plate-2}
  1 - R = \frac{1}{\alpha} \big(T_M - R_M \big) \quad \text{ and }
  \quad T = \frac{1}{\alpha}\Big( p_0 T_M - \frac{1}{p_0} R_M\Big) \, .
\end{equation}
Here we used that $k_0 = k_3 = \omega \sqrt{\varepsilon_0 \mu_0}$.
Note that $\alpha >0$ and hence we find $a >0$  such that $\sqrt{a} =
1/ \alpha$. With this new parameter $a$, the equations
in~\eqref{eq:Analysis-equations-for-transmission-and-reflection-coefficients-metal-plate-2}
read
\begin{equation}\label{eq:Analysis-equations-for-transmission-and-reflection-coefficients-metal-plate-3}
  \sqrt{a}(T_M-R_M) = 1-R \quad \text{ and } \quad
  T = \sqrt{a} \bigg(p_0 T_M - \frac{1}{p_0}R_M \bigg) \, .
\end{equation}
Thus the equations
in~\eqref{eq:Analysis-equations-for-transmission-and-reflection-coefficients-metal-plate-1}
and~\eqref{eq:Analysis-equations-for-transmission-and-reflection-coefficients-metal-plate-3}
have the same structure as the equations
in~\eqref{eq:Analysis-equations-for-transmission-and-reflection-coefficients-metal-plate-case-2-1}
and~\eqref{eq:eq:Analysis-equations-for-transmission-and-reflection-coefficients-metal-plate-case-2-2}. We
may therefore use the formulas for $R, T, R_M$, and $T_M$ derived in
Section~\ref{sec:metal-cylinder-case}. Note that
\begin{equation*}
  1+ \sqrt{a} = \frac{\alpha +1}{\alpha}\,  \quad 1- \sqrt{a} =
  \frac{\alpha - 1}{\alpha}\, , \quad \text{ and } \quad 1-a =
  \frac{\alpha^2 - 1}{\alpha^2}\, .
\end{equation*}
Thus
\begin{align*}
  T_M &= \frac{2 (1+ \sqrt{a})}{(1+ \sqrt{a})^2 - (1- \sqrt{a})^2p_0^2}
        = \frac{2 \alpha (\alpha +1)}{(\alpha + 1)^2 -
        (\alpha-1)^2p_0^2}
        = \frac{2 \alpha (\alpha +1)}{(1 + \alpha^2)(1-p_0^2) + 2
        \alpha (1+ p_0^2)}\,
        , \\
  R_M &= - \frac{2 p_0^2 (1- \sqrt{a})}{(1+ \sqrt{a}) - (1-
        \sqrt{a})p_0^2}
        = - \frac{2 \alpha p_0^2 (\alpha - 1)}{(\alpha + 1)^2 - (\alpha
        -1)^2p_0^2}
        = -\frac{2 \alpha p_0^2 (\alpha - 1)}{(1 + \alpha^2)(1-p_0^2) + 2
        \alpha (1+ p_0^2)}\, ,\\
  R &= \frac{(1-a)(1-p_0^2)}{(1 + \sqrt{a}) - (1- \sqrt{a})p_0^2}
      = \frac{(\alpha^2 - 1)(1-p_0^2)}{(\alpha +1)^2 -
      (\alpha-1)^2p_0^2}
      = \frac{(\alpha^2 - 1)(1- p_0^2)}{(1 + \alpha^2)(1-p_0^2) + 2
        \alpha (1+ p_0^2)}\, ,
      \shortintertext{and}
      T &= \frac{4 p_0\sqrt{\alpha}}{(1+ \sqrt{\alpha})^2 - (1-
          \sqrt{\alpha})^2p_0^2}
          = \frac{4 p_0 \alpha}{(\alpha+1)^2 - (\alpha-1)^2 p_0^2}
          = \frac{4 p_0 \alpha}{(1 + \alpha^2)(1-p_0^2) + 2
        \alpha (1+ p_0^2)}\, .
\end{align*}
This proves the claim.
\end{proof}

\subsubsection{The air cylinder}
\label{sec:air-cylinder}
We consider the microstructure $\Sigma_4$; that is, an air cylinder
with symmetry axis parallel to $\e_1$ (see Fig.~\ref{fig:the air
  cylinder}).
Combining the effective
equations~\eqref{eq:Analysis-the-general-effective-equations} with the
index sets in Table~\ref{tab:analysissummary}, we obtain the effective
system for this case:
\begin{subequations}
  \begin{empheq}[left=\empheqlbrace]{alignat=2}
    \Eeff & = 0 && \text{ in } Q_M \,
    , \label{eq:Analysis-effective-equations-air-cylinder-E=0}\\
    \Heff_2 & = \Heff_3  = 0 && \text{ in } Q_M \, . \label{eq:Analysis-effective-equations-air-cylinder-H=0-for-some-components}
  \end{empheq}
\end{subequations}
As in the previous sections, we choose the following ansatz for the
effective fields $\Eeff, \Heff \colon G \to \mathbb{C}^3$,
\begin{equation*}
  \Eeff(x) \coloneqq \big( \e^{-i k_0 x_1} + R \e^{i k_0 x_1}
  \big)\e_2 \quad \text{ and } \quad \Heff(x) =-  \big( \e^{i k_0 x_1} - R
  \e^{-i k_0 x_1}\big) \e_3 \quad
  \text{ for } x \in  Q_R\, .
\end{equation*}
Equation~\eqref{eq:Analysis-the-general-effective-equations-2}
determines the wave number and we find that $k_0 = \omega \sqrt{\varepsilon_0 \mu_0}$. The effective
electric field $\Eeff$ vanishes in the meta-material $Q_M$ and hence,
by~\eqref{eq:Analysis-the-general-effective-equations-1}
and~\eqref{eq:Analysis-effective-equations-air-cylinder-H=0-for-some-components},
there is also no effective magnetic field in $\Q_M$. So $\Eeff =
\Heff = 0$ in
$Q_M$. Equation~\eqref{eq:Analysis-the-general-effective-equations-1}
implies that the tangential trace of $\Eeff$ does not jump across the
surface $\set{ x \in G \given x_1 =0}$. Thus
\begin{equation*}
   R = -1\, .
\end{equation*}
As no field is transmitted through the meta-material $Q_M$, there is
neither an electric nor an magnetic field in $Q_L$. In other words,
$\Eeff = \Heff = 0$ in $Q_L$. We have thus shown that
\begin{equation*}
   R = -1 \quad \text{ and } \quad T = 0\, .
\end{equation*}

\subsection{Vanishing limiting fields in high-contrast media,
  2D-analysis}
\label{sec:high-contrast-media}

In this section, we perform an analysis of high-contrast media. Of the
four geometries $\Sigma_1$ to $\Sigma_4$, we study the two
$\e_3$-invariant geometries: the metal cylinder $\Sigma_1$ and the
metal plate $\Sigma_3$, compare Fig.~\ref
{fig:Two-examples-of-perfect-conductors-for-analysis}. We analyze the
time-harmonic Maxwell equations \eqref{eq:time-harmonic-Maxwell-eq}
with the high-contrast permittivity $\varepsilon_\eta$ of \eqref
{eq:Analysis-high-contrast-permittivity}.  We recall that the sequence
of solutions $(E^\eta, H^\eta)_\eta$ is assumed to satisfy the
$L^2(G)$-bound \eqref
{eq:Analysis-high-contrast-boundedness-assumption-on-fields}.  We are
interested in the limit behaviour of $(E^{\eta}, H^{\eta})_\eta$ as
$\eta \to 0$.

When we consider perfect conductors, the effective equations \eqref
{eq:Analysis-the-general-effective-equations} imply that some
components of $E^\eta$ or $H^\eta$ converge weakly to $0$ in the
meta-material $Q_M$. For media with high-contrast, we do not have such
a result (we recall that homogenization usually considers compactly
contained geometries $\Sigma\subset Y$).  In this section we ask for
$\Sigma_1$ and $\Sigma_3$: do the electric fields $(E^\eta)_{\eta}$
converge weakly in $L^2(Q_M; \mathbb{C}^3)$ to $0$ as $\eta \to 0$? Is
this weak convergence in fact a strong convergence? The same questions
are considered for the magnetic fields $(H^\eta)_\eta$.

Let us point out that $E^\eta\cdot \mathds{1}_{\Sigma_\eta} \to 0$ in
$L^2(Q_M; \mathbb{C}^3)$. Indeed, the $L^2$-estimate \eqref
{eq:Analysis-high-contrast-boundedness-assumption-on-fields} can be
improved to
\begin{equation}
  \label{eq:Analysis-high-contrast-improved-boundedness-of-fields}
  \sup_{\eta >0} \int_G \Big(\abs{\varepsilon_\eta}\, \abs{E^\eta}^2 +
  \abs{H^\eta}^2\Big) < \infty\, ,
\end{equation}
as was shown in \cite[Section 3.1]{BS10splitring}. Thus
\begin{equation}
  \label{eq:E-eta-in-Sigma-eta}
  \frac{\varepsilon_1}{\eta^2} \int_{\Sigma_\eta} \abs{E^\eta}^2 = \int_G
  \abs{\varepsilon_\eta} \,
  \abs{E^\eta}^2 \mathds{1}_{\Sigma_\eta}
  \leq \int_G \Big(\abs{\varepsilon_\eta}\, \abs{E^\eta}^2 +
  \abs{H^\eta}^2\Big) \leq C\, .
\end{equation}
So we have that $\norm{L^2(G)}{E^\eta \mathds{1}_{\Sigma_\eta}}^2 \leq
\eta^2 C$ which implies that $E^\eta \mathds{1}_{\Sigma_\eta} \to 0$
in $L^2(Q_M; \mathbb{C}^3)$ as $\eta \to 0$.

We recall that the two geometries of interest are
$x_3$-independent. We therefore consider two different cases: In
Section \ref{sec:parall-electr-field}, we study electric fields
$E^\eta$ that are parallel to $\e_3$.  In Section
\ref{sec:parall-magn-field}, we study magnetic fields $H^\eta$ that
are parallel to $\e_3$. By linearity of the equations, superpositions
of these two cases provide the general behaviour of the material.

We will assume that the fields are $x_3$-independent. This is a strong
assumption, which can be justified for $x_3$-independent incoming
fields with a uniqueness property of solutions. In the rest of this section
the fields $E^\eta(x)$ and $H^\eta(x)$ depend only on $(x_1, x_2)$.

\smallskip \textbf{Results for high-contrast media.} In the
$E$-parallel setting, the electric fields $(E^\eta)_{\eta}$ converge
strongly to $0$ in $L^2(Q_M; \mathbb{C}^3)$, the magnetic fields
converge weakly to $0$ in $L^2(Q_M; \mathbb{C}^3)$. On the other hand,
when the magnetic fields $H^\eta$ are parallel to $\e_3$, we can
neither expect the electric fields nor the magnetic fields to converge
weakly to $0$ in $L^2(Q_M; \mathbb{C}^3)$.

\subsubsection{Parallel electric field}
\label{sec:parall-electr-field}

We consider here the case of parallel electric fields, i.e.,
$E^\eta(x) \coloneqq (0, 0, u^\eta(x))$ with $u^\eta = u^\eta(x_1,
x_2)$. By abuse of notation, we will consider $G$ also as a domain in
$\R^2$ and write $(x_1, x_2)\in G$ when $(x_1, x_2,0)\in G$; similarly
for $\Sigma_\eta$. In this setting, the magnetic field $H^\eta$ has no
third component, $H^\eta(x) = (H_1^\eta(x_1, x_2), H_2^\eta(x_1, x_2),
0)$, and Maxwell's equations \eqref{eq:time-harmonic-Maxwell-eq}
reduce to the two-dimensional system
\begin{subequations}
  \label{Analysis-high-contrast-two-dimensional-system-of-Maxwell}
  \begin{empheq}[left=\empheqlbrace]{alignat=2}
    - \nabla^{\perp} u^\eta &
    =\phantom{-}  i \omega \mu_0 (H_1^\eta, H_2^\eta) &&
    \text{ in } G \, , \\
    \nabla^{\perp}\cdot (H^\eta_1, H^\eta_2) & =- i \omega
    \varepsilon_0 \varepsilon_\eta u^\eta && \text{ in } G \, ,
  \end{empheq}
\end{subequations}
where we used the two-dimensional orthogonal gradient,
$\nabla^{\perp}u \coloneqq (- \partial_2 u, \partial_1 u)$, as well as
the two-dimensional curl, $\nabla^{\perp} \cdot (H_1, H_2) \coloneqq
-\partial_2 H_1 +
\partial_1 H_2$. The
system~\eqref{Analysis-high-contrast-two-dimensional-system-of-Maxwell}
can equivalently be written as a scalar
Helmholtz equation
\begin{equation}\label{eq:Analysis-high-contrast-E-parallel-scalar-Helmholtz}
  - \Delta u^\eta = \omega ^2 \varepsilon_0 \mu_0 \varepsilon_\eta u^\eta
  \quad \text{ in } G \subset \R^2\, .
\end{equation}
A solution of this Helmholtz equation provides the fields in the form
$E^\eta = (0 ,0 ,u^\eta)$ and $H^\eta = i (\omega \mu_0)^{-1}
(\nabla^{\perp}u^\eta ,0)$.

\begin{lemma}[Trivial limits for $E^\eta \parallel \e_3$]
  For $\eta > 0$ small, let $\Sigma_\eta \subset G \subset \R^2$ be
  a microscopic geometry that is given by $\Sigma_1$ or $\Sigma_3$, and let
  the permittivity $\varepsilon_\eta \colon G \to \mathbb{C}$ be
  defined by \eqref {eq:Analysis-high-contrast-permittivity}. Let
  $E^\eta, H^\eta \colon G \to \mathbb{C}^3$ be solutions with
  $E^\eta(x) = (0,0,u^\eta(x_1,x_2))$ that satisfy the estimate \eqref
  {eq:Analysis-high-contrast-boundedness-assumption-on-fields}.  Then
  \begin{equation*}
    E^\eta \to 0  \quad \text{ and } \quad H^\eta \weakto 0
    \quad\text{ in } L^2(Q_M) \text{ as } \eta \to 0\, .
  \end{equation*}
\end{lemma}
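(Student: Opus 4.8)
The plan is to pass everything through the scalar Helmholtz reformulation \eqref{eq:Analysis-high-contrast-E-parallel-scalar-Helmholtz} and to exploit two a priori bounds: the assumed $L^2(G)$-bound \eqref{eq:Analysis-high-contrast-boundedness-assumption-on-fields} together with the improved bound \eqref{eq:Analysis-high-contrast-improved-boundedness-of-fields}. Since $H^\eta = i(\omega\mu_0)^{-1}(\nabla^{\perp}u^\eta,0)$, boundedness of $(H^\eta)_\eta$ in $L^2(G)$ is the same as boundedness of $(\nabla u^\eta)_\eta$ in $L^2(G;\mathbb{C}^2)$; combined with the bound on $u^\eta=E^\eta_3$ this gives a uniform bound of $(u^\eta)_\eta$ in $H^1(Q_M)$ (note that $Q_M$ is bounded, as $G$ has bounded cross-section). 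Hence, along a subsequence, $u^\eta \weakto u$ in $H^1(Q_M)$ and, by Rellich's theorem, $u^\eta \to u$ strongly in $L^2(Q_M)$.

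The core step is to show $u=0$ on $Q_M$. From \eqref{eq:E-eta-in-Sigma-eta} one has $\|u^\eta\|_{L^2(\Sigma_\eta)}^2 \le C\eta^2$, so $u^\eta\mathds{1}_{\Sigma_\eta}\to 0$ in $L^2(Q_M)$. On the other hand, by $\eta$-periodicity of the microstructure, $\mathds{1}_{\Sigma_\eta} \weakto \theta$ weak-$*$ in $L^\infty(Q_M)$, where $\theta \coloneqq |\Sigma^2|>0$ is the area of the two-dimensional cross-section $\Sigma^2\subset\mathbb{R}^2$ of $\Sigma$ (the cells cut off by $\partial Q_M$ form a layer of measure $O(\eta)$ and do not affect the limit). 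Testing this weak-$*$ convergence against $|u|^2\in L^1(Q_M)$ and using the strong $L^2(Q_M)$-convergence $u^\eta\to u$ yields $\int_{\Sigma_\eta}|u^\eta|^2 \to \theta\int_{Q_M}|u|^2$; together with $\int_{\Sigma_\eta}|u^\eta|^2\to 0$ this forces $\theta\int_{Q_M}|u|^2=0$. For the two geometries under consideration $\theta$ is strictly positive --- explicitly $\theta=\pi r^2$ for $\Sigma_1$ and $\theta=2r$ for $\Sigma_3$ --- so $u=0$ on $Q_M$.

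It remains to read off the conclusions. Strong convergence $u^\eta\to u=0$ in $L^2(Q_M)$ gives $E^\eta=(0,0,u^\eta)\to 0$ in $L^2(Q_M;\mathbb{C}^3)$. Since $u\equiv 0$ on the open set $Q_M$, also $\nabla u\equiv 0$ there, so weak convergence in $H^1(Q_M)$ gives $\nabla^{\perp}u^\eta\weakto 0$ in $L^2(Q_M;\mathbb{C}^2)$, i.e.\ $H^\eta\weakto 0$ in $L^2(Q_M;\mathbb{C}^3)$. As these limits are independent of the chosen subsequence, the full sequences converge. The one genuinely delicate point is the identification $u|_{Q_M}=0$: this is where the geometry of $\Sigma_1$ and $\Sigma_3$ enters, via positivity of the cross-sectional area $\theta$, and one has to handle $u^\eta\mathds{1}_{\Sigma_\eta}$ with care, since it converges strongly to $0$ while simultaneously having the weak limit $\theta u$ (a product of a strongly convergent and a weak-$*$ convergent sequence). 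The remaining ingredients --- the $H^1$-bound, Rellich compactness, and passing from a subsequence to the full sequence --- are routine, and the Helmholtz equation itself enters only through the a priori bound \eqref{eq:Analysis-high-contrast-improved-boundedness-of-fields} quoted from \cite{BS10splitring}.
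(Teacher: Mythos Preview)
Your proof is correct and follows essentially the same approach as the paper: establish an $H^1$-bound on $u^\eta$ from the $L^2$-bounds on $E^\eta$ and $H^\eta$, extract a strongly $L^2$-convergent subsequence via Rellich, and then use the smallness of $u^\eta$ on $\Sigma_\eta$ (from \eqref{eq:E-eta-in-Sigma-eta}) together with weak convergence of indicator functions to force $u=0$ on $Q_M$. The only cosmetic difference in the core step is that the paper decomposes $u^\eta\mathds{1}_{Q_M}=u^\eta\mathds{1}_{Q_M\setminus\Sigma_\eta}+u^\eta\mathds{1}_{\Sigma_\eta}$ and uses $\mathds{1}_{Q_M\setminus\Sigma_\eta}\weakto\alpha$ to obtain the linear identity $u=\alpha u$ (hence $u=0$ since $\alpha\neq 1$), whereas you pair $\mathds{1}_{\Sigma_\eta}\stackrel{*}{\weakto}\theta=1-\alpha$ against $|u^\eta|^2\to|u|^2$ in $L^1$ to obtain $\theta\int_{Q_M}|u|^2=0$ directly; these are two sides of the same coin.
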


\begin{proof}
  The $L^2$-boundedness of $E^\eta$ implies the $L^2$-boundedness of
  $u^\eta$, and the $L^2$-boundedness of $H^\eta$ implies the
  $L^2$-boundedness of $\nabla u^\eta$. Therefore, the sequence
  $(u^\eta)_{\eta}$ is bounded in $H^1(G)$, and we find a limit
  function $u \in H^1(G)$ such that $u^\eta \weakto u$ in $H^1(G)$ and
  $u^\eta \to u$ in $L^2(G)$ as $\eta \to 0$.

  We write
  \begin{equation}\label{eq:strong-convergence-in-microstructure}
    u^\eta \mathds{1}_{Q_M}
    = u^\eta \mathds{1}_{Q_M\setminus \Sigma_\eta}
    + u^\eta \mathds{1}_{\Sigma_\eta}\,.
  \end{equation}
  The left hand side converges strongly to $u \mathds{1}_{Q_M}$.  The
  first term on the right hand side of \eqref
  {eq:strong-convergence-in-microstructure} is the product of a
  strongly $L^2(Q_M)$-convergent sequence and a weakly
  $L^2(Q_M)$-convergent sequence: $\mathds{1}_{Q_M \setminus \Sigma_\eta} \weakto
  \alpha$ in $L^2(Q_M)$, where $\alpha \in (0,1)$ is the volume fraction
  of $Y \setminus \Sigma$. We find that the first term on the right hand side
  converges in the sense of distributions to $\alpha u$.  The estimate
  \eqref {eq:E-eta-in-Sigma-eta} provides the strong convergence of
  the second term on the right hand side of \eqref
  {eq:strong-convergence-in-microstructure} to zero.  The
  distributional limit of \eqref
  {eq:strong-convergence-in-microstructure} provides
  \begin{equation}\label{eq:strong-convergence-in-microstructure-limit}
    u \mathds{1}_{Q_M} = \alpha u \mathds{1}_{Q_M} + 0\,,
  \end{equation}
  and hence $u=0$, since $\alpha \neq 0$. We have therefore found
  \begin{equation*}
    E^\eta = (0, 0, u^\eta) \to 0 \quad \text{ and } \quad H^\eta =
    (\nabla^{\perp} u^\eta, 0) \weakto 0 \quad \text{ in }L^2(Q_M;
    \mathbb{C}^3) \text{ as } \eta \to 0\,,
  \end{equation*}
  which was the claim.
\end{proof}

\subsubsection{Parallel magnetic field}
\label{sec:parall-magn-field}

We now consider a magnetic field that is parallel to $\e_3$,
$H^\eta(x) = (0, 0, u^\eta(x_1, x_2))$, with all quantities being
$x_3$-independent. This $H$-parallel case is the interesting case for
homogenization and it has the potential to generate magnetically
active materials. It was analyzed e.g.\,in \cite {BBF09hommaxwell,
  BBF15hommaxwell, BS13plasmonwaves, BF97homfibres}.  In this setting,
Maxwell's equations \eqref{eq:time-harmonic-Maxwell-eq} reduce to
\begin{subequations}
  \label{eq:Analysis-high-contrast-two-dimensional-Maxwell-system-parallel-H}
  \begin{empheq}[left=\empheqlbrace]{alignat=2}
    \nabla^{\perp} \cdot (E_1^\eta, E_2^\eta) & = \phantom{-} i \omega
    \mu_0 u^\eta &&\quad \text{ in } G \, , \label{eq:Analysis-high-contrast-two-dimensional-Maxwell-system-parallel-H-1}\\
    - \nabla^{\perp}u^\eta & = - i \omega \varepsilon_0
    \varepsilon_\eta (E_1^\eta, E_2^\eta) &&\quad \text{ in } G\, .
  \end{empheq}
\end{subequations}
System \eqref
{eq:Analysis-high-contrast-two-dimensional-Maxwell-system-parallel-H}
can equivalently be written as a scalar Helmholtz equation:
\begin{equation}
  \label{eq:Analysis-high-contrast-scalar-Helmholtz-parallel-H}
  - \nabla \cdot \bigg( \frac{1}{\varepsilon_\eta} \nabla u^\eta\bigg) =
  \omega^2 \varepsilon_0 \mu_0 u^\eta \quad \text{ in } G\, .
\end{equation}
In \eqref {eq:Analysis-high-contrast-E-parallel-scalar-Helmholtz}, the
high-contrast coefficient is outside the differential operator, which
induces a trivial limit behaviour of solutions.  Instead, \eqref
{eq:Analysis-high-contrast-scalar-Helmholtz-parallel-H} has the
high-contrast coefficient inside the differential operator, which
leads to a much richer behaviour of solutions.

The case $\Sigma = \Sigma_3$ is the metal plate (see Fig.~%
\ref{fig:the metal plate}) that was studied in
\cite{BS13plasmonwaves}.  The result of~\cite{BS13plasmonwaves} is the derivation of a
limit system with nontrivial solutions. In particular, the weak limit
of $(u^\eta)_{\eta}$ can be non-trivial. Similar results are available
for metallic wires $\Sigma = \Sigma_1$ (see Fig.~\ref{fig:the metal
  cylinder}); the results of \cite {BBF15hommaxwell} imply that also
in this case the weak limit of $(u^\eta)_{\eta}$ can be non-trivial.

We therefore observe that the $H$-parallel case does not allow to conclude
$H^\eta \weakto 0$ in $L^2(Q_M; \mathbb{C}^3)$. We note that
$H^\eta =(0, 0, u^\eta)\not\weakto 0$ implies, by boundedness of the
magnetic field and equation~\eqref{eq:Analysis-high-contrast-two-dimensional-Maxwell-system-parallel-H-1}, also
$E^\eta\not \weakto 0$.

\section{Finite element based multiscale approximation}
\label{sec:numerics}

In this section, we present numerical multiscale methods that are used
to study Maxwell's equations in high-contrast media from a numerical point of view.
We introduce the necessary notation for finite element discretisations and briefly
discuss the utilized approaches.  Based on these methods, numerical
experiments illustrating the transmission properties of the
microstructures are presented in Section \ref{subsec:experiment}.

\subsection{Variational problem for the second order formulation}
We study time-harmonic Maxwell's
equations in their second-order formulation for the magnetic field $H$
\eqref{eq:time-harmonic-Maxwell-eq-H}.  The macroscopic domain $\tilde G$ is
assumed to be bounded and we impose impedance boundary conditions on
the Lipschitz boundary $\partial \tilde G$ with the outer unit normal $n$:
\[\curl H\times n-ik_0(n \times H)\times n=g,\]
where $g\in L^2(\partial \tilde G)$ with $g\cdot n=0$ is given and
$k_0=\omega\sqrt{\varepsilon_0\mu_0}$ is the wavenumber.  These
boundary conditions can be interpreted as first-order approximation to
the Silver-M\"uller radiation conditions (used for the full space
$\rz^3$); the data $g$ are usually computed from an incident wave.
For the material parameters, we choose $\mu=1$ and $\varepsilon_\eta$
as specified in \ref{eq:Analysis-high-contrast-permittivity}.
Multiplying with a test function and integrating by parts results in
the following variational formulation: Find $H^\eta\in H_{\imp}(\tilde G)$
such that
\begin{align}
  \label{eq:numerics-maxwell-hfield-pde}
  \int_G\varepsilon_\eta^{-1}\curl H^\eta\cdot \curl \psi
  -k_0^2H^\eta\cdot \psi\, dx-ik_0\int_{\partial G}H^\eta_T\cdot
  \psi_T\, d\sigma=\int_{\partial G}g\cdot \psi_T\qquad \forall
  \psi\in H_{\imp}(\tilde G),
\end{align}
where $H_{\imp}(\tilde G):=\{v\in L^2(\tilde G; \cz^3)|\curl v\in L^2(\tilde G;
\cz^3),\,v_T\in L^2(\partial \tilde G)\}$ and $v_T:=v-(v\cdot n)n$ denotes
the tangential component.  Existence and uniqueness of the solution to
this problem for fixed $\eta$ is shown, for instance, in \cite{Monk}.

\subsection{Traditional finite element discretisation}  The standard finite
element discretisation of \eqref{eq:numerics-maxwell-hfield-pde} is a
Galerkin procedure with a finite-dimensional approximation space
$V_h\subset H_{\imp}(\tilde G)$ which consists of piecewise polynomial
functions on a (tetrahedral) mesh of $\tilde G$.  In detail, we denote by
$\CT_h=\{ T_j|j\in J\}$ a partition of $\tilde G$ into tetrahedra.  We assume
that $\CT_H$ is regular (i.e., no hanging nodes or edges occur), shape
regular (i.e., the minimal angle stays bounded under mesh refinement),
and that it resolves the partition into the meta-material $Q_M$ and
air $\tilde G\setminus \overline{Q}_M$.  To allow for such a partition, we
implicitly assume $\tilde G$ and $Q_M$ to be Lipschitz polyhedra.  Otherwise,
boundary approximations have to used which only makes the following
description more technical.  We define the local mesh size
$h_j:=\diam(T_j)$ and the global mesh size $h:=\max_{j\in J}h_j$.  As
conforming finite element space for $H_{\imp}(G)$, we use the lowest
order edge elements introduced by N\'ed\'elec, i.e.,
\[V_h:=\{v_h\in H_{\imp}(\tilde G)|v_h|_K(x)=a+b\times x \text{ with }a, b\in
\cz^3, \; \forall K\in \CT_h\}.\] It is well known (see \cite{Monk},
for instance), that the finite element method with this test and trial
space in \eqref{eq:numerics-maxwell-hfield-pde} yields a well-posed
discrete solution $H_h$.  Furthermore, the following a priori error
estimate holds
\[\|H^\eta-H_h\|_{H(\curl)}\leq Ch (\|H^\eta\|_{H^1(\tilde G)}+\|\curl
H^\eta\|_{H^1(\tilde G)}).\] For the setting of
\eqref{eq:numerics-maxwell-hfield-pde} as discussed in this paper,
however, two major problems arise.  First, due to the discontinuities
of the electric permittivity $\varepsilon^{-1}_\eta$ the necessary
regularity of $H^\eta$ is not available, see
\cite{BGL13regularitymaxwell, CDN99maxwellinterface,
  CD00maxwellsingularities}.  Second, even in the case of sufficient
regularity the right-hand side of the error estimate experiences a
blow-up with $\|H^\eta\|_{H^1(\tilde G)}+\|\curl H^\eta\|_{H^1(\tilde G)}\to \infty$
for $\eta\to 0$.  In other words, a typical solution of
\eqref{eq:numerics-maxwell-hfield-pde} is subject to (strong)
oscillations in $\eta$ such that its derivative does not remain
bounded for the periodicity length tending to zero.  As a consequence,
the error estimate has a $\eta$-dependent right-hand side of the type
$h\eta^{-1}$.  Therefore, convergence of standard finite element
discretisations is only to be expected in the asymptotic regime when
$h\ll\eta$, i.e., the mesh has to resolve the oscillations in the PDE
coefficients.  As discussed in the introduction, this can become
prohibitively expensive.

\subsection{Heterogeneous Multiscale Method}  As a remedy to these
limitations of the standard finite element method, we consider a
specific multiscale method.  The idea is to extract macroscopic
properties of the solution with $\eta$-independent complexity or
computational effort, respectively.  The basic idea directly
comes up from the effective equation
\eqref{eq:Analysis-high-contrast-effective-eq-H}: Since this effective
equation is independent of $\eta$, it can be discretised on a rather
coarse mesh $\CT_h$ without the need to resolve the $\eta$-scale,
i.e., we can have $h>\eta$.  This results in an approximation of the
homogenized solution $\hat{H}$, which contains important macroscopic
information of $H^\eta$.  However, for the discretisation of the
homogenized equation, the effective material parameters
$\hat{\varepsilon}$ and $\hat{\mu}$ need to be known, at least at the
(macroscopic) quadrature points.  This can be achieved by introducing
another, again $\eta$-independent, mesh $\CT_{h_Y}=\{S_l|l\in I\}$ of
the unit cube $Y$ with maximal mesh size $h_Y=\max_{l\in
  I}\diam(S_l)$.  We assume that $\CT_{h_Y}$ is regular and shape
regular and resolves the partition of $Y$ into $\Sigma$ and
$Y\setminus \overline{\Sigma}$.  Furthermore, $\CT_{h_Y}$ has to be
periodic in the sense that it can be wrapped to a regular
triangulation of the torus, i.e., no hanging nodes or edges over the
periodic boundary.  Note that $h_Y$ denotes the mesh size of the
triangulation of the unit cube. Thus, it is in no way related to
$\eta$ and can be of the same order as $h$.  Based on this mesh, the
cell problems occurring in the definition of $\hat{\varepsilon}$ and
$\hat{\mu}$ can be discretised with standard (Lagrange and
N\'ed\'elec) finite element spaces.  For details we refer to
\cite{Ver17hmmmaxwell}.  All in all, we can now compute the
homogenized solution $\hat{H}$ of
\eqref{eq:Analysis-high-contrast-effective-eq-H} as follows:
1. Compute discrete solutions of the cell problems (see
\cite{BBF15hommaxwell} or\cite{Ver17hmmmaxwell}) using the mesh
$\CT_{h_Y}$ and the associated standard finite element
spaces. 2. Compute the effective parameters $\hat{\varepsilon}$ (or
$\widehat{\varepsilon^{-1}}$) and $\hat{\mu}$ approximatively with the
discrete cell problems solutions. 3. Compute the discrete homogenized
solution of \eqref{eq:Analysis-high-contrast-effective-eq-H} with the
approximated effective coefficients and using the mesh $\CT_h$ with
the associated finite element space $V_h$ as introduced above.

This (naive) discretisation scheme for the effective equation
\eqref{eq:Analysis-high-contrast-effective-eq-H} in fact can be
interpreted as a specification of the Heterogeneous Mutliscale Method
(HMM) in the perfectly periodic case.  The Finite Element
Heterogeneous Multiscale Method, introduced by E and Enguist
\cite{EE03hmm, EE05hmm}, sets up a macroscopic sesquilinear form to
compute the HMM solution $H_h$, which is an approximation of the
homogenized solution $\hat{H}$.  The macroscopic sesquilinear form is
very similar to the effective sesquilinear form associated with the
left-hand side of \eqref{eq:Analysis-high-contrast-effective-eq-H},
but the effective material parameters are not computed a priori.
Instead local variants of the cell problems are set up on
$\eta$-scaled cubes $Y_j^\eta=\eta Y+x_j$ around macroscopic
quadrature points $x_j$.  We can still use the mesh $\CT_{h_Y}$ of the
unit cube $Y$ and transform it to a partition $\CT_{h_Y}(Y_j^\eta)$ of
the scaled unit cell $Y_j^\eta$.  Similarly, also the finite element
spaces associated with $\CT_{h_Y}$ can be transferred to spaces on
$\CT_{h_Y}(Y_j^\eta)$ using a suitable affine mapping.  The finescale
computations result in so called local reconstructions, which consist
of the macroscopic basis functions and the corresponding (discrete)
cell problem solutions.  Averages (over $Y_j^\eta$) of these local
reconstructions then enter the macroscopic sesquilinear form.  A
detailed definition of the HMM for Maxwell's equations in
high-contrast media is presented in \cite{Ver17hmmmaxwell}, where also
the connection to analytical homogenization as well as the possibility
to treat more general than purely periodic problems are discussed.  We
only want to emphasize one important feature of the HMM in
\cite{Ver17hmmmaxwell}: Apart from the (macroscopic) approximation
$H_h$, discrete correctors $H_{h_Y, 1}$, $H_{h_Y, 2}$, and $H_{h_Y,
  3}$ can be determined from the discrete cell problems (in a second
post-processing step).  Via these correctors, we can define the zeroth
order $L^2$-approximation $H^0_{\HMM}:=H_h+\nabla_y H_{h_Y, 2}(\cdot,
\frac{\cdot}{\delta})+H_{h_Y,3}(\cdot, \frac{\cdot}{\delta})$, which
corresponds to the first term of an asymptotic expansion and is used
to approximate the true solution $H^\eta$.  We again refer to
\cite{Ver17hmmmaxwell} for details and note that it has been observed
in several numerical examples that these correctors are a vital part
of the HMM-approximation, see \cite{HOV15maxwellHMM, GHV17lodcurl,
  OV16hmmhelmholtz, Ver17hmmmaxwell}.  We close by remarking that in
Section \ref{subsec:experiment} below, we extend the described HMM of
\cite{Ver17hmmmaxwell} to general microstructures although the
validity of the homogenized models in these cases is not shown so far,
see the discussion in Sections \ref{sec:homogenization} and
\ref{sec:high-contrast-media}.

\section{Numerical study of transmission properties for high-contrast
  inclusions}
\label{subsec:experiment}

In this section, we numerically study the transmission properties in
the case of high-contrast for the three micro-geometries: the metal
cylinder, the metal plate, and the air cylinder.  Since the aim of
this paper is a better understanding of the different microstructures
and their effect, we focus on the qualitative behaviour rather than
explicit convergence rates.  The implementation was done with the
module {\sffamily dune-gdt} \cite{wwwdunegdt} of the DUNE software
framework \cite{BB+08dune1, BB+08dune2}.

\smallskip \textbf{Setting.}  We consider Maxwell's equations in the
second-order formulation for the $H$-field
\eqref{eq:numerics-maxwell-hfield-pde} with a high-contrast medium as
defined in \eqref{eq:Analysis-high-contrast-permittivity}.  It remains
to specify the macroscopic geometry, the boundary date $g$, the
material parameter $\eps_1$, and the frequency.  We use a slab-like
macroscopic geometry similar to Section \ref{sec:geometry}, but we
truncate $G$ also in the $x_1$-direction to have a finite
computational domain $\tilde G$, as described in the previous section.
We choose $\tilde G=(0,1)^3$ with the meta-material located in
$Q_M=\{x\in G|0.25\leq x_1\leq 0.75\}$.  Note that $Q_M$ is translated
in $x_1$-direction compared to Section \ref{sec:geometry}, but this
does not influence the qualitative results of the analysis.  As in
Section \ref{sec:geometry}, we assume that an incident wave
$H_{\mathrm{inc}}$ from the right travels along the $x_1$-axis to the
left, i.e., $H_{\mathrm{inc}}=\exp(-ikx_1)p$ with a normalized
polarization vector $p\perp \Ve_1$.  This incident wave is used to
compute the boundary data $g$ as $g=\curl H_{\mathrm{inc}}\times
n-ik_0 n\times (H_{\mathrm{inc}}\times n)$.  We choose the inverse
permittivity as $\varepsilon_1^{-1}=1.0-0.01i$ and note that
$\varepsilon_1$ is only slightly dissipative.  In all experiments, we
choose the same wavenumber $k_0=12$ and the periodicity parameter
$\eta=1/8$.

As explained in the previous section, we want to use the Heterogeneous
Multiscale Method to obtain good approximations with reasonable
computational effort.  We use the mesh sizes $h_y=h=\sqrt{3}\cdot
1/16$ and compute the macroscopic HMM-approximation $H_h$ as well as
the zeroth order approximation $H_{\HMM}^0$, which also utilizes
information of the discrete correctors.  To demonstrate the validity
of the HMM, we use two different reference solutions.  First, the
homogenized reference solution $\Heff$ is computed as solution to
\eqref{eq:Analysis-high-contrast-effective-eq-H} on a mesh with size
$h=\sqrt{3}\cdot 1/48$, where the effective material parameters are
calculated approximatively using a discretisation of the unit cube
with mesh size $h_Y=\sqrt{3}\cdot 1/24$.  Second, the (true) reference
solution $H^\eta$ is computed as direct finite element discretisation
of \eqref{eq:numerics-maxwell-hfield-pde} on a fine grid with
$h_{\mathrm{ref}}=\sqrt{3}\cdot 1/64$.

\begin{table}
  \caption{Summary of analytical predictions of the transmission properties
    and references to numerical results. The first row provides the geometry.
    The second row indicates possible transmission polarizations (of $H$) according
    to the theory of perfect conductors of Section
    \ref {sec:Calculation-of-transmission-and-reflection-coefficient}.
    The third row indicates the possibility of transmission based on
    Section \ref {sec:high-contrast-media}: We mention cases in which we
    cannot derive weak convergence to $0$. An entry \enquote{-} indicates that no analytical
    result can be applied.  The last row provides the reference
    to the  visualization of the numerical calculation for high-contrast media.}
\label{tab:numsummary}
\centering
\begin{tabular}{@{}ccccc@{}}
  \toprule
  geometry
  &metal cylinder $\Sigma_1$&metal cylinder $\Sigma_2$&metal plate $\Sigma_3$
  &air cyl. $\Sigma_4$\\
  \midrule
  transmission (PC)& $\Ve_3$-polarized&$\Ve_2$ and $\Ve_3$-polarized &
                                                                       $\Ve_3$-polarized&no\\
  \midrule
  nontriv. limit (HC)& $\Ve_3$-polarized&- &$\Ve_3$-polarized&-\\
  \midrule
  numerical example& Fig.~\ref{fig:Sigma1-homrefsol}& Fig.~\ref{fig:Sigma2-homrefsol}
  & Fig.~\ref{fig:Sigma3-homrefsol} & Fig.~\ref{fig:Sigma4-refsol}\\
  \bottomrule
\end{tabular}
\end{table}

\smallskip \textbf{Main results.}  Before we discuss the examples in
detail, we present an overview of the results.  The qualitative
transmission properties of the meta-material are in good agreement
with the theory of Section
\ref{sec:Calculation-of-transmission-and-reflection-coefficient},
although the numerical examples consider high-contrast media instead
of perfect conductors.  The predictions and the corresponding
numerical examples are summarized in Table \ref{tab:numsummary}.  In
contrast to perfect conductors, the high-contrast medium leads to
rather high intensities and amplitudes of the $H$-field inside the
inclusions $\Sigma_\eta$.  Depending on the chosen wavenumber,
Mie-resonances inside the inclusions can occur for high-contrast
media; see Section \ref{sec:high-contrast-media} and
\cite{BBF15hommaxwell, Ver17hmmmaxwell}.  Our numerical experiments
also show that the HMM yields (qualitatively) good approximations,
although the validity of the underlying effective models is not proved
for the studied geometries.

\subsection{Metal cuboids $\tilde \Sigma_1$ and $\tilde \Sigma_2$}

\begin{figure}[t]
\includegraphics[width=0.44\textwidth, trim= 75mm 33mm 51mm 32mm, clip=true, keepaspectratio=false]{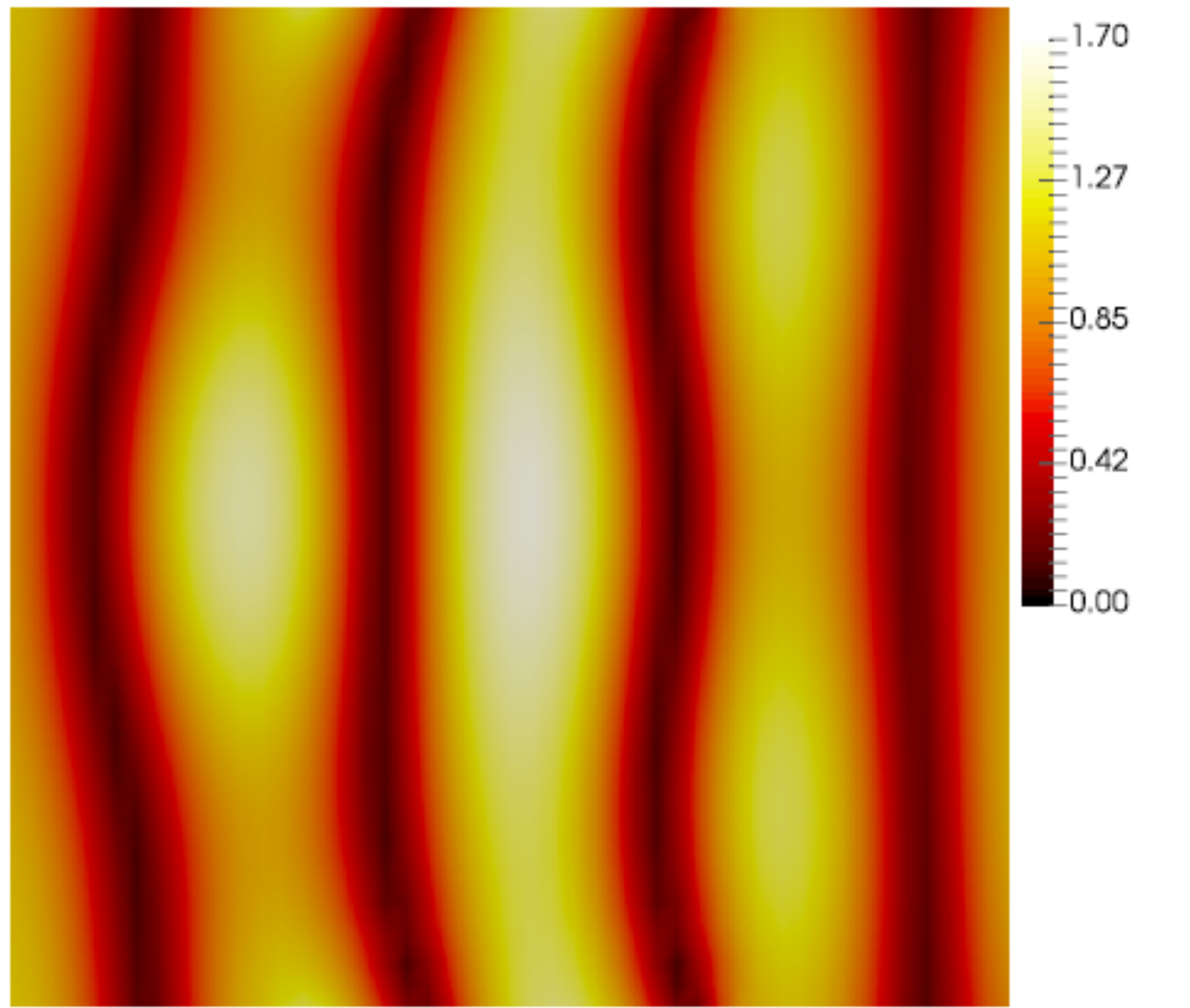}%
\hspace{1.9cm}%
\includegraphics[width=0.44\textwidth, trim= 75mm 33mm 51mm 32mm, clip=true, keepaspectratio=false]{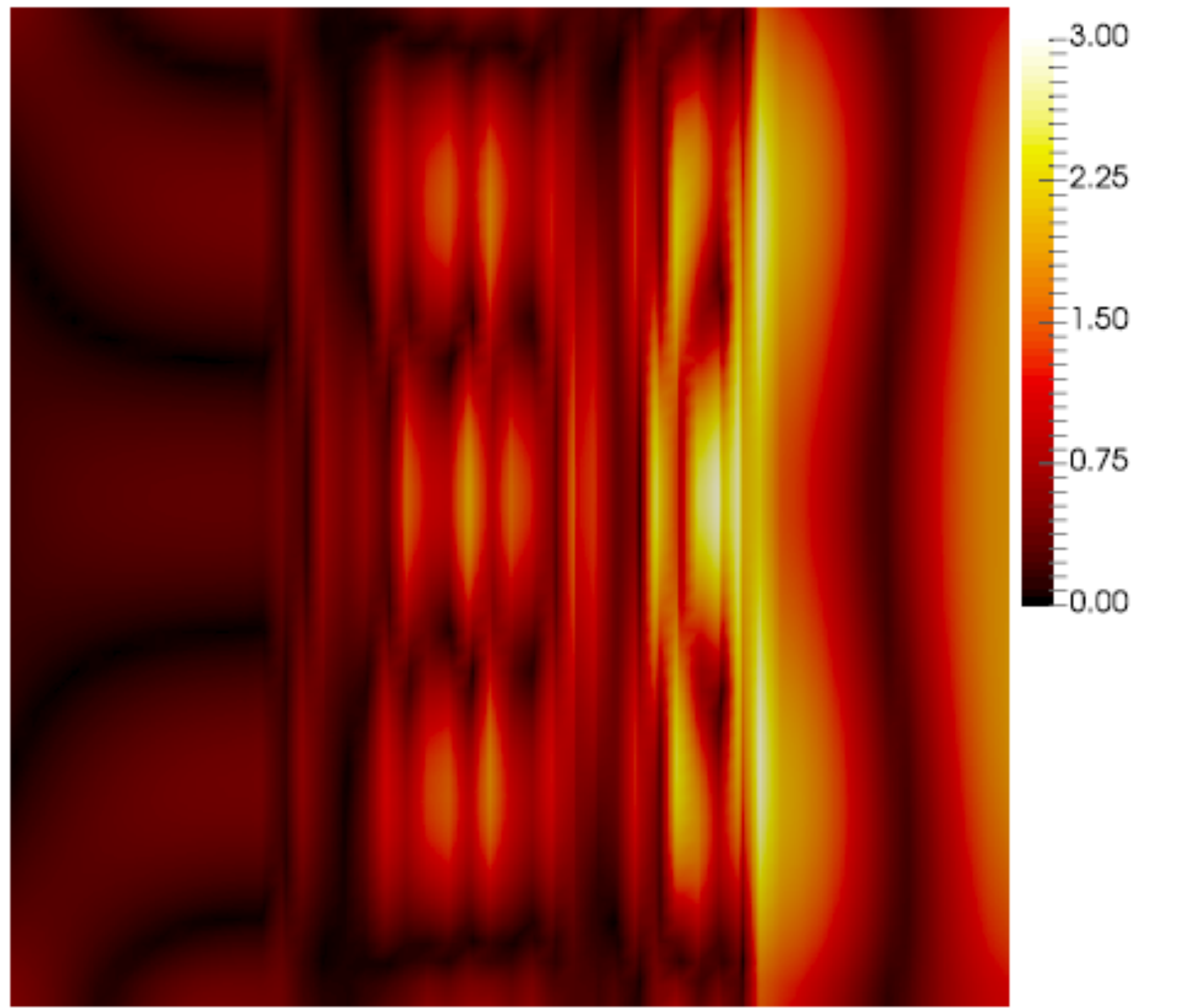}
  \begin{tikzpicture}[scale=1.5, overlay, xshift=30ex, yshift=6.5ex]
  \coordinate (A) at (-0.5, -0.5, 0.5);
  \coordinate (B) at (0.5,-0.5,0.5);
  \coordinate (C) at (0.5,0.5,0.5);
  \coordinate (D) at (-0.5, 0.5, 0.5);

  \coordinate (E) at (-0.5, 0.5, -0.5);
  \coordinate (F) at (0.5, 0.5, -0.5);
  \coordinate (G) at (0.5, -0.5, -0.5);
  \coordinate (H) at (-0.5, -0.5, -0.5);

  \coordinate (A1) at (-0.25, -0.5, 0.25);
  \coordinate (B1) at (0.25,-0.5,0.25);
  \coordinate (C1) at (0.25,0.5,0.25);
  \coordinate (D1) at (-0.25, 0.5, 0.25);

  \coordinate (E1) at (-0.25, 0.5, -0.25);
  \coordinate (F1) at (0.25, 0.5, -0.25);
  \coordinate (G1) at (0.25, -0.5, -0.25);
  \coordinate (H1) at (-0.25, -0.5, -0.25);

  \fill[gray!20!white, opacity=.5] (A) -- (B) -- (C) -- (D) -- cycle;
  \fill[gray!20!white, opacity=.5] (E) -- (F) -- (G) --(H) -- cycle;
  \fill[gray!20!white, opacity=.5] (D) -- (E) -- (H) -- (A)-- cycle;
  \fill[gray!20!white, opacity=.5] (B) -- (C) -- (F) -- (G) -- cycle;

  \fill[gray!75!white] (A1)--(B1)--(C1)--(D1)--cycle;
  \filldraw[gray!75!white] (E1)--(F1)--(G1)--(H1)--cycle;
  \filldraw[gray!75!white] (D1)--(E1)--(H1)--(A1)--cycle;
  \filldraw[gray!75!white] (B1)--(C1)--(F1)--(G1)--cycle;

  \draw[] (A) -- (B) -- (C) -- (D) --cycle (E) -- (D) (F) -- (C) (G) -- (B);
  \draw[] (E) -- (F) -- (G) ;
  \draw[densely dashed] (E) -- (H) (H) -- (G) (H) -- (A);

  \draw[] (A1)--(B1)--(C1)--(D1)--cycle (E1)--(D1) (F1)--(C1) (G1)--(B1);
  \draw[] (E1)--(F1)--(G1);
  \draw[densely dashed] (E1)--(H1) (H1)--(G1) (H1)--(A1);

  \draw[->] ( -.4, -.5, .5)--( 0, -.5, .5 );
  \draw[->] (-.4, -.5, .5) -- (-.4, -.1, .5);
  \draw[->] (-.4, -.5, .5) -- (-.4, -.5, .2);

  \node[] at (0,0,0) {$\tilde \Sigma_1$};
\node[] at (-0, -.6, .5){$\e_1$};
  \node[] at (-.28, -.1, .5){$\e_3$};
  \node[] at (-.4, -.6, .-.1){$\e_2$};

\end{tikzpicture}
\caption{Metal cuboid $\tilde \Sigma_1$, the magnitude of $\Re(\Heff)$ is
  plotted. Left: The $H$-field is $\e_3$-polarized and the plot shows
  values in the plane $x_3=0.5$. The analysis of both, (PC) and (HC)
  yields: transmission is possible. Right: The $H$-field is
  $\e_2$-polarized and the plot shows values in the plane $x_2 =
  0.545$. Since the $H$-field is not parallel to $\e_3$, the analysis
  of (PC) and (HC) predicts that no transmission is possible. Inlet in
  the middle: Microstructure in the unit cube.}
\label{fig:Sigma1-homrefsol}
\end{figure}

\begin{figure}
\includegraphics[width=0.44\textwidth, trim= 75mm 33mm 51mm 32mm, clip=true, keepaspectratio=false]{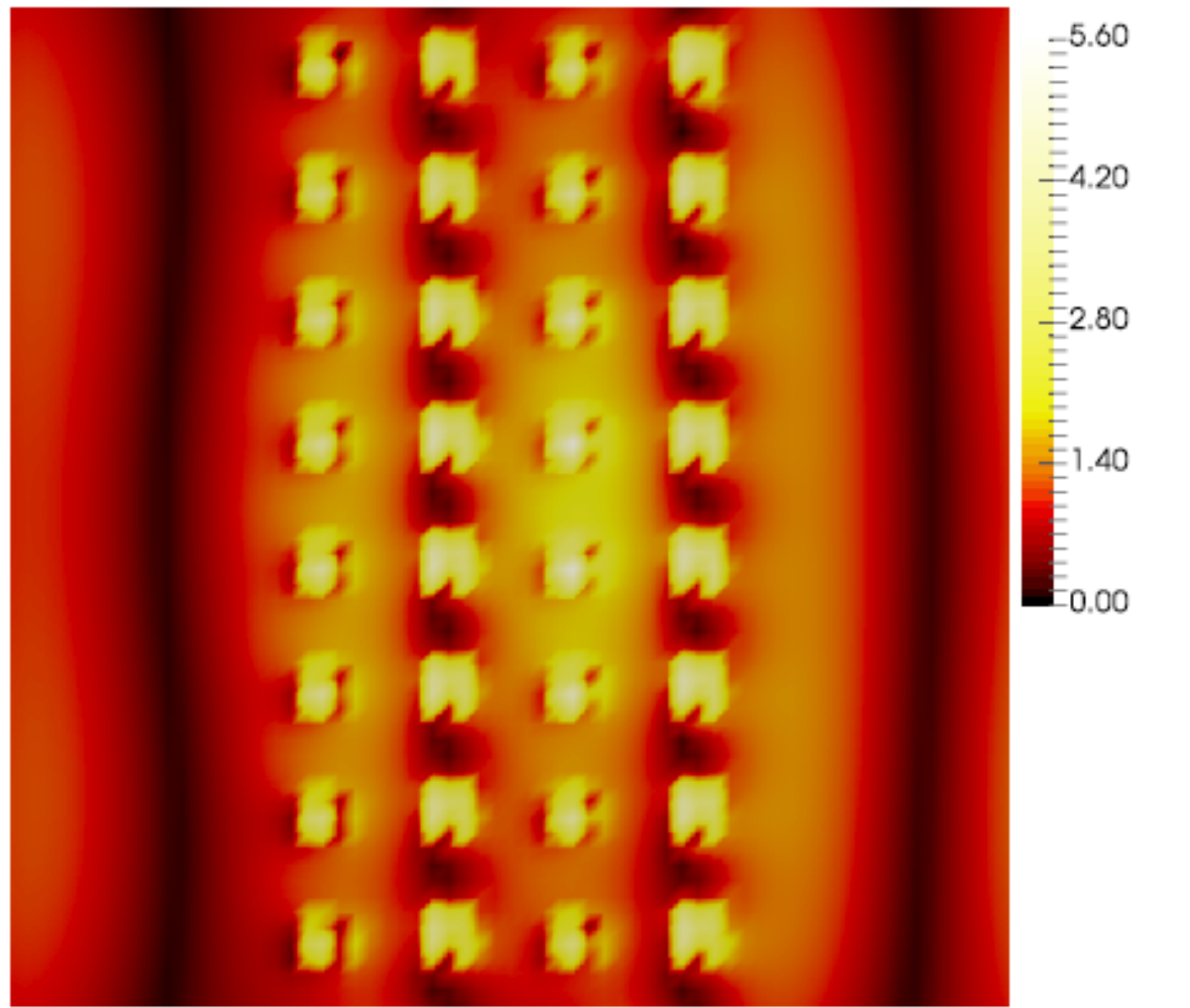}%
\hspace{1.9cm}%
\includegraphics[width=0.44\textwidth, trim= 75mm 33mm 51mm 32mm, clip=true, keepaspectratio=false]{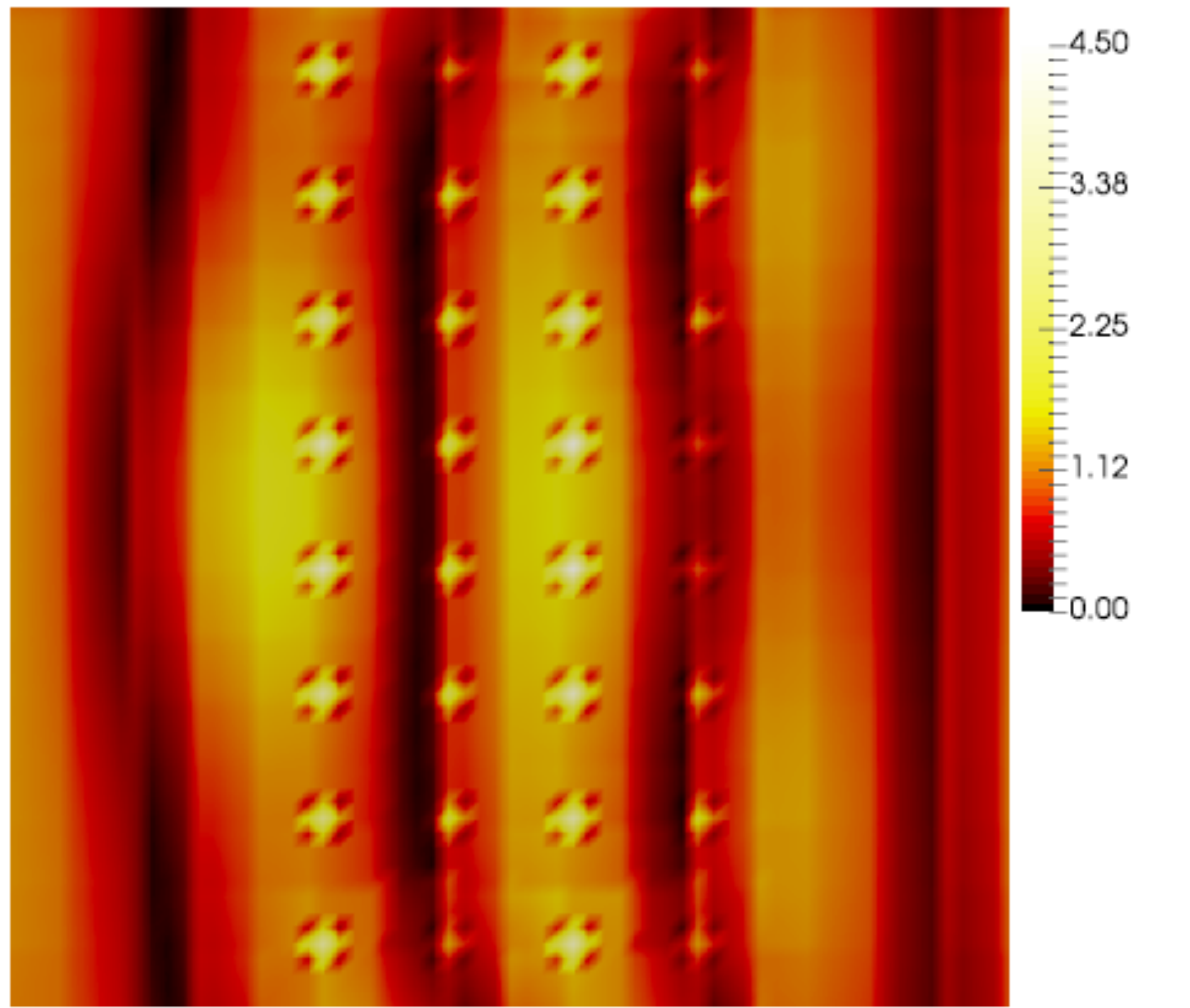}
  \begin{tikzpicture}[scale=1.5, overlay, xshift=30ex, yshift=6.5ex]
  \coordinate (A) at (-0.5, -0.5, 0.5);
  \coordinate (B) at (0.5,-0.5,0.5);
  \coordinate (C) at (0.5,0.5,0.5);
  \coordinate (D) at (-0.5, 0.5, 0.5);

  \coordinate (E) at (-0.5, 0.5, -0.5);
  \coordinate (F) at (0.5, 0.5, -0.5);
  \coordinate (G) at (0.5, -0.5, -0.5);
  \coordinate (H) at (-0.5, -0.5, -0.5);

  \coordinate (A1) at (-0.25, -0.5, 0.25);
  \coordinate (B1) at (0.25,-0.5,0.25);
  \coordinate (C1) at (0.25,0.5,0.25);
  \coordinate (D1) at (-0.25, 0.5, 0.25);

  \coordinate (E1) at (-0.25, 0.5, -0.25);
  \coordinate (F1) at (0.25, 0.5, -0.25);
  \coordinate (G1) at (0.25, -0.5, -0.25);
  \coordinate (H1) at (-0.25, -0.5, -0.25);

  \fill[gray!20!white, opacity=.5] (A) -- (B) -- (C) -- (D) -- cycle;
  \fill[gray!20!white, opacity=.5] (E) -- (F) -- (G) --(H) -- cycle;
  \fill[gray!20!white, opacity=.5] (D) -- (E) -- (H) -- (A)-- cycle;
  \fill[gray!20!white, opacity=.5] (B) -- (C) -- (F) -- (G) -- cycle;

  \fill[gray!75!white] (A1)--(B1)--(C1)--(D1)--cycle;
  \filldraw[gray!75!white] (E1)--(F1)--(G1)--(H1)--cycle;
  \filldraw[gray!75!white] (D1)--(E1)--(H1)--(A1)--cycle;
  \filldraw[gray!75!white] (B1)--(C1)--(F1)--(G1)--cycle;

  \draw[] (A) -- (B) -- (C) -- (D) --cycle (E) -- (D) (F) -- (C) (G) -- (B);
  \draw[] (E) -- (F) -- (G) ;
  \draw[densely dashed] (E) -- (H) (H) -- (G) (H) -- (A);

  \draw[] (A1)--(B1)--(C1)--(D1)--cycle (E1)--(D1) (F1)--(C1) (G1)--(B1);
  \draw[] (E1)--(F1)--(G1);
  \draw[densely dashed] (E1)--(H1) (H1)--(G1) (H1)--(A1);

  \draw[->] ( -.4, -.5, .5)--( 0, -.5, .5 );
  \draw[->] (-.4, -.5, .5) -- (-.4, -.1, .5);
  \draw[->] (-.4, -.5, .5) -- (-.4, -.5, .2);

  \node[] at (0,-0.35,0) {$\tilde \Sigma_1$};
\node[] at (-0, -.6, .5){$\e_1$};
  \node[] at (-.28, -.1, .5){$\e_3$};
  \node[] at (-.4, -.6, .-.1){$\e_2$};

  \filldraw[red, opacity=.5] (-0.5, 0, -0.5) -- (0.5, 0, -0.5) --(0.5, 0, 0.5) --(-0.5, 0, 0.5)--cycle;

\end{tikzpicture}
\caption{Test of numerical schemes for the metal cuboid
  $\tilde \Sigma_1$. We consider an $\e_3$-polarized incoming $H$-field and
  plot the solution in the plane $x_3=0.5$; the colors indicate the
  magnitude of the reference solution $\Re(H^\eta)$ (left) and the
  zeroth order approximation $\Re(H^0_{\HMM})$ (right). Inlet in the
  center: Microsctructure in the unit cube with visualization plane in
  red.}
\label{fig:Sigma1-HMM}
\end{figure}

\begin{figure}
  \includegraphics[width=0.44\textwidth, trim= 75mm 33mm 51mm 32mm,
  clip=true,
  keepaspectratio=false]{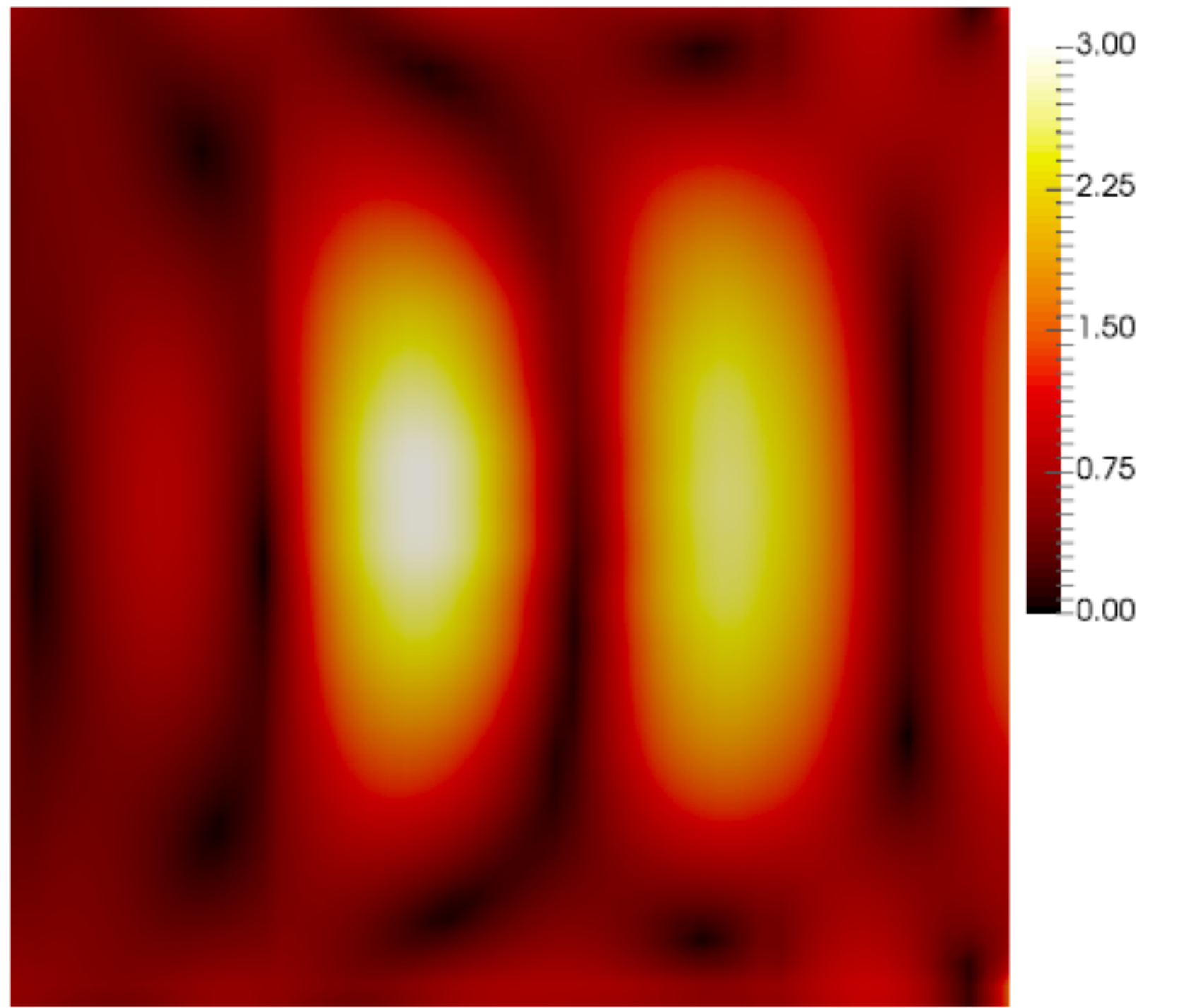}%
  \hspace{1.9cm}%
  \includegraphics[width=0.44\textwidth, trim= 75mm 33mm 51mm 32mm,
  clip=true,
  keepaspectratio=false]{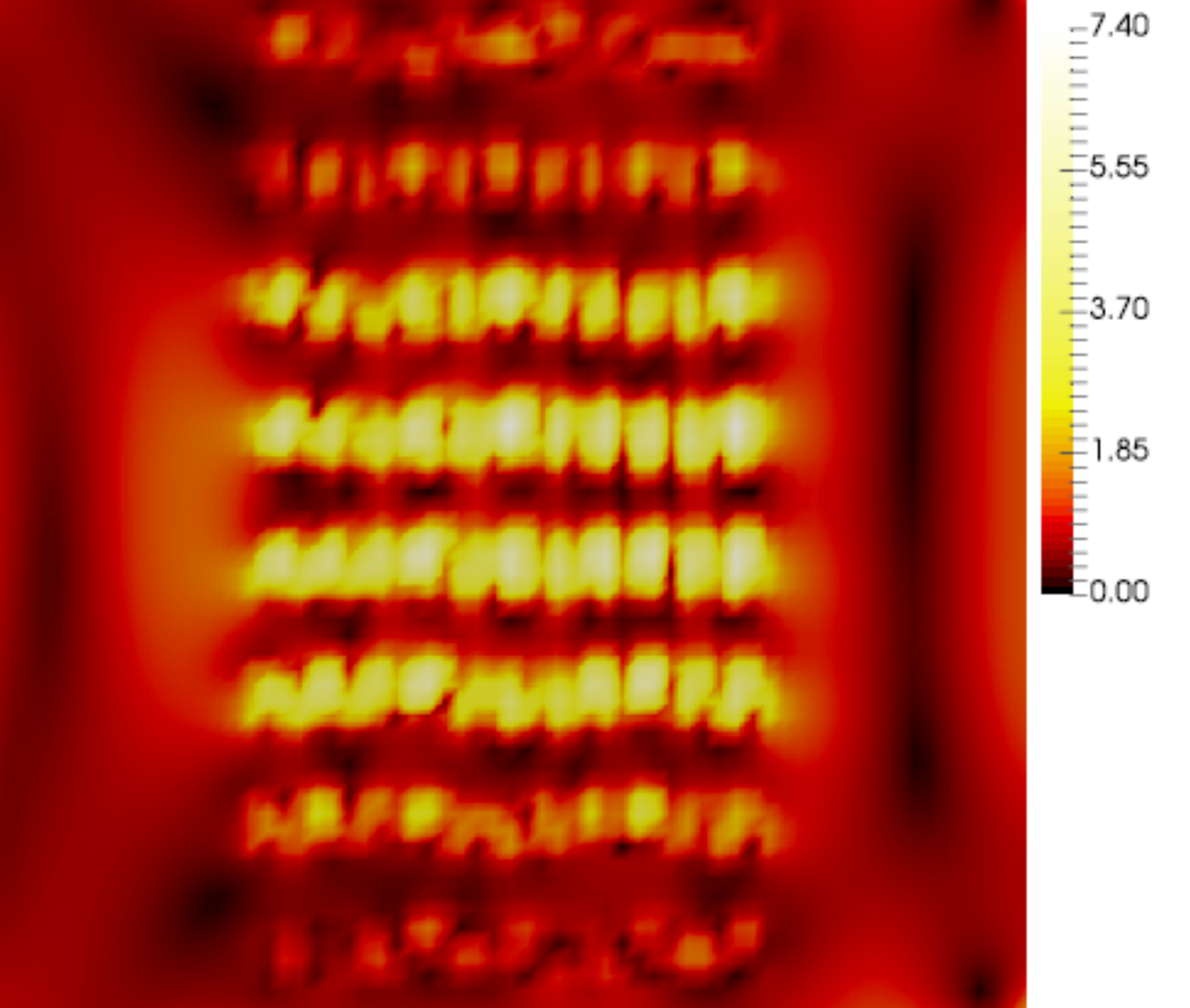}
  \begin{tikzpicture}[scale=1.5, overlay, xshift=30ex, yshift=6.5ex]
  \coordinate (A) at (-0.5, -0.5, 0.5);
  \coordinate (B) at (0.5,-0.5,0.5);
  \coordinate (C) at (0.5,0.5,0.5);
  \coordinate (D) at (-0.5, 0.5, 0.5);

  \coordinate (E) at (-0.5, 0.5, -0.5);
  \coordinate (F) at (0.5, 0.5, -0.5);
  \coordinate (G) at (0.5, -0.5, -0.5);
  \coordinate (H) at (-0.5, -0.5, -0.5);

  \coordinate (A1) at (-0.5, -0.25, 0.25);
  \coordinate (B1) at (0.5,-0.25,0.25);
  \coordinate (C1) at (0.5,0.25,0.25);
  \coordinate (D1) at (-0.5, 0.25, 0.25);

  \coordinate (E1) at (-0.5, 0.25, -0.25);
  \coordinate (F1) at (0.5, 0.25, -0.25);
  \coordinate (G1) at (0.5, -0.25, -0.25);
  \coordinate (H1) at (-0.5, -0.25, -0.25);

  \fill[gray!20!white, opacity=.5] (A) -- (B) -- (C) -- (D) -- cycle;
  \fill[gray!20!white, opacity=.5] (E) -- (F) -- (G) --(H) -- cycle;
  \fill[gray!20!white, opacity=.5] (D) -- (E) -- (H) -- (A)-- cycle;
  \fill[gray!20!white, opacity=.5] (B) -- (C) -- (F) -- (G) -- cycle;

  \fill[gray!75!white] (A1)--(B1)--(C1)--(D1)--cycle;
  \filldraw[gray!75!white] (E1)--(F1)--(G1)--(H1)--cycle;
  \filldraw[gray!75!white] (D1)--(E1)--(H1)--(A1)--cycle;
  \filldraw[gray!75!white] (B1)--(C1)--(F1)--(G1)--cycle;

  \draw[] (A) -- (B) -- (C) -- (D) --cycle (E) -- (D) (F) -- (C) (G) -- (B);
  \draw[] (E) -- (F) -- (G) ;
  \draw[densely dashed] (E) -- (H) (H) -- (G) (H) -- (A);

  \draw[] (A1)--(B1)--(C1)--(D1)--cycle (E1)--(D1) (F1)--(C1) (G1)--(B1);
  \draw[] (E1)--(F1)--(G1);
  \draw[densely dashed] (E1)--(H1) (H1)--(G1) (H1)--(A1);

  \draw[->] ( -.4, -.5, .5)--( 0, -.5, .5 );
  \draw[->] (-.4, -.5, .5) -- (-.4, -.2, .5);
  \draw[->] (-.4, -.5, .5) -- (-.4, -.5, .2);

  \node[] at (0,-0,0) {$\tilde \Sigma_2$};
\node[] at (-0, -.6, .5){$\e_1$};
  \node[] at (-.2, -.2, .5){$\e_3$};
  \node[] at (-.4, -.6, .-.1){$\e_2$};

  \filldraw[red, opacity=.5] (-0.5, -0.5, -0) -- (0.5, -0.5, -0) --(0.5, 0.5, 0) --(-0.5, 0.5, 0)--cycle;

\end{tikzpicture}

\caption{Metal cuboid $\tilde \Sigma_2$. We study an $\e_3$-polarized
  incident $H$-field and plot the magnitude of $\Re(\Heff)$ (left) and
  $\Re(H^\eta)$ (right) in the plane $x_2=0.545$. The analysis (PC)
  predicts transmission in this case, the analysis (HC) does not
  exclude transmission.  Middle: Microstructure in the unit cube with
  visualization plane in red.}
\label{fig:Sigma2-homrefsol}
\end{figure}

Instead of metal cylinders with circular base we study metal cuboids
with square base, so that we do not have to deal with boundary
approximations in our numerical method. We choose $\tilde
\Sigma_1=(0.25, 0.75)^2\times (0,1)$ and $\tilde \Sigma_2=(0,1)\times
(0.25, 0.75)^2$.  Note that this choice influences the value of
$\gamma$, but not the other results of Sections
\ref{sec:metal-cylinder} and \ref{sec:metal-cylinder-case}.  Due to
the symmetry of the microstructure, the effective material parameters
are diagonal matrices with $a_{11}=a_{22}$, but with a different value
$a_{33}$; see the analytical computations in Section
\ref{sec:metal-cylinder}.  Up to numerical errors, we obtain the same
structure for the computed approximative effective parameters.

Comparing the homogenized reference solution $\Heff$ for $\e_2$- and
$\e_3$-polarized incoming waves for $\tilde \Sigma_1$ in
Fig.~\ref{fig:Sigma1-homrefsol}, we observe that the $\e_3$-polarized
wave is transmitted almost undisturbed through the meta-material.  For
the $\e_2$-polarization, however, the field intensity in $Q_L\coloneqq
\set{x \in G \given x\leq 0.25}$ is very low, corresponding to small
transmission factors.  This matches the analytical predictions of
Section \ref{sec:metal-cylinder}, which yields transmission only for
$\e_3$-parallel $H$-fields. The same effect is predicted for
high-contrast media by the analysis of Section
\ref{sec:parall-electr-field}.

The HMM can reproduce the behaviour of the homogenized and of the
heterogeneous solution.  For the comparison, we only consider the
$\e_3$-polarized incoming wave in Fig.~\ref{fig:Sigma1-HMM} and
compare the zeroth order approximation $H_{\HMM}^0$ (right) to the
(true) reference solution $H^\eta$ (left).  Errors are still visible,
but the qualitative agreement is good, even for the coarse mesh size
of $h=h_Y=\sqrt{3}\cdot 1/16$ chosen for the HMM.  In particular, the
rather cheaply computable zeroth order approximation $H^0_{\HMM}$ can
capture most of the important features of the \emph{true} solution,
even for inclusions of high-contrast. This clearly underlines the
potential of the HMM.  Moreover, Fig.~\ref{fig:Sigma1-HMM} underlines
the specific behaviour of $H^\eta$ in the inclusions $\tilde
\Sigma_\eta$ for high-contrast media.  As analyzed in Section
\ref{sec:parall-magn-field}, $H^\eta$ cannot be expected to vanish in
the inclusions in the limit $\eta\to 0$ due to possible resonances;
see \cite{BBF09hommaxwell}.  We observe rather high field intensities
in the inclusions; see also \cite{Ver17hmmmaxwell} for a slightly
different inclusion geometry.

\smallskip We also study the rotated metal cuboid $\tilde \Sigma_2$.  In
correspondence to the analysis of Section
\ref{sec:metal-cylinder-case}, we observe transmission for an
$\e_3$-polarized incident wave; see Fig.~\ref{fig:Sigma2-homrefsol}.
Note that the homogenized reference solution looks different to
$\tilde \Sigma_1$ because of the rotation of the geometry, which is also
reflected in the different structure and values of the reflection and
transmission coefficients.  The (true) reference solution in Fig.~%
\ref{fig:Sigma3-homrefsol} shows the high field intensities in the
metal cuboids induced by the high-contrast permittivity.

\subsection{Metal plate $\Sigma_3$}

\begin{figure}
  \includegraphics[width=0.44\textwidth, trim= 75mm 33mm 51mm 32mm,
  clip=true,
  keepaspectratio=false]{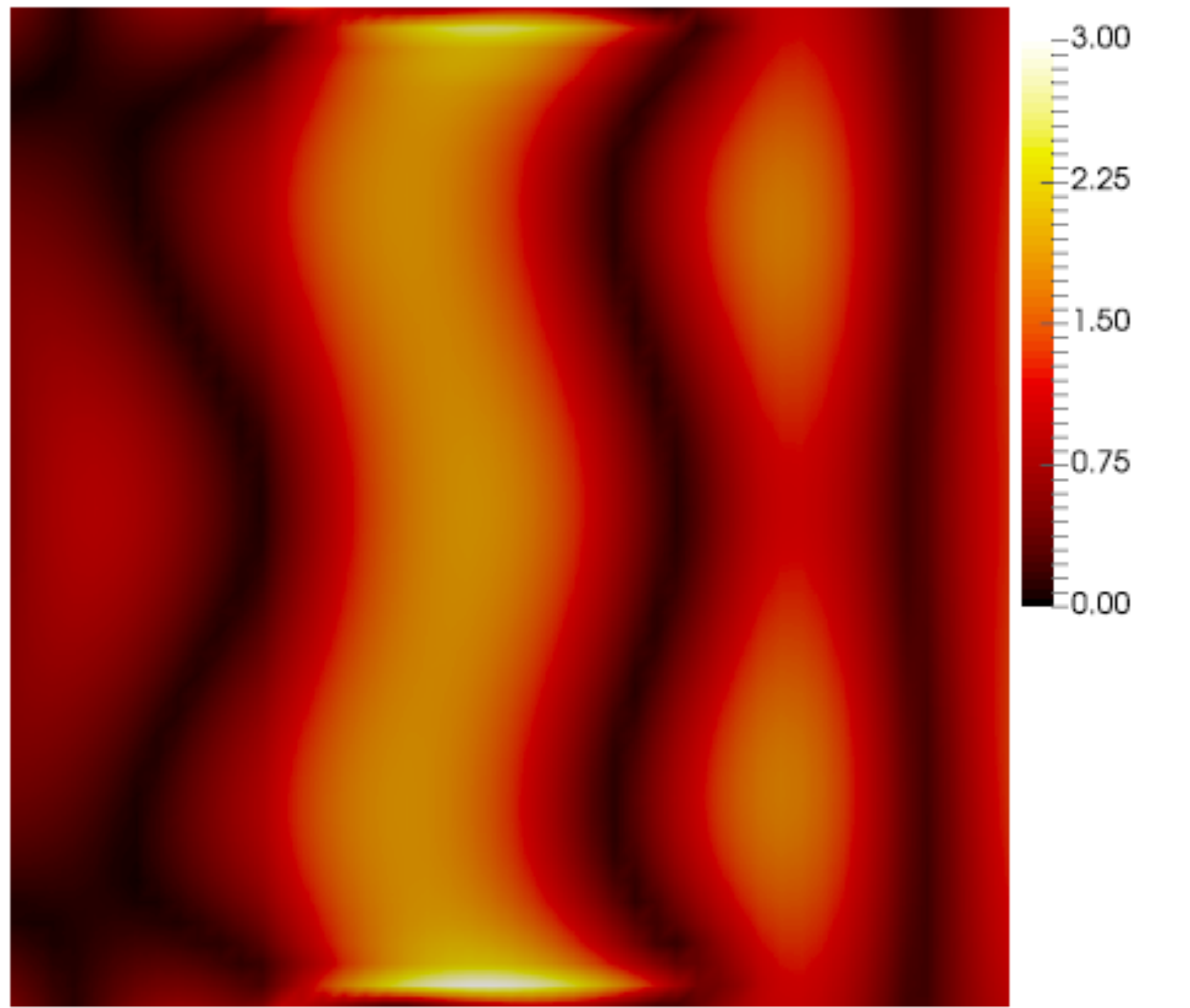}%
  \hspace{1.9cm}%
  \includegraphics[width=0.44\textwidth, trim= 75mm 33mm 51mm 32mm,
  clip=true,
  keepaspectratio=false]{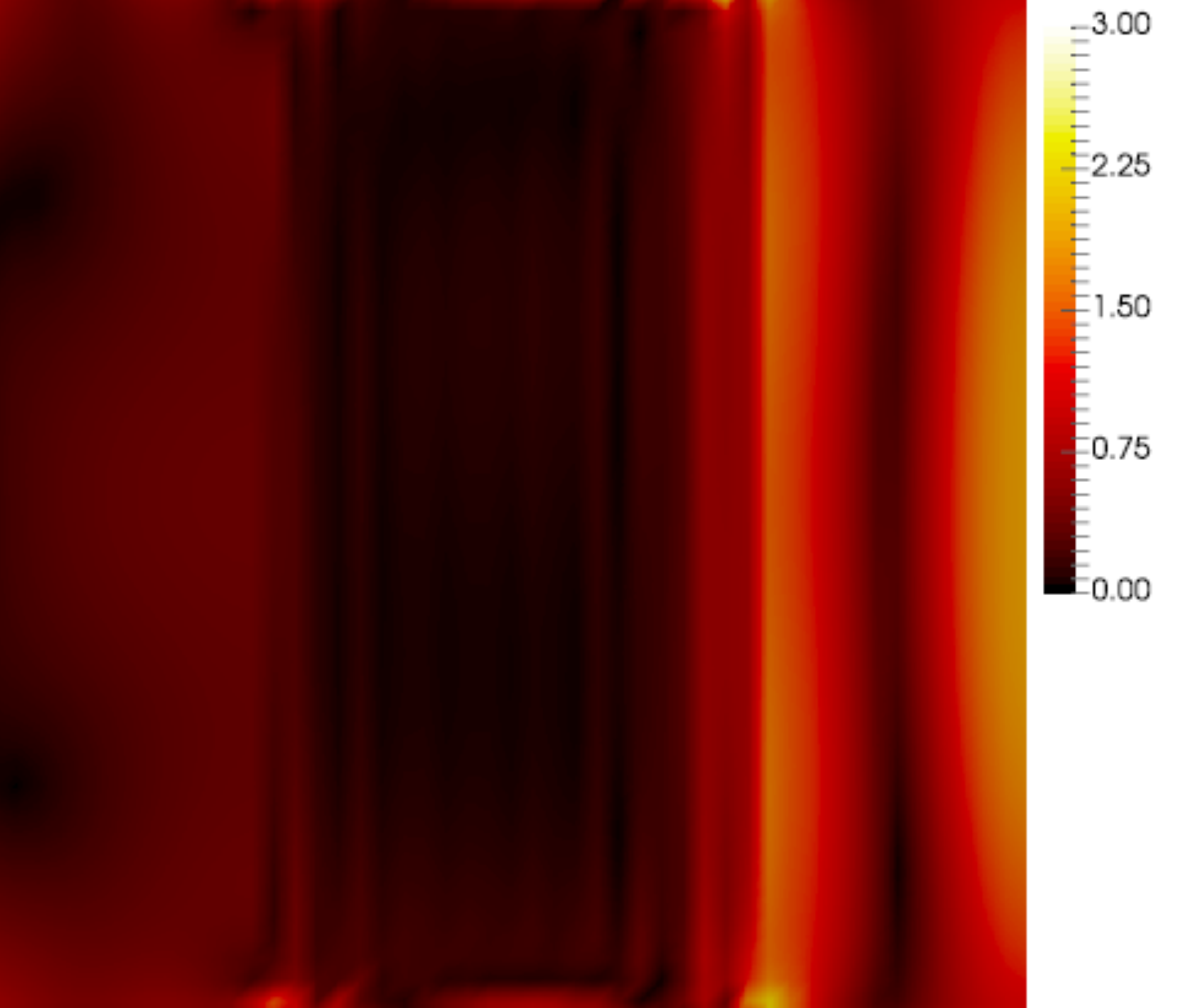}
 \begin{tikzpicture}[scale=1.5, overlay, xshift=30ex, yshift=6.5ex]
  \coordinate (A) at (-0.5, -0.5, 0.5);
  \coordinate (B) at (0.5,-0.5,0.5);
  \coordinate (C) at (0.5,0.5,0.5);
  \coordinate (D) at (-0.5, 0.5, 0.5);

  \coordinate (E) at (-0.5, 0.5, -0.5);
  \coordinate (F) at (0.5, 0.5, -0.5);
  \coordinate (G) at (0.5, -0.5, -0.5);
  \coordinate (H) at (-0.5, -0.5, -0.5);

  \coordinate (M1) at (-0.5, -0.5, 0.15);
  \coordinate (M2) at (-0.5, -0.5, -0.15);
  \coordinate (M3) at (-0.5, 0.5, -0.15);
  \coordinate (M4) at (-0.5, 0.5, 0.15);
  \coordinate (M5) at (0.5, 0.5, 0.15);
  \coordinate (M6) at (0.5,0.5,-0.15);
  \coordinate (M7) at (0.5, -0.5, -0.15);
  \coordinate (M8) at (0.5, -0.5, 0.15);

  \fill[gray!20!white, opacity=.5] (A) -- (B) -- (C) -- (D) -- cycle;
  \fill[gray!20!white, opacity=.5] (E) -- (F) -- (G) --(H) -- cycle;
  \fill[gray!20!white, opacity=.5] (D) -- (E) -- (H) -- (A)-- cycle;
  \fill[gray!20!white, opacity=.5] (B) -- (C) -- (F) -- (G) -- cycle;

  \fill[gray!75!white, opacity=.9] (M4) -- (M5) -- (M6) -- (M3) -- cycle;
  \fill[gray!75!white, opacity=.9] (M1) -- (M4) -- (M5) -- (M8) -- cycle;
  \fill[gray!75!white, opacity=.9] (M2) -- (M3) -- (M6) -- (M7) -- cycle;
  \fill[gray!75!white, opacity=.9] (M1) -- (M2) -- (M7) -- (M8) -- cycle;

  \draw[] (A) -- (B) -- (C) -- (D) --cycle (E) -- (D) (F) -- (C) (G) -- (B);
  \draw[] (E) -- (F) -- (G) ;
  \draw[densely dashed] (E) -- (H) (H) -- (G) (H) -- (A);

  \draw[dashed] (M1) -- (M4) (M2) -- (M3) ;
  \draw[] (M4) -- (M5) -- (M6) -- (M3) -- cycle;
  \draw[dashed] (M5) -- (M8) (M7) -- (M6);
  \draw[dashed] (M1) -- (M8)  (M7) -- (M2);

  \draw[->] ( -.4, -.5, .5)--( 0, -.5, .5 );
  \draw[->] (-.4, -.5, .5) -- (-.4, -.1, .5);
  \draw[->] (-.4, -.5, .5) -- (-.4, -.5, .2);

  \node[] at (0,0,0) {$\Sigma_1$};
\node[] at (-0, -.6, .5){$\e_1$};
  \node[] at (-.28, -.1, .5){$\e_3$};
  \node[] at (-.4, -.6, .-.1){$\e_2$};
\end{tikzpicture}
\caption{Metal plate $\Sigma_3$. The colors indicate the magnitude
  of $\Re(\Heff)$ in the plane $x_3=0.5$. Left: The $H$-field is
  $\e_3$-polarized. The analysis (PC) predicts transmission, the
  analysis (HC) cannot exclude transmission. Right: The $H$-field is
  $\e_2$-polarized. The analysis (PC and HC) predicts that no
  transmission is possible.}
\label{fig:Sigma3-homrefsol}
\end{figure}

As in Section \ref{sec:metal-plate}, we choose a metal plate
perpendicular to $\e_2$ of width $0.5$, i.e.\
$\Sigma_3=(0,1)\times (0.25, 0.75)\times (0,1)$.  Discretising the
cell problems with mesh size $h_Y=\sqrt{3}\cdot 1/24$, we obtain---up
to numerical errors---the effective material parameters as diagonal
matrices with
\begin{align*}
\widehat{\varepsilon^{-1}}&\approx \diag(10^{-4}, 0.5, 10^{-4})\,,\\
\Re\hat{\mu}&\approx\diag (0.228303, -0.044672, 0.228303)\,.
\end{align*}
Although we consider high-contrast media, this correspond
astonishingly well to the analytical results for perfect conductors of
Section \ref{sec:metal-plate}: The structure of the matrices agrees
and the non-zero value of $\widehat{\varepsilon^{-1}}=|Y\setminus
\overline{\Sigma}|$ is as expected from the theory of perfect
conductors. Due to the contributions of the inclusions, the values of
$\mu_{\hom}$ are different from the case of perfect conductors.

Section \ref{sec:metal-plate} shows that, for perfect conductors, only
an $H$-field polarized in $\e_3$-direction can be transmitted through
the meta-material.  Our numerical experiments allow a similar
observation for high-contrast media in Fig.~%
\ref{fig:Sigma3-homrefsol}: The homogenized reference solution only
shows a non-negligible intensity in the domain $Q_L=\{x\in G|x_3\leq
0.25\}$ left of the scatterer if the incident wave is polarized in
$\e_3$ direction.  Note that we have some reflections from the
boundary in Fig.~\ref{fig:Sigma3-homrefsol} since we do not use
perfectly matched layers as boundary conditions.  The observed
transmission properties for high-contrast media are in accordance with
the theory in Section \ref{sec:high-contrast-media}: For an
$\e_3$-polarized $H$-field as in the left figure, we cannot expect a
(weak) convergence to zero.  This corresponds to the observed
non-trivial transmission.  By contrast, in the right figure, the
$H$-field is $\e_2$-polarized and no transmission can be
observed. This corresponds to the analysis of Section
\ref{sec:parall-electr-field}, which shows that $H^\eta$ converges to
zero, weakly in $L^2(Q_M)$.

\subsection{Air cuboid $\tilde \Sigma_4$}

\begin{figure}
 \includegraphics[width=0.44\textwidth, trim= 75mm 33mm 51mm 32mm,
  clip=true,
  keepaspectratio=false]{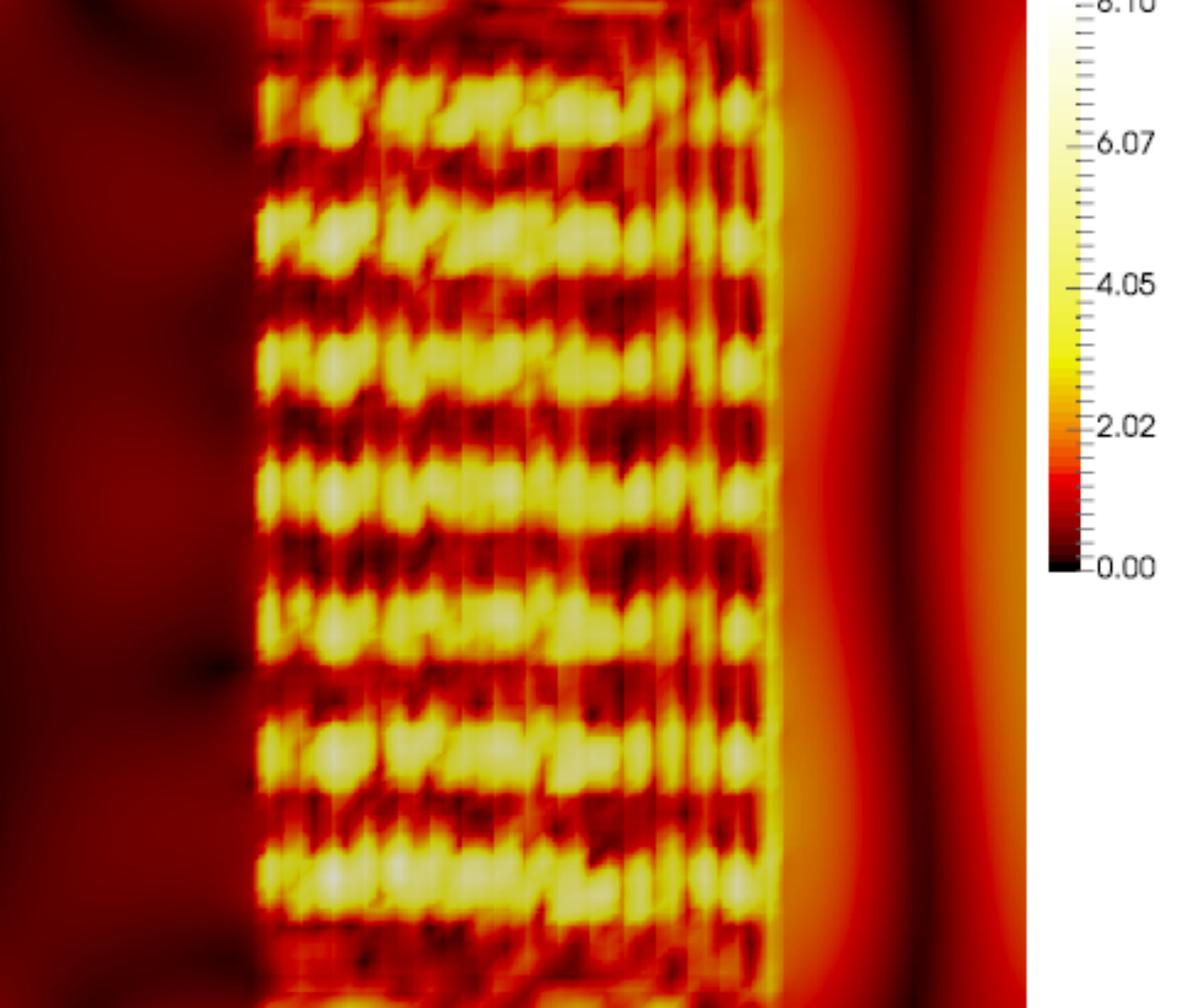}
\hspace{1.9cm}%
 \includegraphics[width=0.44\textwidth, trim= 75mm 33mm 51mm 32mm,
  clip=true,
  keepaspectratio=false]{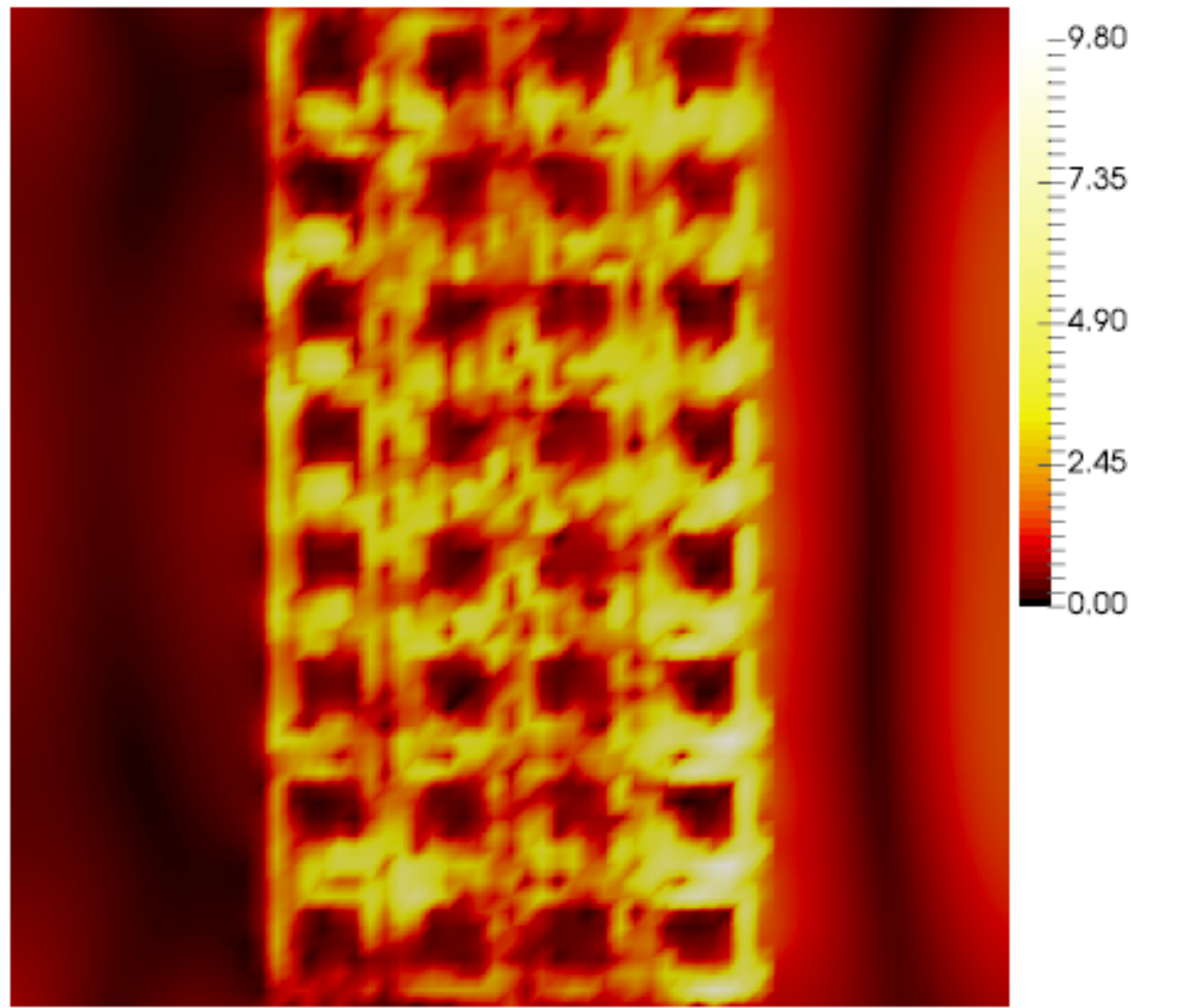}%
\begin{tikzpicture}[scale=1.5, overlay, xshift=-35ex, yshift=4.7ex]
  \coordinate (A) at (-0.5, -0.5, 0.5);
  \coordinate (B) at (0.5,-0.5,0.5);
  \coordinate (C) at (0.5,0.5,0.5);
  \coordinate (D) at (-0.5, 0.5, 0.5);

  \coordinate (E) at (-0.5, 0.5, -0.5);
  \coordinate (F) at (0.5, 0.5, -0.5);
  \coordinate (G) at (0.5, -0.5, -0.5);
  \coordinate (H) at (-0.5, -0.5, -0.5);

  \coordinate (A1) at (-0.5, -0.25, 0.25);
  \coordinate (B1) at (0.5,-0.25,0.25);
  \coordinate (C1) at (0.5,0.25,0.25);
  \coordinate (D1) at (-0.5, 0.25, 0.25);

  \coordinate (E1) at (-0.5, 0.25, -0.25);
  \coordinate (F1) at (0.5, 0.25, -0.25);
  \coordinate (G1) at (0.5, -0.25, -0.25);
  \coordinate (H1) at (-0.5, -0.25, -0.25);

  \fill[gray!75!white, opacity=.9] (A) -- (B) -- (C) -- (D) -- cycle;
  \fill[gray!75!white, opacity=.9] (E) -- (F) -- (G) --(H) -- cycle;
  \fill[gray!75!white, opacity=.9] (D) -- (E) -- (H) -- (A)-- cycle;
  \fill[gray!75!white, opacity=.9] (B) -- (C) -- (F) -- (G) -- cycle;

  \fill[gray!25!white] (A1)--(B1)--(C1)--(D1)--cycle;
  \filldraw[gray!25!white] (E1)--(F1)--(G1)--(H1)--cycle;
  \filldraw[gray!25!white] (D1)--(E1)--(H1)--(A1)--cycle;
  \filldraw[gray!25!white] (B1)--(C1)--(F1)--(G1)--cycle;

  \draw[] (A) -- (B) -- (C) -- (D) --cycle (E) -- (D) (F) -- (C) (G) -- (B);
  \draw[] (E) -- (F) -- (G) ;
  \draw[densely dashed] (E) -- (H) (H) -- (G) (H) -- (A);

  \draw[] (A1)--(B1)--(C1)--(D1)--cycle (E1)--(D1) (F1)--(C1) (G1)--(B1);
  \draw[] (E1)--(F1)--(G1);
  \draw[densely dashed] (E1)--(H1) (H1)--(G1) (H1)--(A1);

  \draw[->] ( -.4, -.5, .5)--( 0, -.5, .5 );
  \draw[->] (-.4, -.5, .5) -- (-.4, -.1, .5);
  \draw[->] (-.4, -.5, .5) -- (-.4, -.5, .2);

  \node[] at (0,0,0) {$\tilde \Sigma_4$};
\node[] at (-0, -.6, .5){$\e_1$};
  \node[] at (-.28, -.1, .5){$\e_3$};
  \node[] at (-.4, -.6, .-.1){$\e_2$};

  \filldraw[red, opacity=.5] (-0.5, 0, -0.5) -- (0.5, 0, -0.5) --(0.5, 0, 0.5) --(-0.5, 0, 0.5)--cycle;
\end{tikzpicture}
\begin{tikzpicture}[scale=1.5, overlay, xshift=60ex, yshift=6.7ex]
  \coordinate (A) at (-0.5, -0.5, 0.5);
  \coordinate (B) at (0.5,-0.5,0.5);
  \coordinate (C) at (0.5,0.5,0.5);
  \coordinate (D) at (-0.5, 0.5, 0.5);

  \coordinate (E) at (-0.5, 0.5, -0.5);
  \coordinate (F) at (0.5, 0.5, -0.5);
  \coordinate (G) at (0.5, -0.5, -0.5);
  \coordinate (H) at (-0.5, -0.5, -0.5);

  \coordinate (A1) at (-0.25, -0.5, 0.25);
  \coordinate (B1) at (0.25,-0.5,0.25);
  \coordinate (C1) at (0.25,0.5,0.25);
  \coordinate (D1) at (-0.25, 0.5, 0.25);

  \coordinate (E1) at (-0.25, 0.5, -0.25);
  \coordinate (F1) at (0.25, 0.5, -0.25);
  \coordinate (G1) at (0.25, -0.5, -0.25);
  \coordinate (H1) at (-0.25, -0.5, -0.25);

  \fill[gray!75!white, opacity=.9] (A) -- (B) -- (C) -- (D) -- cycle;
  \fill[gray!75!white, opacity=.9] (E) -- (F) -- (G) --(H) -- cycle;
  \fill[gray!75!white, opacity=.9] (D) -- (E) -- (H) -- (A)-- cycle;
  \fill[gray!75!white, opacity=.9] (B) -- (C) -- (F) -- (G) -- cycle;

  \fill[gray!25!white] (A1)--(B1)--(C1)--(D1)--cycle;
  \filldraw[gray!25!white] (E1)--(F1)--(G1)--(H1)--cycle;
  \filldraw[gray!25!white] (D1)--(E1)--(H1)--(A1)--cycle;
  \filldraw[gray!25!white] (B1)--(C1)--(F1)--(G1)--cycle;

  \draw[] (A) -- (B) -- (C) -- (D) --cycle (E) -- (D) (F) -- (C) (G) -- (B);
  \draw[] (E) -- (F) -- (G) ;
  \draw[densely dashed] (E) -- (H) (H) -- (G) (H) -- (A);

  \draw[] (A1)--(B1)--(C1)--(D1)--cycle (E1)--(D1) (F1)--(C1) (G1)--(B1);
  \draw[] (E1)--(F1)--(G1);
  \draw[densely dashed] (E1)--(H1) (H1)--(G1) (H1)--(A1);

  \draw[->] ( -.4, -.5, .5)--( 0, -.5, .5 );
  \draw[->] (-.4, -.5, .5) -- (-.4, -.1, .5);
  \draw[->] (-.4, -.5, .5) -- (-.4, -.5, .2);

  \node[] at (0,-0.35,0) {$\tilde{\Sigma}_4$};
\node[] at (-0, -.6, .5){$\e_1$};
  \node[] at (-.28, -.1, .5){$\e_3$};
  \node[] at (-.4, -.6, .-.1){$\e_2$};

  \filldraw[red, opacity=.5] (-0.5, 0, -0.5) -- (0.5, 0, -0.5) --(0.5, 0, 0.5) --(-0.5, 0, 0.5)--cycle;
\end{tikzpicture}
\caption{Metal block with holes. Left: The structure $\tilde
  \Sigma_4$, we plot the magnitude of $\Re(H^\eta)$ in the plane
  $x_3=0.545$ for $\e_3$-polarized incoming $H$-field. The analysis
  (PC) predicts no transmission, the analysis (HC) cannot exclude
  transmission. Right: A geometry in which the cylinders $\tilde
  \Sigma_4$ are rotated in $\e_3$-direction.  We plot the magnitude of
  $\Re(H^\eta)$ in the plane $x_3=0.5$ for $\e_3$-polarized incoming
  $H$-field.  Small pictures show the microstructures in the unit cube
  and the visualization planes in red.}
\label{fig:Sigma4-refsol}
\end{figure}

As with the metal cylinder, we equip the air cylinder of Section
\ref{sec:air-cylinder} with a square base in order to have a
geometry-fitting mesh.  To be precise, we define the microstructure
$\tilde \Sigma_4=(0,1)^3\setminus((0,1)\times(0.25, 0.75)^2)$.  The
effective permittivity $\widehat{\varepsilon^{-1}}$ vanishes almost
identically for this setting; numerically we obtain only entries of
order $10^{-5}$ for a discretisation of the corresponding cell problem
with mesh size $h_Y=\sqrt{3}\cdot 1/24$.  As discussed in Section
\ref{sec:air-cylinder}, no transmission through this meta-material is
expected for the high conductors.  We observe the same for
high-contrast media in Fig.~\ref{fig:Sigma4-refsol}: The (true)
reference solution (almost) vanishes in the left part $Q_L$ in all
situations. Here, we only depict $\e_3$-polarized incident waves, once
for $\tilde \Sigma_4$ as described and once for the rotated air cuboid
with main axis in $\e_3$-direction (this is the setting of Section
\ref{sec:metal-cylinder} with interchanged roles of metal and air).
Note that inside the microstructure, high intensities and amplitudes
of the $H^\eta$-field occur due to resonances in the high-contrast
medium.

\section*{Conclusion}
We analyzed the transmission properties of meta-materials consisting
of perfect conductors or high-contrast materials.  Depending on the
geometry of the microstructure, certain entries in the effective
material parameters vanish, which induces that also certain components
of the solution vanish.  This influences the transmission properties
of the material. Transmission is possible only for certain
polarizations of the incoming wave.  For perfect conductors, we
derived closed formulas for the reflection and transmission
coefficients.  Using the Heterogeneous Multiscale Method, the
homogenized solution as well as some features of the exact solution
can be approximated on rather coarse meshes and, in particular, with a
cost that is independent of the periodicity length.  Our numerical
experiments of three representative geometries with high-contrast
materials confirm the theoretical predictions of their transmission
properties.

\bibliographystyle{abbrv}

\begin{thebibliography}{10}

\bibitem{Abd05hmmanalysis}
A.~Abdulle.
\newblock On a priori error analysis of fully discrete heterogeneous multiscale
  {FEM}.
\newblock {\em Multiscale Model. Simul.}, 4:447--459, 2005.

\bibitem{BB+08dune2}
P.~Bastian, M.~Blatt, A.~Dedner, C.~Engwer, R.~Kl{\"o}fkorn, R.~Kornhuber,
  M.~Ohlberger, and O.~Sander.
\newblock A generic grid interface for parallel and adaptive scientific
  computing. {II}. {I}mplementation and tests in {DUNE}.
\newblock {\em Computing}, 82(2-3):121--138, 2008.

\bibitem{BB+08dune1}
P.~Bastian, M.~Blatt, A.~Dedner, C.~Engwer, R.~Kl{\"o}fkorn, M.~Ohlberger, and
  O.~Sander.
\newblock A generic grid interface for parallel and adaptive scientific
  computing. {I}. {A}bstract framework.
\newblock {\em Computing}, 82(2-3):103--119, 2008.

\bibitem{BGL13regularitymaxwell}
A.~Bonito, J.-L. Guermond, and F.~Luddens.
\newblock Regularity of the {M}axwell equations in heterogeneous media and
  {L}ipschitz domains.
\newblock {\em J. Math. Anal. Appl.}, 408(2):498--512, 2013.

\bibitem{BB12homwire}
G.~Bouchitt\'e and C.~Bourel.
\newblock Multiscale nanorod metamaterials and realizable permittivity tensors.
\newblock {\em Commun. Comput. Phys.}, 11(2):489--507, 2012.

\bibitem{BBF09hommaxwell}
G.~Bouchitt{\'e}, C.~Bourel, and D.~Felbacq.
\newblock Homogenization of the 3{D} {M}axwell system near resonances and
  artificial magnetism.
\newblock {\em C. R. Math. Acad. Sci. Paris}, 347(9--10):571--576, 2009.

\bibitem{BBF15hommaxwell}
G.~Bouchitt\'e, C.~Bourel, and D.~Felbacq.
\newblock Homogenization near resonances and artificial magnetism in three
  dimensional dielectric metamaterials.
\newblock {\em Arch. Ration. Mech. Anal.}, 225(3):1233--1277, 2017.

\bibitem{BF04homhelmholtz}
G.~Bouchitt{\'e} and D.~Felbacq.
\newblock Homogenization near resonances and artificial magnetism from
  dielectrics.
\newblock {\em C. R. Math. Acad. Sci. Paris}, 339(5):377--382, 2004.

\bibitem{BF-thinwires}
G.~Bouchitt\'e and D.~Felbacq.
\newblock Homogenization of a wire photonic crystal: the case of small volume
  fraction.
\newblock {\em SIAM J. Appl. Math.}, 66(6):2061--2084, 2006.

\bibitem{BS10splitring}
G.~Bouchitt{\'e} and B.~Schweizer.
\newblock Homogenization of {M}axwell's equations in a split ring geometry.
\newblock {\em Multiscale Model. Simul.}, 8(3):717--750, 2010.

\bibitem{BS13plasmonwaves}
G.~Bouchitt{\'e} and B.~Schweizer.
\newblock Plasmonic waves allow perfect transmission through sub-wavelength
  metallic gratings.
\newblock {\em Netw. Heterog. Media}, 8(4):857--878, 2013.

\bibitem{CZAL10multiscalemaxwell}
L.~Cao, Y.~Zhang, W.~Allegretto, and Y.~Lin.
\newblock Multiscale asymptotic method for {M}axwell's equations in composite
  materials.
\newblock {\em SIAM J. Numer. Anal.}, 47(6):4257--4289, 2010.

\bibitem{CC15hommaxwell}
K.~Cherednichenko and S.~Cooper.
\newblock Homogenization of the system of high-contrast {M}axwell equations.
\newblock {\em Mathematika}, 61(2):475--500, 2015.

\bibitem{CH17femmaxwellmultiscale}
V.~T. Chu and V.~H. Hoang.
\newblock High-dimensional finite elements for multiscale maxwell-type
  equations.
\newblock {\em IMA Journal of Numerical Analysis}, 38(1):227--270, 2018.

\bibitem{CL19gmsfem}
E.~T. Chung and Y.~Li.
\newblock Adaptive generalized multiscale finite element methods for
  {H}(curl)-elliptic problems with heterogeneous coefficients.
\newblock {\em J. Comput. Appl. Math.}, 345:357--373, 2019.

\bibitem{CFS17hmmmaxwell}
P.~Ciarlet~Jr., S.~Fliss, and C.~Stohrer.
\newblock On the approximation of electromagnetic fields by edge finite
  elements. {P}art 2: {A} heterogeneous multiscale method for {M}axwell's
  equations.
\newblock {\em Comput. Math. Appl.}, 73(9):1900--1919, 2017.

\bibitem{CD00maxwellsingularities}
M.~Costabel and M.~Dauge.
\newblock Singularities of electromagnetic fields in polyhedral domains.
\newblock {\em Arch. Ration. Mech. Anal.}, 151:221--276, 2000.

\bibitem{CDN99maxwellinterface}
M.~Costabel, M.~Dauge, and S.~Nicaise.
\newblock Singularities of {M}axwell interface problems.
\newblock {\em M2AN Math. Model. Numer. Anal.}, 33(3):627--649, 1999.

\bibitem{EE03hmm}
W.~E and B.~Engquist.
\newblock The heterogeneous multiscale methods.
\newblock {\em Commun. Math. Sci.}, 1(1):87--132, 2003.

\bibitem{EE05hmm}
W.~E and B.~Engquist.
\newblock The heterogeneous multi-scale method for homogenization problems.
\newblock In {\em Multiscale methods in science and engineering}, volume~44 of
  {\em Lect. Notes Comput. Sci. Eng.}, pages 89--110. Springer, Berlin, 2005.

\bibitem{EMZ05hmmanalysis}
W.~E, P.~Ming, and P.~Zhang.
\newblock Analysis of the heterogeneous multiscale method for elliptic
  homogenization problems.
\newblock {\em J. Amer. Math. Soc.}, 18:121--156, 2005.

\bibitem{EP04negphC}
A.~Efros and A.~Pokrovsky.
\newblock Dielectroc photonic crystal as medium with negative electric
  permittivity and magnetic permeability.
\newblock {\em Solid State Communications}, 129(10):643--647, 2004.

\bibitem{BF97homfibres}
D.~Felbacq and G.~Bouchitt{\'e}.
\newblock Homogenization of a set of parallel fibres.
\newblock {\em Waves Random Media}, 7(2):245--256, 1997.

\bibitem{GHV17lodcurl}
D.~Gallistl, P.~Henning, and B.~Verf\"urth.
\newblock Numerical homogenization of {H(curl)}-problems.
\newblock {\em SIAM J.~Numer.~Anal.}, 56(3):1570--1596, 2018.

\bibitem{HOV15maxwellHMM}
P.~{Henning}, M.~{Ohlberger}, and B.~{Verf{\"u}rth}.
\newblock A new {H}eterogeneous {M}ultiscale {M}ethod for time-harmonic
  {M}axwell's equations.
\newblock {\em SIAM J. Numer. Anal.}, 54(6):3493--3522, 2016.

\bibitem{HS16hmmmaxwelltime}
M.~Hochbruck and C.~Stohrer.
\newblock Finite element heterogeneous multiscale method for time-dependent
  {M}axwell's equations.
\newblock In M.~Bittencourt, N.~Dumont, and J.~Hesthaven, editors, {\em
  Spectral and High Order Methods for Partial Differential Equations ICOSAHOM
  2016}, volume 119 of {\em Lect. Notes Comput. Sci. Eng.}, pages 269--281.
  Springer, Cham, 2017.

\bibitem{LS15negindex}
A.~{Lamacz} and B.~{Schweizer}.
\newblock A negative index meta-material for {M}axwell's equations.
\newblock {\em SIAM J. Math. Anal.}, 48(6):4155--4174, 2016.

\bibitem{LS16pecrings}
R.~Lipton and B.~Schweizer.
\newblock Effective {M}axwell's equations for perfectly conducting split ring
  resonators.
\newblock {\em Arch. Ration. Mech. Anal.}, 229(3):1197--1221, 2018.

\bibitem{CJJP02negrefraction}
C.~Luo, S.~G. Johnson, J.~Joannopolous, and J.~Pendry.
\newblock All-angle negative refraction without negative effective index.
\newblock {\em Phys. Rev. B}, 65(2001104), 2002.

\bibitem{wwwdunegdt}
R.~Milk and F.~Schindler.
\newblock dune-gdt, 2015.
\newblock \url{dx.doi.org/10.5281/zenodo.35389}.

\bibitem{Monk}
P.~Monk.
\newblock {\em Finite element methods for {M}axwell's equations}.
\newblock Numerical Mathematics and Scientific Computation. Oxford University
  Press, New York, 2003.

\bibitem{Ohl05HMM}
M.~Ohlberger.
\newblock A posteriori error estimates for the heterogeneous multiscale finite
  element method for elliptic homogenization problems.
\newblock {\em Multiscale Model. Simul.}, 4(1):88--114 (electronic), 2005.

\bibitem{OV16hmmhelmholtz}
M.~Ohlberger and B.~Verf{\"u}rth.
\newblock A new {H}eterogeneous {M}ultiscale {M}ethod for the {H}elmholtz
  equation with high contrast.
\newblock {\em Mulitscale Model. Simul.}, 16(1):385--411, 2018.

\bibitem{PE03lefthanded}
A.~Pokrovsky and A.~Efros.
\newblock Diffraction theory and focusing of light by a slab of left-handed
  material.
\newblock {\em Physica B: Condensed Matter}, 338(1-4):333--337, 2003.
\newblock Proceedings of the Sixth International Conference on Electrical
  Transport and Optical Properties of Inhomogeneous Media.

\bibitem{Schw17metamaterial}
B.~Schweizer.
\newblock Resonance meets homogenization: construction of meta-materials with
  astonishing properties.
\newblock {\em Jahresber. Dtsch. Math.-Ver.}, 119(1):31--51, 2017.

\bibitem{SU17hommaxwell}
B.~Schweizer and M.~Urban.
\newblock {E}ffective {M}axwell's equations in general periodic
  microstructures.
\newblock {\em Applicable Analysis}, 97(13):2210--2230, 2017.

\bibitem{Ver17hmmmaxwell}
B.~Verf\"urth.
\newblock {H}eterogeneous {M}ultiscale {M}ethod for the {M}axwell equations
  with high contrast.
\newblock arXiv preprint, 2017.

\end{thebibliography}

\end{document}